\def\cyr{%
\renewcommand\rmdefault{wncyr}%
\renewcommand\sfdefault{wncyss}%
\renewcommand\encodingdefault{OT2}%
\normalfont
\selectfont}
\DeclareTextFontCommand{\textcyr}{\cyr}
\newcommand{\rad}{\text{\cyr   ya}}
\def\Bbb{\mathbb}
\def\CC{\mathbb C}
\def\Riem{{\mathcal R}}
\newtheorem{main}{Theorem}
\DeclareMathOperator{\Hess}{Hess}
\DeclareMathOperator{\Aut}{Aut}
\DeclareMathOperator{\Int}{Int}
\DeclareMathOperator{\area}{area}
\newtheorem{thm}{Theorem}
\newtheorem{lem}[thm]{Lemma}
\newtheorem{prop}[thm]{Proposition}
\newtheorem{cor}[thm]{Corollary}
\newenvironment{proof}{\medskip \noindent
{\bf Proof.}}{\hfill \rule{.5em}{1em}
\\}
\def\ZZ{{\mathbb Z}}
\def\RR{{\mathbb R}}
\def\CP{{\mathbb C \mathbb P}}
\begin{document}

\title{On  Conformally K\"ahler, Einstein Manifolds}

\author{Xiuxiong Chen\thanks{Supported 
in part by  NSF grant DMS-0406346.}, 
Claude LeBrun\thanks{Supported 
in part by  NSF grant DMS-0604735.},    and Brian Weber 
  }


\date{April 27, 2007}
\maketitle

\begin{abstract}
We prove that any compact complex surface with $c_1>0$ admits an
Einstein metric which is conformally related to a K\"ahler metric. 
The key
new ingredient is the existence of such a metric on the 
blow-up  ${\CP}_2\# 2\overline{\CP}_2$
of the complex projective plane at two distinct points. 
\end{abstract}

\section{Introduction}

Recall that a Riemannian manifold $(M,h)$ is said to be {\em Einstein} if its 
Ricci tensor $r$ satisfies 
$$r=\lambda h$$
for some real number $\lambda$. If, on the other hand,  $M$ is equipped with an 
integrable almost-complex structure $J$, so that $(M,J)$ is a complex manifold, 
then we say that a Riemannian metric $h$ is {\em Hermitian} with respect to $J$ 
if $h= h(J\cdot , J \cdot)$.
The purpose of this article is to prove the following: 

\begin{main}\label{big}
If  $(M,J)$ is the compact complex surface 
obtained from $\CP_2$ by blowing up 
two distinct points, then $(M,J)$ admits an Einstein metric $h$ of positive Ricci curvature 
which is 
Hermitian with respect to $J$. 
\end{main}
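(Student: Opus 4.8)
The plan is to construct an Einstein metric conformal to a Kähler metric on $M = \CP_2 \# 2\overline{\CP}_2$. The key insight, going back to work on conformally Kähler Einstein metrics, is that if $(M,J,g)$ is Kähler with Kähler form $\omega$, then $h = s^{-2} g$ (where $s$ denotes the scalar curvature of $g$) is Einstein precisely when $g$ is an \emph{extremal} Kähler metric whose scalar curvature $s$ is nowhere zero and satisfies a specific additional curvature constraint. More precisely, I would exploit the fact that such a conformally Einstein condition reduces, on a complex surface, to requiring $g$ to be a critical point of a suitable functional; the natural candidate is the Calabi-type functional whose critical points are extremal Kähler metrics. So the first step is to reformulate the problem: rather than search directly for the Einstein metric $h$, I would search for an extremal Kähler metric $g$ on $M$ within a suitable Kähler class, and then set $h = s^{-2}g$ after verifying $s>0$.

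\medskip
The main strategy is therefore variational. I would introduce a modified functional on the space of Kähler metrics—essentially a weighted or normalized version of the Calabi energy $\int_M s^2\, d\mu$—whose critical points give exactly the conformally Einstein Kähler metrics, and then seek a minimizer. The steps, in order, are as follows. First, I would pin down the correct Kähler class: by the symmetry of $M$ (which carries a large automorphism group, since blowing up two points of $\CP_2$ leaves a torus action and additional discrete symmetries), the Einstein metric, if it exists, should be invariant under a maximal compact subgroup of $\Aut(M,J)$, so I would restrict attention to the finite-dimensional space of invariant Kähler classes and invariant metrics. Second, within this reduced problem I would attempt to solve the extremal Kähler equation together with the conformal-Einstein constraint by a continuity or direct minimization argument, obtaining a priori estimates that prevent the metric from degenerating. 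Third, having produced the extremal Kähler $g$, I would check positivity of the scalar curvature $s$ and verify that the additional algebraic curvature identity holds so that $h=s^{-2}g$ is genuinely Einstein; the positivity of the Ricci curvature of $h$ should then follow from $c_1(M)>0$.

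\medskip
The hardest part will be establishing existence of the extremal Kähler metric together with the required curvature constraint, and simultaneously ensuring $s>0$ everywhere. The obstruction is twofold: analytically, one must control the behavior of the metric along any minimizing sequence and rule out bubbling or collapse, which typically requires sharp integral estimates tied to the topology of $M$ and its Kähler class; geometrically, the extremal metric one finds must satisfy the extra constraint forcing $s^{-2}g$ to be Einstein, which is a nontrivial overdetermination that is not automatic for a general extremal metric. I expect that exploiting the symmetry to reduce to an ODE or low-dimensional PDE system will be essential, reducing the constraint to a solvable boundary-value problem on an interval parametrizing the momentum coordinate of the torus action. The positivity of $s$ and the identification of the correct Kähler class will likely be forced by a careful analysis of this reduced system rather than by soft arguments.
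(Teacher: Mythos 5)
Your high-level frame coincides with the paper's: reduce Theorem A to finding an extremal K\"ahler metric $g$ whose K\"ahler class is a critical point of the Futaki-corrected Calabi energy ${\mathcal A}$ on the K\"ahler cone (such a $g$ is Bach-flat, and $h=s^{-2}g$ is then Einstein wherever $s\neq 0$); restrict to metrics and classes invariant under the maximal compact symmetry; and verify $s>0$ at the end. The paper makes the ``correct class'' step concrete in a way you leave implicit: symmetry alone does not pin down the class, since there is a one-parameter family of bilaterally symmetric classes $(1+x)(F_1+F_2)-xE$; the paper computes ${\mathcal A}$ on this family as an explicit rational function $f(x)$ via Futaki invariants and symplectic reduction, and takes its first critical point $x_0$, also checking that bilaterally symmetric extremal metrics automatically have $s>0$.

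The genuine gap is in the existence step, which is the entire difficulty, and your one concrete idea there would fail. The maximal compact subgroup of $\Aut_0(M,J)$ for the two-point blow-up is only a $2$-torus, so invariant metrics on $\CP_2\# 2\overline{\CP}_2$ have cohomogeneity two: the symmetry reduction is Abreu's fourth-order PDE on a two-dimensional moment polytope (a pentagon), not ``a solvable boundary-value problem on an interval parametrizing the momentum coordinate.'' An interval/ODE reduction is exactly what is available for the Page metric on $\CP_2\#\overline{\CP}_2$, which is cohomogeneity one; its failure here is why this theorem is hard, and indeed the metric produced by the paper is not known in closed form. The paper instead proves existence by a continuity method in the parameter $x$: the set of $x$ whose class admits an extremal metric is nonempty (Arezzo--Pacard--Singer gluing near $x=0$), open (LeBrun--Simanca), and---this is the core of the paper---closed in $(0,x_0]$, via the Chen--Weber orbifold compactness theorem. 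Making that compactness usable requires a uniform Sobolev constant bound, deduced from a uniform Yamabe lower bound, which in turn needs the explicit inequality ${\mathcal A}<8<\tfrac{3}{2}c_1^2$ on the relevant classes together with uniform scalar curvature bounds; and one must then exclude bubbling outright: any deepest bubble would be a toric, ALE, scalar-flat K\"ahler surface with negative intersection form, $b_2\le 2$, and nontrivial group at infinity, and each of the finitely many resulting cases is eliminated by combining Gauss--Bonnet/signature curvature quantization on ALE spaces, the bilateral $\ZZ_2$ symmetry (which either forces the bubble to be symmetric or doubles the concentrated energy), and the arithmetic of the intersection form of $M$ (the equations $-3=2m^2-n^2$ and $-2=2j^2-k^2$ admit no integer solutions compatible with the bubbled homology classes having small area). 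None of this is supplied by the phrase ``a priori estimates that prevent the metric from degenerating'': without such a mechanism, your continuity path or minimizing sequence can concentrate curvature and converge only to a singular orbifold, and nothing in the reduced toric picture rules that out.
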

The complex surface figuring in this result
 is diffeomorphic to $\CP_2\# 2\overline{\CP}_2$,
and can also be obtained by blowing up  $\CP_1\times \CP_1$ at one 
point. This manifold has $c_1(M,J)> 0$, in the sense that  its first Chern class
is  the K\"ahler class of a K\"ahler metric; thus, it is an example of a del 
Pezzo surface---i.e. a 
Fano manifold of complex dimension $2$. However, by a result of Matsushima
\cite{mats}, 
it cannot admit a 
K\"ahler-Einstein metric, because its automorphism group is non-reductive. 

Our strategy for proving Theorem \ref{big} was originally motivated 
by the work of Derdzi{\'n}ski \cite{bes,derd} on 
Einstein metrics which are {\em conformally}
 K\"ahler. By extending Derdzi{\'n}ski's results, 
the second author  has  shown elsewhere  \cite{leb4}  that if a Hermitian 
metric $h$  on a compact complex surface $(M^4,J)$ is  Einstein,
  then $h$ is necessarily conformal
to a K\"ahler metric $g$, and that, unless  $h$ is itself K\"ahler,  then 
\begin{itemize}
\item $(M,J)$ has
$c_1> 0$, and is obtained from $\CP_2$
by blowing up $1$, $2$, or $3$ points in general position;
\item $h$ has positive Ricci curvature; 
\item $g$ is an extremal K\"ahler metric in the sense of Calabi  \cite{calabix,calabix2};
\item the scalar curvature $s$ of $g$ is  everywhere positive; and
\item after appropriate normalization, $h=s^{-2}g$.
\end{itemize} 
These observations conversely 
motivate  the proof of Theorem \ref{big}, which proceeds by constructing an
extremal K\"ahler metric $g$ with the property that $h=s^{-2}g$ is Einstein. 
This is done by using a weak compactness result of the 
first and third authors to produce large deformations of certain
extremal K\"ahler metrics constructed by Arezzo, Pacard and Singer \cite{arpasing}.
For a concise summary of the proof, see \S \ref{stratego} below.

Now $\CP_2\# \overline{\CP}_2$ carries an Einstein metric 
originally discovered by Page  \cite{page}, and it was later pointed out 
\cite{bes} that 
the Page metric is actually   conformal
 to one of the extremal K\"ahler metrics 
constructed  by Calabi \cite{calabix}
on 
 the one-point blow-up $\CP_2$. 
As the 
K\"ahler-Einstein case has been completely solved by 
Aubin, Yau, and Tian \cite{aubin,yau,tian}, Theorem \ref{big}
 exactly fills in the missing puzzle-piece needed to prove  the following:

\begin{cor} \label{free} 
Let $(M^4,J)$ 
be a compact complex surface. Then $M$ admits
an   Einstein metric which is Hermitian with respect to $J$ if and only if 
$$c_1 (M,J)= \lambda [\omega ]$$ for
some real constant $\lambda$ and some K\"ahler class $[\omega] \in H^2 (M, \RR)$.
\end{cor}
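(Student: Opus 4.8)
The plan is to deduce Corollary \ref{free} by assembling Theorem \ref{big} with the previously known existence results, and by invoking the structural classification of Hermitian Einstein metrics on complex surfaces recalled in the introduction. Since the statement is an ``if and only if,'' I would treat the two directions separately.

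For the easy direction, suppose $(M^4,J)$ admits an Einstein metric $h$ that is Hermitian with respect to $J$. By the result of the second author cited above \cite{leb4}, either $h$ is itself K\"ahler, or else $h$ is conformal to a K\"ahler metric $g$ with $h=s^{-2}g$ after normalization. In the K\"ahler--Einstein case, the Einstein condition forces the Ricci form to be a constant multiple of the K\"ahler form $\omega$, so that $c_1(M,J)=\lambda[\omega]$ with $[\omega]$ a K\"ahler class, as desired. In the strictly conformal case, the same classification tells us that $c_1>0$, meaning precisely that $c_1(M,J)$ is itself representable as the K\"ahler class of some K\"ahler metric; taking $\lambda=1$ and that metric's class as $[\omega]$ gives the claimed proportionality. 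Thus in either case the cohomological condition $c_1(M,J)=\lambda[\omega]$ holds.

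For the converse, assume $c_1(M,J)=\lambda[\omega]$ for some real $\lambda$ and some K\"ahler class $[\omega]$. I would first dispose of the degenerate possibilities by the sign of $\lambda$: if $\lambda<0$ the surface is of general type with ample canonical bundle, and if $\lambda=0$ it has torsion first Chern class, so in both regimes the theorems of Aubin and Yau \cite{aubin,yau} produce a K\"ahler--Einstein metric, which is Hermitian with respect to $J$. The remaining case is $\lambda>0$, i.e. $c_1(M,J)>0$, so $(M,J)$ is a del Pezzo surface. Here I would run through the classification of del Pezzo surfaces, which are exactly $\CP_1\times\CP_1$, $\CP_2$, and the blow-ups of $\CP_2$ at $k$ points in general position for $1\le k\le 8$. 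For all of these except the one-point and two-point blow-ups of $\CP_2$, the automorphism group is reductive, so by the Aubin--Yau--Tian solution of the K\"ahler--Einstein problem on Fano surfaces \cite{aubin,yau,tian} a K\"ahler--Einstein, hence Hermitian Einstein, metric exists. The one-point blow-up $\CP_2\#\overline{\CP}_2$ is handled by the Page metric \cite{page}, which is Hermitian Einstein. Finally, the two-point blow-up $\CP_2\#2\overline{\CP}_2$ is covered precisely by Theorem \ref{big}. This exhausts all cases, completing the converse.

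The main obstacle is verifying that the case analysis in the converse is genuinely exhaustive and that the cited existence results apply to each piece: I must correctly invoke the classification of del Pezzo surfaces, identify exactly which of them fail Matsushima's reductivity criterion \cite{mats} (namely the $k=1,2$ blow-ups, whose automorphism groups are non-reductive), and check that Tian's existence theorem covers all the remaining Fano surfaces with reductive automorphism groups. The interlocking of the four input results---Aubin--Yau for $\lambda\le 0$, Tian for the reductive Fano cases, the Page metric for $k=1$, and Theorem \ref{big} for $k=2$---is what makes the corollary follow, and the care lies in ensuring no del Pezzo surface slips through the partition.
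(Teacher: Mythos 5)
Your proposal is correct and follows essentially the same route the paper intends: the ``only if'' direction via the classification of Hermitian Einstein metrics in \cite{leb4}, and the ``if'' direction by splitting on the sign of $\lambda$ and assembling Aubin--Yau for $\lambda\le 0$, Tian's theorem for the reductive Fano cases, the Page metric for $\CP_2\#\overline{\CP}_2$, and Theorem \ref{big} for $\CP_2\#2\overline{\CP}_2$. The paper never writes this argument out explicitly---it simply states that Theorem \ref{big} supplies the missing puzzle-piece---so your write-up is a faithful and complete elaboration of the intended proof.
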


Theorem \ref{big} also completes the solution of a slightly different problem. 
Let us instead focus on the underlying $4$-manifold $M$ of  a compact
complex surface, and, without supposing anything about the 
relationship between the metric and complex structure, ask when this 
smooth manifold 
admits an Einstein metric with positive $\lambda$. By 
 the Hitchin-Thorpe inequality \cite{bes,hit,tho}, the existence of such a metric  implies that 
$M$  has $c_1^2 = 2\chi + 3\tau > 0$. 
However, the latter ensures \cite{FM,spccs} 
that the Seiberg-Witten invariant \cite{witten} is well-defined, and the existence of 
a positive-scalar-curvature metric then forces the invariant to vanish. 
But since $c_1^2 > 0$, the Kodaira classification  \cite{bpv} says that the complex
surface $M$
is either rational or of general type. Since the Seiberg-Witten invariant of $M$ would
be non-zero if it were of general type, we therefore conclude  
that $M$ can  obtained from either 
$\CP_2$ or a rational ruled surface by blowing up; and since $c_1^2 > 0$, we thus 
conclude that 
$M$ is diffeomorphic to either ${\CP}_2\# k\overline{\CP}_2$,  $0\leq k \leq 8$, or 
to $S^2 \times S^2$. 
Similarly, one can reach this same conclusion if  the assumption 
that $M$ admits a complex structure is replaced with the hypothesis that it admits a 
symplectic form  \cite{liu1,ohno}. 
In conjunction with the 
results  of Tian-Yau \cite{ty}, Theorem \ref{big} therefore implies the following:

\begin{cor}
Let $M$ be a smooth compact oriented $4$-manifold which either 
 admits 
 a complex structure or    admits a symplectic structure. 
Then $M$ carries an Einstein metric of positive
scalar curvature if and only if  it 
is orientedly diffeomorphic   
 to  either  a connected  sum 
${\CP}_2\# k\overline{\CP}_2$, where  $0\leq k \leq 8$, or else 
to $S^2 \times S^2$. 
\end{cor}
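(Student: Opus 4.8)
The plan is to establish the equivalence by proving its two implications separately, noting at the outset that the ``only if'' direction is essentially the diffeomorphism restriction already derived in the discussion preceding the statement, while the ``if'' direction is where one must actually exhibit metrics. I would therefore split the argument into: (i) showing that an Einstein metric of positive scalar curvature on such an $M$ forces $M$ to be orientedly diffeomorphic to one of the listed manifolds, and (ii) producing, on each listed manifold, an Einstein metric of positive scalar curvature.

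For step (i) I would reproduce the reasoning sketched above. An Einstein metric with $\lambda>0$ has positive scalar curvature, and by the Hitchin--Thorpe inequality its existence forces $c_1^2 = 2\chi + 3\tau > 0$ (the inequality being strict because $\lambda > 0$ rules out the Ricci-flat boundary case). This positivity makes the Seiberg--Witten invariant well-defined, and positive scalar curvature forces that invariant to vanish. When $M$ carries a complex structure, the Kodaira classification together with $c_1^2 > 0$ leaves only the rational or general-type cases; since a surface of general type would have nonzero Seiberg--Witten invariant, $M$ must be rational, and $c_1^2 > 0$ then pins it down, up to orientation-preserving diffeomorphism, to $\CP_2 \# k\overline{\CP}_2$ with $0 \le k \le 8$ or to $S^2 \times S^2$. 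When $M$ instead carries a symplectic structure, I would invoke the analogous results of Liu and Ohno to reach the same list. Throughout I would keep careful track of orientations, since the conclusion is stated up to orientation-preserving diffeomorphism.

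For step (ii) I would assemble the required metrics case by case. On $S^2 \times S^2$ the product of two equal round spheres is Einstein with positive scalar curvature, and on $\CP_2$ (the case $k=0$) the Fubini--Study metric serves. On $\CP_2 \# \overline{\CP}_2$ the Page metric is Einstein with positive scalar curvature. The crucial case $\CP_2 \# 2\overline{\CP}_2$ is exactly the content of Theorem \ref{big}, which supplies an Einstein metric of positive Ricci---hence positive scalar---curvature. Finally, for $3 \le k \le 8$ the manifold $\CP_2 \# k\overline{\CP}_2$ is diffeomorphic to a del Pezzo surface obtained by blowing up points in general position, and these admit K\"ahler--Einstein metrics with $\lambda > 0$ by the theorem of Tian--Yau. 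Since every diffeomorphism type on the list thereby carries an Einstein metric of positive scalar curvature, the ``if'' direction follows, completing the proof.

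The genuine obstacle is not located in this corollary at all, but in the one input it cannot avoid: the existence statement on $\CP_2 \# 2\overline{\CP}_2$ provided by Theorem \ref{big}. The remaining sufficiency cases are citations of classical or previously established metrics (Fubini--Study, product, Page, Tian--Yau), and the necessity direction is a known smooth classification; the entire difficulty is thus concentrated in Theorem \ref{big}, whose proof constitutes the body of the paper. A minor point I would double-check is that the Tian--Yau existence result genuinely covers each of $k = 3, \dots, 8$---equivalently, that the corresponding del Pezzo surfaces have reductive automorphism groups, so that Matsushima's obstruction does not intervene---ensuring that no gap is left between the $k=2$ case handled by Theorem \ref{big} and the K\"ahler--Einstein range.
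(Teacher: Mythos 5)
Your proposal is correct and follows essentially the same route as the paper: the necessity direction via Hitchin--Thorpe, the well-definedness and vanishing of the Seiberg--Witten invariant, the Kodaira classification (with Liu and Ohno handling the symplectic case), and the sufficiency direction by combining the classical metrics on $S^2\times S^2$ and $\CP_2$, the Page metric on $\CP_2\#\overline{\CP}_2$, Theorem \ref{big} for $\CP_2\#2\overline{\CP}_2$, and the Tian--Yau K\"ahler--Einstein metrics for $3\leq k\leq 8$. Your closing observation that the entire difficulty is concentrated in Theorem \ref{big} is exactly the paper's point of view.
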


\section{Strategy}
 \label{stratego} 

We now  outline the  proof of Theorem \ref{big}. 

Arezzo, Pacard, and Singer \cite{arpasing} have shown that 
if $\CP_1 \times \CP_1$ is blown up at a point, the resulting
complex surface $M$
\begin{center}
\begin{picture}(240,80)(0,3)
\put(-7,70){\line(1,0){54}}
\put(40,75){\line(2,-3){28}}
\put(0,75){\line(-2,-3){28}}
\put(68,44){\line(-1,-1){52}}
\put(-28,44){\line(1,-1){52}}
\put(20,77){\makebox(0,0){$E$}}
\put(-21,13){\makebox(0,0){$F_1$}}
\put(60,13){\makebox(0,0){$F_2$}}
\put(100,40){\vector(1,0){50}}
\put(210,0){\line(1,1){45}}
\put(220,0){\line(-1,1){45}}
\put(210,80){\line(1,-1){45}}
\put(220,80){\line(-1,-1){45}}
\put(215,75){\circle*{4}}
\end{picture}
\end{center}
 admits extremal K\"ahler metrics; in particular, 
their work  shows that such metrics can be found
in the K\"ahler classes $F_1+F_2 -\epsilon E$ for 
any sufficiently small $\epsilon > 0$, where $E$ is the
Poincar\'e dual of the  exceptional divisor introduced
by blowing up, and where $F_1$ and $F_2$ are the Poincar\'e duals 
of the factor $\CP_1$'s of  $\CP_1 \times \CP_1$. 
We note in passing that  the homology classes
$F_1-E$ and $F_2-E$ are also represented by $(-1)$-curves,
and that blowing these two exceptional divisors down 
\begin{center}
\begin{picture}(240,80)(0,3)
\put(-7,70){\line(1,0){54}}
\put(40,75){\line(2,-3){28}}
\put(0,75){\line(-2,-3){28}}
\put(68,44){\line(-1,-1){52}}
\put(-28,44){\line(1,-1){52}}
\put(20,77){\makebox(0,0){$E$}}
\put(-21,13){\makebox(0,0){$F_1$}}
\put(60,13){\makebox(0,0){$F_2$}}
\put(100,40){\vector(1,0){50}}
\put(210,0){\line(2,3){52}}
\put(220,0){\line(-2,3){52}}
\put(165,70){\line(1,0){100}}
\put(172.6,70.5){\circle*{4}}
\put(257.4,70.5){\circle*{4}}
\end{picture}
\end{center}
results
in  $\CP_2$; thus $M$ may also  be described 
as $\CP_2\# 2\overline{\CP}_2$. We also note that the K\"ahler
classes we are choosing to study are ones for which 
$F_1$ and $F_2$ have equal areas, even though the Arezzo-Pacard-Singer 
result would also construct extremal K\"ahler metrics for which
the ratios of these areas is quite  arbitrary. By the uniqueness
of extremal K\"ahler metrics in a given K\"ahler class \cite{xxgang},
the metrics we are considering therefore not only have an isometric 
$U(1)\times U(1)$-action, but also  admit an additional isometric
$\ZZ_2$ action which interchanges $F_1$ and $F_2$.  
This leads to major technical simplifications
which will play a crucial r\^ole in our proof. 
We thus introduce the  term {\em bilaterally symmetric} to 
describe both those  K\"ahler classes which are invariant under the
interchange $F_1\leftrightarrow F_2$, as well as the  extremal K\"ahler metrics
we will find in many such classes. 

By a  general result  \cite{ls2} proved via the 
inverse-function theorem, the  {\em extremal cone},
consisting of the K\"ahler classes of all extremal K\"ahler metrics
on $(M,J)$ 
is automatically  open in $H^{1,1}(M, \RR )$; consequently,  
the set of $\epsilon$ for
which the relevant K\"ahler class contains an extremal K\"ahler metric is open.
As we increase $\epsilon$, we can then use the Futaki invariant to  show that the 
value of the Calabi functional 
$$
{\mathcal C}(g) = \int_Ms^2_gd\mu_g
$$
on these extremal metrics 
initially decreases, but would eventually reach a minimum 
and then increase if we could simply take $\epsilon$ to be sufficiently large.  
If we can simply arrange for $\epsilon$ to achieve a value which extremizes ${\mathcal C}(g)$,
we  show in \S \ref{fugue} that the corresponding extremal metric
will then  actually be conformally Einstein.

Thus, the problem essentially boils down to showing that, within a certain range, 
the set of $\epsilon$ achieved by extremal K\"ahler metrics is 
actually closed as well as open. Our method of showing this is based on an 
orbifold compactness result proved elsewhere by the first and third authors \cite{chenweb}.
In order to apply this, we must first prove a uniform estimate for 
the Sobolev constant of the metrics involved; this is done in \S \ref{sobstory}. 
Next, we must show that orbifold singularities cannot form in the limit.
This is done by showing that curvature can never concentrate in too small
a region, since, upon rescaling, this would result in an asymptotically
locally flat manifold which, given the topological and symmetry
conditions imposed by our situation,  would ultimately require the
concentration of more curvature than is actually available.  

\section{The Calabi Functional}
 \label{action} 

If $(M,g,J)$ is an extremal K\"ahler metric on a compact complex surface, 
the Calabi functional takes the value
$${\mathcal C}(g) = s_0^2\int  d\mu + \int (s-s_0)^2 d\mu=
32\pi^2\frac{(c_1\cdot [\omega ])^2}{[\omega ]^2}
-{\cal F}(\xi, [\omega ])
$$
where $s_0$ is the average value of the 
scalar curvature,
$\cal F$ denotes the Futaki invariant, and
$\xi = \mbox{grad}^{1,0} s$ is the extremal
vector field of the class $[\omega ]$. 
It is crucial for our purposes that $\xi$ 
may be determined \cite{fuma1}  up to conjugation
even  without knowing that an  extremal metric 
exists. Thus, one may define a functional 
$$
{\mathcal A} ([\omega ]) = \frac{(c_1\cdot [\omega ])^2}{[\omega ]^2}- \frac{1}{32\pi^2} 
{\mathcal F}(\xi, [\omega ])
$$
on the entire K\"ahler cone, independent of the existence of 
extremal K\"ahler metrics. This functional has the important
property \cite{xxel}
that any K\"ahler metric $g$ in the K\"ahler class $[\omega]$ satisfies
the curvature inequality 
$$\frac{1}{32\pi^2}\int s^2 d\mu \geq {\mathcal A}([\omega ])$$
with equality iff $g$ is an extremal metric. Notice that our
normalization has been chosen  so that we automatically have
$${\mathcal A}([\omega ])\geq c_1^2 (M)$$
for any K\"ahler class. This section will now begin with a discussion of  
the problem, first explored in \cite{spccs}, of  finding a critical point of  ${\mathcal A}$,
considered  as 
a function on the K\"ahler cone. To do  this, we will use computations of the 
Futaki invariant first given in  \cite{ls} for the blow-up
of ${\Bbb CP}_2$ at $\leq 3$ points  in general position.

\def\a{\alpha}
\def\b{\beta}
\def\d{\varepsilon}
Any extremal K\"ahler metric is invariant \cite{calabix2} under a maximal compact subgroup
of the identity component of the complex automorphism group, and 
since such subgroups are unique up to 
conjugation, we may simply choose one; in the present case, this means
that we may consider only metrics which are invariant under the $2$-torus
$T^2$ of automorphisms of $M$ induced by 
$$
([u_1:u_2], [v_1:v_2]) \mapsto ([u_1:e^{i\theta} u_2], [v_1:e^{i\phi} v_2]) ~,
$$
where $M$ is thought of as the blow-up of $\CP_1 \times \CP_1$ at $([0:1], [0:1])$. 
We also choose  only  to consider  {\em bilaterally symmetric} K\"ahler classes 
$[\omega ] =  (\beta +\varepsilon) (F_1 + F_2) - \varepsilon E$ on 
 $M= \CP_2 \# 2\overline{\CP}_2$: 
\begin{center}
\begin{picture}(100,90)(0,-10)
\put(-7,70){\line(1,0){54}}
\put(40,75){\line(2,-3){28}}
\put(0,75){\line(-2,-3){28}}
\put(68,44){\line(-1,-1){52}}
\put(-28,44){\line(1,-1){52}}
\put(20,77){\makebox(0,0){$\d$}}
\put(-23,56){\makebox(0,0){$\beta$}}
\put(60,56){\makebox(0,0){$\beta$}}
\put(-21,13){\makebox(0,0){$\beta+\d$}}
\put(60,13){\makebox(0,0){$\beta+\d$}}
\end{picture}
\end{center}
Here  the term {\em bilaterally symmetric} is again used to indicate that the 
class in question is invariant under $F_1\leftrightarrow F_2$. 
The numbers  $\beta$ and $\varepsilon$ 
respectively represent the areas of the $(-1)$-curves 
$F_1-E$ and $E$; 
 both are thus required to be positive, 
but they may otherwise be taken to be completely arbitrary. 
Since $\cal A$ is invariant under the 
${\Bbb Z}_2$-action $F_1\leftrightarrow F_2$ induced by 
interchanging the factors of $\CP_1\times \CP_1$, 
and is also invariant under rescaling $[\omega ] \rightsquigarrow a [\omega]$, 
any critical point of the function 
$$f(x) = {\mathcal A} ([1+x] (F_1+F_2) - xE)$$
will yield a critical point $(\beta , \varepsilon ) = (1, x)$,
and conversely, up to rescaling,  a critical point of ${\mathcal A}$ arises this way if
and only if the relevant K\"ahler class is bilaterally symmetric.  

Now, for any $T^2$-invariant,  
bilaterally symmetric K\"ahler metric, the real part of the extremal K\"ahler  vector field $\xi$
belongs to the Lie algebra of our maximal compact subgroup $T^2\subset \Aut_0(M)$, and 
must be invariant under $F_1\leftrightarrow F_2$. 
Thus $\xi$  must be a multiple of the generator 
 $\Xi$ 
of the ${\Bbb C}^\times$-action induced by the action 
$$
([u_1:u_2], [v_1:v_2]) \mapsto ([u_1:\zeta u_2], [v_1:\zeta v_2])
$$
on $\CP_1\times \CP_1$. 
But $\Xi = \mbox{grad}^{1,0}t$ for a real-valued 
Hamiltonian function $t$ which by symplectic reduction \cite{ls}
can be shown to satisfy 
\begin{eqnarray} [12\pi \omega ]^2 \int (t-t_0)^2 d\mu &= &
 12\b^6  +72\b^5\d 
+ 138\b^4\d^2  + 120\b^3\d^3
\nonumber
\\ &&\label{rhs} 
+ 54\b^2\d^4 + 12\b\d^5 +\d^6
\end{eqnarray}
where $t_0$ is the average value of $t$. 
On the other hand, it was shown in \cite{ls} that 
$$[\omega ]^2{\cal F}(\Xi, [\omega ])= 4\b\d 
\left [\frac{\d^2}{3}+
\b\d+
\b^2 \right] . $$
Since ${\cal F}(\Xi, [\omega ])=-\int (t-t_0) (s-s_0)d\mu$,
an explicit formula for ${\cal A}$ can now be deduced by 
setting $(s-s_0)= \lambda (t-t_0)$ and solving for
$\lambda$ to obtain
$$\lambda = -  \frac{(12\pi)^24\b\d [\d^2/3+ \b\d+\b^2]}{12\b^6  +72\b^5\d 
+ 138\b^4\d^2  + 120\b^3\d^3
+ 54\b^2\d^4 + 12\b\d^5 +\d^6}$$
so that 
\begin{eqnarray*}
-\frac{1}{32\pi^2}{\cal F}(\xi, [\omega ])&=& -\frac{1}{32\pi^2}\lambda {\cal F}(\Xi, [\omega ])\\ &=&
 \frac{9}{2[\omega]^2}
 \frac{ \Big(4\b\d [\d^2/3+ \b\d+\b^2]\Big)^2}{12\b^6  +72\b^5\d 
+ 138\b^4\d^2  + 120\b^3\d^3
+ 54\b^2\d^4 + 12\b\d^5 +\d^6}
\end{eqnarray*}
and
\begin{eqnarray*}
{\mathcal A}([\omega ])
 &=&\frac{(c_1\cdot [\omega ])^2}{[\omega]^2} -\frac{1}{32\pi^2}{\cal F}(\xi, [\omega ])
\\ &=& \frac{(4\b+3\d)^2}{[\omega]^2} -\frac{1}{32\pi^2}{\cal F}(\xi, [\omega ])
\end{eqnarray*}
Hence 
$${\mathcal A}([\omega ]) = f(x) = 3\left(
\frac{32+ 176x+318x^2+280x^3+132x^4+32x^5+3x^6}{
12+72x+138x^2+120x^3+54x^4+12x^5+x^6}\right)
$$
where $x=\d/\b$. For $x> 0$, technology\footnote{Figure generated by
Pacific Tech's Graphing Calculator program for Mac OS X.}  indicates that this has a
 unique critical point,
an absolute minimum, at $x\approx 0.958.$ 

\bigskip

\bigskip

\includegraphics{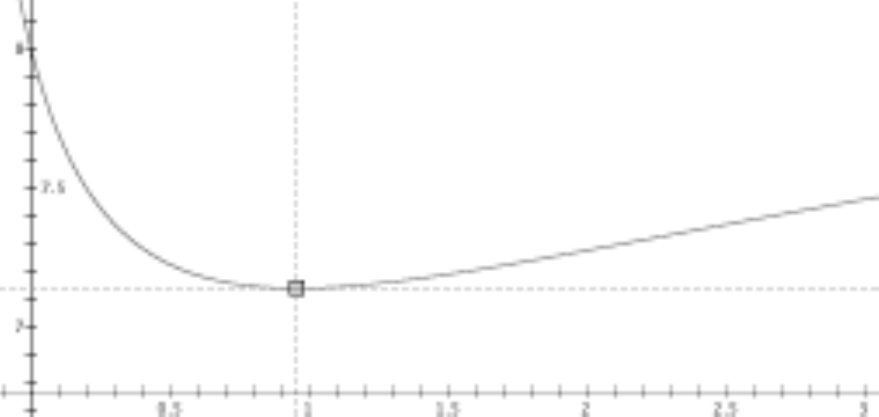}

\bigskip

\bigskip 

\noindent 
However, we will need a great deal less for the purposes of our proof:

\begin{lem} \label{magna} 
There is a number $x_0 >0$ such that the function 
$$f(x) = 
 3\left(
\frac{32+ 176x+318x^2+280x^3+132x^4+32x^5+3x^6}{
12+72x+138x^2+120x^3+54x^4+12x^5+x^6}\right)
$$
has a critical point at $x=x_0$, and such that 
$f(x) < 8$ on $(0,x_0]$. 
\end{lem}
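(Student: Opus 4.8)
The plan is to reduce the entire statement to the sign of a single auxiliary polynomial and then invoke the extreme value theorem, thereby avoiding any explicit computation of the critical point $x_0$ or of the derivative $f'$. First I would record two elementary facts. Writing $N$ and $D$ for the numerator and denominator polynomials of the bracketed fraction, the denominator $D(x) = 12+72x+138x^2+120x^3+54x^4+12x^5+x^6$ has strictly positive coefficients, so $D(x)>0$ for all $x\geq 0$; and $f(0) = 3\cdot 32/12 = 8$. The lemma is thus really a statement comparing $f$ with its boundary value $8 = c_1^2(\CP_1\times\CP_1)$.

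Next I would clear denominators in $f(x)-8$. A direct computation gives the identity
$3N(x)-8D(x) = x\,Q(x)$, where $Q(x) = x^5 - 36x^3 - 120x^2 - 150x - 48$.
Since $D>0$ on $[0,\infty)$, for $x>0$ the sign of $f(x)-8$ is exactly the sign of $Q(x)$. The coefficients of $Q$ exhibit the sign pattern $(+,0,-,-,-,-)$, so by Descartes' rule of signs $Q$ has exactly one positive root, say $r$; and since $Q(0)=-48<0$ while $Q(x)\to+\infty$, we conclude $Q<0$ on $(0,r)$ and $Q(r)=0$. Consequently $f<8$ throughout the open interval $(0,r)$, with $f(r) = f(0) = 8$.

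Finally I would produce the critical point by compactness. The function $f$ is smooth on the compact interval $[0,r]$, so it attains its minimum at some $x_0\in[0,r]$; because $f<8=f(0)=f(r)$ at interior points, this minimizer must lie in the open interval $(0,r)$, whence $f'(x_0)=0$. Since $(0,x_0]\subseteq(0,r)$, the inequality $f<8$ holds throughout $(0,x_0]$, which is precisely the assertion.

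The only genuinely computational steps are verifying the identity $3N-8D = xQ$ and checking the single sign change needed for Descartes' rule; both are mechanical, and I expect the former to be the one place where an arithmetic slip could creep in. The conceptual point that makes the argument robust is that the threshold $8$ is exactly the boundary value $f(0)$, so that both the strict inequality and the existence of the interior critical point follow from the factorization together with the observation that $f$ returns to the value $8$ only at the isolated root $r$. In particular, no estimate on the numerical location $x_0\approx 0.958$ nor on $f(x_0)$ is required.
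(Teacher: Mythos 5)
Your proposal is correct, but it takes a genuinely different route from the paper. I verified your central identity: $3N(x)-8D(x)=x\left(x^5-36x^3-120x^2-150x-48\right)$, so the factorization and the Descartes count (one sign change, hence exactly one positive root $r$ of $Q$) are both right, and the extreme-value argument producing an interior minimizer $x_0\in(0,r)$ with $f'(x_0)=0$ is sound. The paper instead works directly with the derivative: it computes $f'(0)=-4<0$, so that $f<8$ just to the right of $0$; observes $\lim_{x\to\infty}f(x)=9$, so $f'$ must become positive somewhere; and defines $x_0$ as the \emph{first} positive zero of $f'$, so that $f$ is decreasing on $[0,x_0]$ and hence $f<8$ on $(0,x_0]$. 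The paper's route is shorter---two one-line computations, no polynomial algebra, no appeal to Descartes. Your route buys strictly more information: since $D>0$ on $[0,\infty)$, the sign of $f-8$ on the positive axis equals the sign of $Q$, so $\{x>0:\ f(x)<8\}$ is exactly the interval $(0,r)$ and $f^{-1}(8)\cap\RR^+$ is the single point $r$. As a by-product this settles the claim the paper relegates to a footnote in the proof of Theorem \ref{freeze}, namely that $(0,L)$ is the \emph{only} connected component of $f^{-1}[(0,8)]\cap\RR^+$; your $r$ is precisely that $L$. One small remark: your $x_0$ (a global minimizer of $f$ on $[0,r]$) need not a priori coincide with the paper's $x_0$ (the first critical point of $f$), but the lemma only asks for some critical point such that $f<8$ on $(0,x_0]$, and both constructions deliver that.
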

\begin{proof}
Notice that $f(0)= 8$, and that $f^\prime (0)= -4 < 0$, 
so $f(x) < 8$ for small positive $x$. However, 
$\lim_{x\to \infty} f(x) = 9$, so $f^\prime(x)$ must be positive somewhere. 
We can therefore simply define
 $x_0$ be the first positive number at which $f^\prime (x)=0$, since 
 it then follows that $f$ is decreasing on $[0,x_0]$.
  \end{proof}

We leave it as an exercise for the interested reader to check 
that $f^\prime(1)
> 0$,
so that in fact $x_0 < 1$. 
However, it turns out that we will never actually need this sort of information
for the purposes of our proofs.

 We would also like  to know if there are  values of $x= \d/\b$ for
 which the scalar curvature
 of the corresponding K\"ahler metric is everywhere  positive. To determine this, notice that,
 since the Hamiltonian
 $t$ generates rotations of period $2\pi$ of 
 $F_1$ and $F_2-E$,  while leaving $E$ fixed, 
symplectic geometry tells us that 
 $$(\b + \d ) + \b = 2\pi (t_{\max}-t_{\min}) , $$ 
 and hence  that 
 $$
 s_{\max}-s_{\min}= \frac{ |\lambda| }{2\pi} (2\b + \d)~. 
 $$
Thus 
 \begin{eqnarray*}
s_{\min} &=&  s_{\max} - \frac{ |\lambda| }{2\pi} (2\b + \d)\\
&>& s_0- \frac{ |\lambda| }{2\pi} (2\b + \d)\\
&=&4\pi \frac{c_1\cdot {[\omega]}}{{[\omega]^2/2}}- \frac{ |\lambda| }{2\pi} (2\b + \d)\\
&=& \frac{8\pi (4\b+3\d)}{{2\b^2+4\b\d+\d^2}}
-  \frac{72\pi (2\b + \d)4\b\d [\d^2/3+ \b\d+\b^2]}{12\b^6  +72\b^5\d 
+ 138\b^4\d^2  + 120\b^3\d^3
+ 54\b^2\d^4 + 12\b\d^5 +\d^6}\\
&=& {8\pi}\b^{-1} \Big[ 
\frac{4+3x}{{2+4x+x^2}}
-  \frac{9(2 + x)4x [x^2/3+ x+1]}{12  +72x
+ 138x^2  + 120x^3 + 54x^4 + 12x^5 +x^6}\Big]\\
&=& {8\pi}\b^{-1}\frac{48+180x+264x^2+ 270x^3+  204x^4+ 102x^5+28x^6+3x^7}{(2+4x+x^2)(12  +72x
+ 138x^2  + 120x^3 + 54x^4 + 12x^5 +x^6)}\\
&>&0.
\end{eqnarray*}
This proves the following: 

\begin{lem} \label{rainmaker}
Any bilaterally symmetric extremal K\"ahler metric on 
$M=\CP_2\# 2\overline{\CP}_2$ has strictly positive scalar curvature. 
\end{lem}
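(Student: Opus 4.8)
The plan is to estimate $s_{\min}$ directly from the identity $s - s_0 = \lambda (t - t_0)$, which holds because extremality together with bilateral symmetry forces $\xi = \mbox{grad}^{1,0}s$ to be a real multiple of the generator $\Xi = \mbox{grad}^{1,0}t$; the proportionality constant $\lambda$ is exactly the negative quantity computed above. Since $\lambda < 0$, the scalar curvature attains its minimum precisely where the Hamiltonian $t$ is maximal, so I would write
$$ s_{\min} = s_{\max} - |\lambda|\,(t_{\max}-t_{\min}) $$
and then discard the unknown $s_{\max}$ in favor of the strict lower bound $s_{\max} > s_0$, which holds simply because the maximum of a non-constant function strictly exceeds its average.

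The key geometric input is the value of $t_{\max}-t_{\min}$. Here I would use that $t$ is the moment map for the circle action generated by $\Xi$, whose fixed-point structure is explicit: $E$ is left fixed, while $F_1$ (of area $\b + \d$) and $F_2 - E$ (of area $\b$) are rotated with period $2\pi$. Symplectic reduction then expresses the total change in the moment value as the sum of the areas of the rotated curves, yielding $2\pi(t_{\max}-t_{\min}) = (\b + \d) + \b = 2\b + \d$, and hence $s_{\max}-s_{\min} = \frac{|\lambda|}{2\pi}(2\b + \d)$.

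It remains to evaluate $s_0$ and assemble the estimate. The average scalar curvature is given by the standard topological formula $s_0 = 4\pi (c_1\cdot[\omega])/([\omega]^2/2)$, which together with the previously computed expressions for $\lambda$, for $c_1\cdot[\omega] = 4\b + 3\d$, and for $[\omega]^2 = 2\b^2 + 4\b\d + \d^2$ reduces the problem to the strict inequality $s_{\min} > s_0 - \frac{|\lambda|}{2\pi}(2\b + \d)$. Clearing denominators and setting $x = \d/\b$, this lower bound becomes $8\pi\b^{-1}$ times a single rational function of $x$ whose numerator is $48 + 180x + 264x^2 + 270x^3 + 204x^4 + 102x^5 + 28x^6 + 3x^7$ and whose denominator is the manifestly positive product $(2 + 4x + x^2)(12 + 72x + 138x^2 + 120x^3 + 54x^4 + 12x^5 + x^6)$. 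Since every coefficient of the numerator is positive and $x > 0$, positivity of $s_{\min}$ is immediate, and the lemma follows.

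The only genuinely delicate point I anticipate is the symplectic computation of $t_{\max}-t_{\min}$: one must correctly identify which fibre and exceptional classes are rotated rather than fixed by $\Xi$ and bookkeep their areas, since an error there would propagate into a wrong constant and could even change the sign of the final estimate. Once that identity is secured, the remainder is a bounded algebraic manipulation in which the favorable all-positive coefficient structure of the resulting numerator does all the work.
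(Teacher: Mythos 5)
Your proposal is correct and follows essentially the same route as the paper: the identity $2\pi(t_{\max}-t_{\min})=(\b+\d)+\b$ from the fixed-point structure of the circle action, the strict bound $s_{\max}>s_0$, the topological formula for $s_0$, and the reduction to the rational function of $x=\d/\b$ with all-positive numerator coefficients $48+180x+264x^2+270x^3+204x^4+102x^5+28x^6+3x^7$ are precisely the steps in the paper's argument. The only point worth making explicit is why $s$ is non-constant (so that $s_{\max}>s_0$ is strict): this follows because the computed $\lambda$ is nonzero for $\b,\d>0$ and $t$ is a non-constant Hamiltonian.
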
 

This computation also implies a $C^0$ estimate for the 
scalar curvature of such metrics. Indeed, we now have 
$$
s_{\max} =  s_{\min} + \frac{ |\lambda| }{2\pi} (2\b + \d)< 
s_0 + \frac{ |\lambda| }{2\pi} (2\b + \d) < 2s_0.$$
Letting $V= [\omega]^2/2$ denote the total volume, we thus have 
$$
s_{\max}V^{1/2} < 2s_0V^{1/2} =  \frac{8\pi c_1\cdot [\omega]}{\sqrt{[\omega]^2/2}}=
\frac{8\pi (4\beta + 3\varepsilon)\sqrt{2}}{\sqrt{2\beta^2 + 4\beta \varepsilon + \varepsilon^2}} < 
24\pi\sqrt{2} ,$$
so that we have the following: 

\begin{lem} \label{zest} The scalar curvature of any 
bilaterally symmetric extremal K\"ahler metric $g$ on $M=\CP_2 \# 2\overline{\CP}_2$
satisfies the $C^0$ estimate 
$$|s| V^{1/2} < 24\pi\sqrt{2} .$$
\end{lem}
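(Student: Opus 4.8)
The plan is to reduce the estimate to a one-sided bound on $s_{\max}$ together with a cohomological evaluation of the average scalar curvature $s_0$. Since Lemma \ref{rainmaker} already gives $s>0$, we have $|s| = s \le s_{\max}$, so it suffices to show $s_{\max}V^{1/2} < 24\pi\sqrt{2}$.

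First I would recall the oscillation identity derived just above the lemma, namely $s_{\max}-s_{\min} = \frac{|\lambda|}{2\pi}(2\beta+\varepsilon)$, which comes from the period-$2\pi$ orbits of the Hamiltonian $t$ generating $\Xi$ together with the proportionality $s-s_0 = \lambda(t-t_0)$. Combining this with the trivial inequality $s_{\min} < s_0$ and with the positivity estimate $\frac{|\lambda|}{2\pi}(2\beta+\varepsilon) < s_0$ extracted inside the proof of Lemma \ref{rainmaker} (this is exactly the assertion $s_{\min}>0$), I obtain $s_{\max} = s_{\min} + \frac{|\lambda|}{2\pi}(2\beta+\varepsilon) < 2 s_0$. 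This passage from a two-sided oscillation estimate to the clean one-sided bound $s_{\max} < 2s_0$ is the only step requiring more than bookkeeping, and it is precisely where positivity of the scalar curvature enters.

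Next I would evaluate $2 s_0 V^{1/2}$ purely in terms of the cohomology class. Using the standard identity $\int_M s\, d\mu = 4\pi\,(c_1\cdot[\omega])$ for a Kähler surface, together with $V = [\omega]^2/2$, gives $s_0 = 4\pi (c_1\cdot[\omega])/([\omega]^2/2)$, whence $2 s_0 V^{1/2} = 8\pi (c_1\cdot[\omega])/\sqrt{[\omega]^2/2}$. Substituting the values $c_1\cdot[\omega] = 4\beta+3\varepsilon$ and $[\omega]^2 = 2\beta^2+4\beta\varepsilon+\varepsilon^2$ for the bilaterally symmetric class $[\omega] = (\beta+\varepsilon)(F_1+F_2)-\varepsilon E$ (the latter computed from $F_i^2=0$, $F_1\cdot F_2=1$, $E^2=-1$) yields the explicit expression $\frac{8\pi\sqrt{2}\,(4\beta+3\varepsilon)}{\sqrt{2\beta^2+4\beta\varepsilon+\varepsilon^2}}$ appearing above.

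Finally, the target bound $s_{\max}V^{1/2} < 24\pi\sqrt{2}$ follows from $2 s_0 V^{1/2} < 24\pi\sqrt{2}$, which after cancelling the common factor $8\pi\sqrt{2}$ and squaring both (positive) sides is equivalent to $(4\beta+3\varepsilon)^2 < 9(2\beta^2+4\beta\varepsilon+\varepsilon^2)$, i.e.\ to $2\beta^2+12\beta\varepsilon > 0$, which is manifest since $\beta,\varepsilon>0$. I expect essentially no obstacle here: the entire argument is a chain of substitutions feeding into an elementary quadratic inequality, and all of the genuine content has already been absorbed into Lemmas \ref{rainmaker} and the symplectic oscillation computation preceding the statement.
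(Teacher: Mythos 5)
Your proposal is correct and follows essentially the same route as the paper: the paper likewise combines the oscillation identity $s_{\max}-s_{\min}=\frac{|\lambda|}{2\pi}(2\beta+\varepsilon)$ with $s_{\min}<s_0$ and the inequality $\frac{|\lambda|}{2\pi}(2\beta+\varepsilon)<s_0$ established in the proof of Lemma \ref{rainmaker} to get $s_{\max}<2s_0$, and then evaluates $2s_0V^{1/2}=8\pi(c_1\cdot[\omega])/\sqrt{[\omega]^2/2}=8\pi\sqrt{2}\,(4\beta+3\varepsilon)/\sqrt{2\beta^2+4\beta\varepsilon+\varepsilon^2}<24\pi\sqrt{2}$. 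The only difference is that you spell out the final quadratic inequality and the role of positivity in replacing $|s|$ by $s$, both of which the paper leaves implicit.
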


\section{The Bach Tensor}
 \label{fugue} 

If $M$ is any smooth compact oriented $4$-manifold, consider the 
conformally invariant Riemannian functional 
$${\mathcal W}(g)= \int_{M} |W|^{2}_gd\mu_{g}$$
obtained  by squaring the $L^2$-norm of the Weyl curvature. 
For any  smooth 1-parameter family of metrics 
$$g_t:= g +t\dot{g}+ O(t^2)$$ 
the first variation of this functional is then   given by 
	$$\left. \frac{d}{dt}{\mathcal W}(g_{t})\right|_{t=0} =
	\int {\dot{g}}^{ab} B_{ab} ~d\mu $$
where \cite{bes} the {\em Bach tensor} $B$ is given by
$$B_{ab}:=(\nabla^c\nabla^d+\frac{1}{2}r^{cd})
W_{acbd}~.$$  

The Bach tensor is automatically symmetric and trace-free, and the latter  is
precisely  the infinitesimal
version of the fact that the functional $\int|W|^2d\mu$ is conformally invariant. 
Similarly, since  $\int|W|^2d\mu$ is also invariant under the action of
the diffeomorphism group, the first variation of ${\mathcal W}$ 
with respect to any 
 Lie derivative  $\dot{g}={\mathcal L}_vg$ must also  vanish. Thus 
$$0 = \int (\nabla^{(a}v^{b)})  B_{ab} ~d\mu = -\int v^b (\nabla^a B_{ab}) ~d\mu$$
for any vector field $v$, and it follows that we must have 
$$\nabla^aB_{ab}=0.$$
Thus, the Bach tensor of any metric  is automatically divergence-free. 

Now the Bianchi identities imply that 
$$(\nabla^a\nabla^b + \frac{1}{2}r^{ab}){(\star W)}_{cabd}  =0  $$
for any Riemannian 4-manifold, so we can also rewrite the Bach tensor as 
$$B_{ab}= (2\nabla^c\nabla^d+r^{cd})(W_+)_{acbd}$$
where $W_+= (W+\star W)/2$ is the self-dual Weyl curvature.
Moreover, since 
$$2\int_M | W_+|^2 d\mu = \int_{M} |W|^{2}d\mu + 12\pi^2 \tau (M) $$
for any Riemannian metric, it follows that the vanishing of the Bach tensor
$B$ is equivalent to $g$ being a critical point of the functional 
$${\mathcal W}_+(g)= \int |W_+|_g^2 d\mu_g~.$$

But for a K\"ahler surface $(M^4,g,J)$ with K\"ahler form $\omega$, 
$${(W_+)_{ab}}^{cd}= \frac{s}{12}\left[
\omega_{ab}\omega^{cd} - \delta_{a}^{[c} \delta_{b}^{d]}+ {J_a}^{[c}{J_b}^{d]}
\right]
$$
so in this case we obtain 
$$B_{ab}= \frac{s}{12}\mathring{r}_{ab} + \frac{1}{4}{J_a}^c{J_b}^d\nabla_c\nabla_d s
-\frac{1}{12} \nabla_a\nabla_b s + \frac{1}{6}g_{ab}\Delta s.$$
If $g$ happens to be extremal, $\nabla\nabla s$ is $J$-invariant, and
this simplifies to become
\begin{equation}\label{jose} 
B = \frac{1}{12} \Big[ s \mathring{r} + 2 \Hess_0(s)\Big] 
\end{equation}
where $\Hess_0$ denotes the trace-free part of the Hessian $\nabla\nabla$. 
Moreover, the $J$-invariance of $\nabla\nabla s$ implies that 
$$(\nabla \nabla s ) (J\cdot , \cdot ) = i\partial \bar{\partial}s,$$
so we deduce the following: 
\begin{lem} \label{canon} 
For any extremal K\"ahler  surface 
$(M^4,g,J)$, 
 the Bach tensor $B$ of $g$ 
 can be written as $B= \psi (\cdot , J\cdot )$, where 
 $\psi$ is a 
harmonic  $(1,1)$-form. Moreover, $\psi$ is given explicitly by   
$$
\psi := B(J\cdot , \cdot ) = \frac{1}{12}\Big[ s\rho + 2  i\partial \bar{\partial}s
\Big]_0
$$
where $\rho$ is the Ricci form of $(M,g,J)$, and  $[~\cdot ~ ]_0$ denotes
projection to the primitive    part of a $(1,1)$-form. 
\end{lem}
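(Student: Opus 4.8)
The plan is to establish Lemma~\ref{canon} in two stages: first showing that the Bach tensor $B$ of an extremal K\"ahler surface corresponds to a $(1,1)$-form via the almost-complex structure $J$, and then verifying that this form is harmonic with the stated explicit expression. I would begin from the simplified formula \eqref{jose}, namely $B = \frac{1}{12}[s\mathring{r} + 2\Hess_0(s)]$, which holds precisely because extremality forces $\nabla\nabla s$ to be $J$-invariant. The first observation is that both summands, viewed as symmetric $2$-tensors, are $J$-invariant: the trace-free Ricci tensor $\mathring{r}$ of any K\"ahler metric is $J$-invariant (since the Ricci form is of type $(1,1)$), and $\Hess_0(s)$ inherits $J$-invariance from the hypothesis on $\nabla\nabla s$. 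A symmetric, $J$-invariant $2$-tensor $T$ determines a $2$-form $T(J\cdot,\cdot)$ which is automatically of type $(1,1)$ and primitive precisely when $T$ is trace-free; applying this to $B$ yields the $(1,1)$-form $\psi := B(J\cdot,\cdot)$.

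Next I would produce the explicit formula for $\psi$. The identity $(\nabla\nabla s)(J\cdot,\cdot) = i\partial\bar\partial s$, already recorded in the excerpt, converts the Hessian term directly into $i\partial\bar\partial s$. For the curvature term, contracting $\mathring{r}$ with $J$ on the first slot produces the trace-free part of the Ricci form $\rho$; since $\rho$ is already a $(1,1)$-form, its primitive part is exactly the trace-free piece. Combining these with the overall factor $\frac{1}{12}$ and the relative coefficient $2$ from \eqref{jose} gives $\psi = \frac{1}{12}[s\rho + 2i\partial\bar\partial s]_0$, where the subscript $0$ denotes projection to the primitive part. One must check that applying $[~\cdot~]_0$ to the whole bracket is consistent with having started from the trace-free tensor $B$; this is immediate because trace-freeness of the $2$-tensor corresponds precisely to primitivity of the associated $(1,1)$-form.

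The main obstacle, and the real content of the lemma, is the harmonicity of $\psi$. Here I would invoke the two structural facts established just before the lemma: the Bach tensor is symmetric, trace-free, and \emph{divergence-free}, i.e. $\nabla^a B_{ab}=0$. For a $(1,1)$-form on a K\"ahler surface, being both closed and co-closed is equivalent to harmonicity, and the translation between divergence-freeness of the symmetric tensor $B$ and co-closedness of the form $\psi=B(J\cdot,\cdot)$ is the key link. The divergence-free condition $\nabla^a B_{ab}=0$ transcribes, under the correspondence $T \mapsto T(J\cdot,\cdot)$ on a K\"ahler manifold (where $\nabla J = 0$), into $\delta\psi = 0$; combined with the fact that a trace-free $J$-invariant symmetric tensor with vanishing divergence yields a $(1,1)$-form that is also closed (using the K\"ahler identities relating $d$, $\delta$, and the complex operators), one concludes that $\psi$ is harmonic. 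I expect the delicate point to be carefully matching the index contractions so that divergence-freeness of $B$ yields precisely co-closedness of $\psi$, and then arguing that a primitive co-closed $(1,1)$-form on a compact K\"ahler surface is automatically closed, hence harmonic.
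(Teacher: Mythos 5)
Your proposal is correct and follows essentially the same route as the paper's: the explicit formula, the $(1,1)$-type statement, and primitivity all come from (\ref{jose}) together with $J$-invariance and trace-freeness exactly as you describe (in the paper this part is established in the discussion preceding the lemma, so its proof proper only addresses harmonicity), and co-closedness is obtained, as you propose, by transcribing $\nabla^a B_{ab}=0$ through the parallel $J$. The one point of divergence is the closedness step: where you suggest K\"ahler identities to show that a primitive co-closed $(1,1)$-form on a surface is closed, the paper instead observes that $\Lambda^{1,1}_0=\Lambda^-$ in real dimension $4$, so $\psi$ is anti-self-dual and $d\psi = d(-\star\psi)=\star\delta\psi=0$. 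Your mechanism does work --- $\delta\psi=0$ plus primitivity of $\psi$ forces $\partial\psi$ and $\bar\partial\psi$ to be primitive $3$-forms, which vanish identically in complex dimension $2$ --- but note that it is really the same dimension-specific fact in different clothing, and that compactness is not needed anywhere: closed plus co-closed already gives $\Delta\psi=0$ pointwise, so the word ``compact'' in your final step can simply be dropped.
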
 
\begin{proof} It only remains to show that $\psi$ is harmonic. 
Since $B$ is divergence-free, 
$$(\delta\psi)_b= -\nabla^a\psi_{ab} = \nabla^a\psi_{ba} =  \nabla^a B_{ca}{J_b}^c= 
  {J_b}^c\nabla^a B_{ac}=0~,$$
and  the $(1,1)$-form $\psi$ is  therefore co-closed. But since $B$ is orthogonal to $g$,
$\psi\in \Lambda^{1,1}$ is orthogonal to the K\"ahler form 
$\omega$, and so belongs to the primitive $(1,1)$-forms
$\Lambda^{1,1}_0\subset  \Lambda^{1,1}$. However, 
$\Lambda^{1,1}_0= \Lambda^-$ on a Hermitian manifold of real dimension $4$,
so this shows that $\psi$ is an anti-self-dual $2$-form.Thus 
$$d\psi = d (-\star \psi) = \star \delta \psi =0,$$
and  the $(1,1)$-form  $\psi$ is therefore harmonic, as claimed. 
\end{proof}

Now consider the Calabi functional 
$${\mathcal C} (g) =  \int s^2 d\mu$$
on the space of  K\"ahler metrics. If we restrict this functional to any 
particular  K\"ahler class, the critical points are by definition 
just the extremal K\"ahler metrics. 
However, if we instead have a critical point of this functional on the space
of {\em all} K\"ahler metrics, a critical point must not only be extremal, but
must be conformally Einstein on the set where $s\neq 0$. 

\begin{prop} \label{gloria} 
Suppose that $g$ is an extremal K\"ahler metric on a compact
complex surface, and suppose that its K\"ahler class
 is a critical point of ${\mathcal A}([\omega ])$, considered  
 as a function on the K\"ahler cone. 
Then $g$  has vanishing
Bach tensor. Moreover, on the open subset of $M$ where the scalar curvature
$s$ of $g$ is non-zero, the conformally related metric 
$h=s^{-2}g$ is Einstein. 
\end{prop}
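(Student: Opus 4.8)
The plan is to combine the variational characterization of $\mathcal{A}$ with the explicit description of the Bach tensor supplied by Lemma \ref{canon}. The mechanism is the curvature inequality $\frac{1}{32\pi^2}\mathcal{C}(g')\geq\mathcal{A}([\omega'])$, valid for every Kähler metric $g'$ with equality exactly when $g'$ is extremal. On Kähler surfaces one has $|W_+|^2=s^2/24$, so that $\mathcal{W}(g)=\frac{1}{12}\mathcal{C}(g)-12\pi^2\tau(M)$ for every Kähler metric; hence the first-variation formula for $\mathcal{W}$ translates into
$$
\frac{d}{dt}\mathcal{C}(g_t)\Big|_{t=0}=12\int_M \dot g^{ab}B_{ab}\,d\mu
$$
for any path $g_t$ of Kähler metrics. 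The heart of the argument is to feed this formula a cleverly chosen path.

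First I would prove $B\equiv 0$. Since $g$ is extremal, the nonnegative function $t\mapsto\frac{1}{32\pi^2}\mathcal{C}(g_t)-\mathcal{A}([\omega_t])$ vanishes at $t=0$ for every Kähler path through $g$; thus $t=0$ is a minimum, its derivative vanishes there, and so $\frac{1}{32\pi^2}\frac{d}{dt}\mathcal{C}(g_t)|_{0}=D\mathcal{A}\cdot[\dot\omega_0]$. Because $[\omega]$ is by hypothesis a critical point of $\mathcal{A}$ on the Kähler cone, the right-hand side is zero in every direction, and therefore $\frac{d}{dt}\mathcal{C}(g_t)|_{0}=0$ for every Kähler path. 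The decisive step is the choice of path. By Lemma \ref{canon} the form $\psi=B(J\cdot,\cdot)$ is a closed real $(1,1)$-form, so $\omega_t:=\omega+t\psi$ is again a Kähler form for all small $t$; and since $g=\omega(\cdot,J\cdot)$ while $B=\psi(\cdot,J\cdot)$, the associated metric variation is precisely $\dot g_0=B$. Substituting into the first-variation formula gives $0=12\int_M|B|^2\,d\mu$, whence $B\equiv 0$.

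It remains to identify $h=s^{-2}g$ as Einstein on the open set $\{s\neq 0\}$. Since $g$ is extremal, the vanishing of $B$ together with (\ref{jose}) yields $s\mathring r+2\Hess_0(s)=0$, that is $\mathring r=-2s^{-1}\Hess_0(s)$ wherever $s\neq 0$. I would then apply the standard conformal transformation law for the trace-free Ricci tensor in dimension four: for $h=e^{2f}g$ one has $\mathring r_h=\mathring r_g-2[\Hess f-df\otimes df]_0$. Taking $f=-\log|s|$, so that $h=s^{-2}g$, a short computation gives $\Hess f-df\otimes df=-s^{-1}\Hess s$, and hence $\mathring r_h=\mathring r_g+2s^{-1}\Hess_0(s)=0$ by the identity above. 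Thus $h$ has vanishing trace-free Ricci tensor on $\{s\neq 0\}$, and the contracted Bianchi identity then forces its scalar curvature to be locally constant, so $h$ is Einstein on each component of this set.

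The conformal computation of the last paragraph is routine and is essentially Derdzi\'nski's observation. The conceptual crux, and the step needing the most care, is the first part: one must recognize that criticality of $\mathcal{A}$ yields the vanishing of the \emph{full} first variation of $\mathcal{C}$, not merely its restriction to a single Kähler class, which is exactly what the envelope-type argument via the nonnegative function $\frac{1}{32\pi^2}\mathcal{C}(g_t)-\mathcal{A}([\omega_t])$ provides; and then that the right path to test is the one moving $\omega$ in the direction of the Bach form $\psi$ itself, so that $\dot g_0=B$ and the vanishing first variation collapses to the positive-definite integral $\int_M|B|^2\,d\mu$.
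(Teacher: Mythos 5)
Your proposal is correct and follows essentially the same route as the paper's proof: reduce criticality of ${\mathcal A}$ to the vanishing of the first variation of the Calabi functional over \emph{all} K\"ahler metrics, test with the variation $\dot{g}=B$, $\dot{\omega}=\psi$ supplied by Lemma \ref{canon} to force $\int_M |B|^2\,d\mu=0$, and then apply the conformal transformation law for the trace-free Ricci tensor with $u=s^{-1}$. Your envelope argument for passing from criticality of ${\mathcal A}$ to criticality of ${\mathcal C}$, and the final appeal to the contracted Bianchi identity, merely make explicit two steps that the paper asserts without elaboration.
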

\begin{proof}
Our hypothesis is equivalent to the assumption that $g$
is a critical point of ${\mathcal C}$, considered as
a function on the space of all K\"ahler metrics. 
But for any K\"ahler metric in real dimension $4$, 
$$|W_+|^2 = \frac{s^2}{24},$$
so this happens iff $g$ is a critical point of 
the restriction of ${\mathcal W}_+$ to the space of K\"ahler metrics. 
In other words, our hypothesis is true  if and only if 
$g$ is a K\"ahler metric such that 
$$\int {\dot{g}}^{ab} B_{ab} ~d\mu =0$$
for every $\dot{g}$ arising from a variation through K\"ahler metrics. 
But Lemma \ref{canon} says that one may find such a 
 variation, with  $\dot{\omega} = \psi$, 
by setting $\dot{g}=B$. It therefore follows that 
$$0=\int {\dot{g}}^{ab} B_{ab} ~d\mu =
	\int |B|^2 ~d\mu $$
	and we must therefore  have $B\equiv 0$.

	Now  recall that if $\hat{g}= u^2 g$ is any 
	conformal rescaling of a  given Riemannian metric  $g$, 
	the trace-free Ricci curvature of $\hat{g}$ is
	given by 
	$$\hat{\mathring{r}}= \mathring{r} + (n-2) u \Hess_0 (u^{-1})~,$$
	where $n$ is the real dimension,
	so that $n=4$ in the case at hand. 
	But we have just shown that $B=0$, so 
	 (\ref{jose}) tells us that 
	$$
	\mathring{r} = - 2 s^{-1}\Hess_0(s)~.
	$$
	Setting $u= s^{-1}$, we therefore  conclude that 
	$$
	\hat{\mathring{r}}= - 2 s^{-1}\Hess_0(s) + (4-2) s^{-1}\Hess_0(s)=0~, 
	$$
	so   the conformally related metric $h=s^{-2}g$ is indeed Einstein 
	on  the  open set  $\{ p\in M~|~s(p)\neq 0\}$ where it is defined. 
\end{proof} 

The reader should note that 
Proposition  \ref{gloria} has  previously been pointed out   by   
 Simanca 
\cite{simstrong}, but, because of the central r\^ole it plays in the present work, 
we have thought it important  to include a self-contained and transparent proof
in this article.  
The  fact that 
 Bach-flat   K\"ahler metrics can be rescaled by their scalar curvatures to yield 
 Einstein metrics has of course been known for much longer, and   is due to 
 Derdzi{\'n}ski \cite{derd}.

\begin{cor} \label{suite} 
Let $x_0$ be the positive real number of  Lemma \ref{magna}. If  
the K\"ahler class $(1+ x_0)(F_1+F_2) - x_0 E$
on $M= \CP_2\# 2\overline{\CP}_2$ contains an extremal K\"ahler metric
$g$, then $h=s^{-2}g$ is an Einstein metric on $M$. 
\end{cor}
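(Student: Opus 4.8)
The plan is to recognize Corollary \ref{suite} as an essentially immediate consequence of Proposition \ref{gloria}, Lemma \ref{magna}, and Lemma \ref{rainmaker}, once the hypotheses of Proposition \ref{gloria} are verified. The given class $(1+x_0)(F_1+F_2)-x_0E$ is precisely the bilaterally symmetric class corresponding to $\beta=1$, $\varepsilon=x_0$, so that $f(x)={\mathcal A}([1+x](F_1+F_2)-xE)$ records the restriction of ${\mathcal A}$ to this one-parameter family, and Lemma \ref{magna} gives $f^\prime(x_0)=0$. The one genuine point requiring attention is that this makes the class a critical point of ${\mathcal A}$ on the \emph{entire} Kähler cone, not merely along the bilaterally symmetric slice, since Proposition \ref{gloria} demands criticality on the full cone.

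To see this, I would exploit the two symmetries of ${\mathcal A}$. Recall that $H^{1,1}(M,\RR)$ is three-dimensional, with basis $F_1+F_2$, $E$, and $F_1-F_2$; the involution $\sigma$ induced by $F_1\leftrightarrow F_2$ fixes the first two classes and sends $F_1-F_2\mapsto -(F_1-F_2)$. Since ${\mathcal A}$ is $\sigma$-invariant, at the $\sigma$-fixed class $[\omega]=(1+x_0)(F_1+F_2)-x_0E$ its differential satisfies $d{\mathcal A}(v)=d{\mathcal A}(\sigma_\ast v)=-d{\mathcal A}(v)$ for every $v$ in the $(-1)$-eigenspace $\RR(F_1-F_2)$, so $d{\mathcal A}$ annihilates the transverse direction automatically. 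Within the two-dimensional fixed subspace spanned by $F_1+F_2$ and $E$, the scale-invariance ${\mathcal A}(a[\omega])={\mathcal A}([\omega])$ forces $d{\mathcal A}$ to vanish in the radial direction, while $f^\prime(x_0)=0$ kills the one remaining direction, namely the change of ratio $\varepsilon/\beta$. Hence $d{\mathcal A}$ vanishes in all three coordinate directions, and $[\omega]$ is a critical point of ${\mathcal A}$ on the whole Kähler cone.

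With this established, Proposition \ref{gloria} applies directly: since $g$ is an extremal Kähler metric whose class is critical for ${\mathcal A}$, its Bach tensor vanishes, and $h=s^{-2}g$ is Einstein on the open set $\{p\in M:s(p)\neq 0\}$. To upgrade this to all of $M$, I would invoke Lemma \ref{rainmaker}. Because the class $(1+x_0)(F_1+F_2)-x_0E$ is $\ZZ_2$-invariant, the uniqueness of extremal Kähler metrics in a fixed class forces $g$ itself to be bilaterally symmetric, whence Lemma \ref{rainmaker} guarantees that its scalar curvature is strictly positive everywhere. Consequently $\{s\neq 0\}=M$, the conformal factor $s^{-2}$ is smooth and positive on the whole manifold, and $h=s^{-2}g$ is a globally defined Einstein metric on $M$.

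The argument is short, and no single step presents a serious obstacle; the only care required lies in the symmetry-and-homogeneity bookkeeping of the second paragraph, which is what turns a critical point of the one-variable function $f$ into a genuine critical point of ${\mathcal A}$ on the full three-dimensional Kähler cone. Everything else is a direct citation of the preceding results, with Lemma \ref{rainmaker} playing the essential final role of ensuring the conformal factor never degenerates.
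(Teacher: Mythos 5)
Your proposal is correct and takes essentially the same route as the paper: the paper likewise promotes $f^\prime(x_0)=0$ to criticality of ${\mathcal A}$ on the full K\"ahler cone by invoking its invariance under rescalings and under $F_1\leftrightarrow F_2$, and then cites Proposition \ref{gloria} together with Lemma \ref{rainmaker} to conclude that $h=s^{-2}g$ is Einstein on all of $M$. The only difference is expository: you write out the eigenspace/radial-direction bookkeeping and the Chen--Tian uniqueness step (ensuring $g$ itself is bilaterally symmetric so that Lemma \ref{rainmaker} applies), both of which the paper leaves implicit.
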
 

\begin{proof}
The functional ${\mathcal A}([\omega ])$ is invariant under rescalings 
$[\omega ] \to \lambda [\omega ]$ and bilateral symmetries 
$F_1 \leftrightarrow F_2$. Thus any critical point of the restriction of ${\mathcal A}$ 
to the classes of the form $(1+x) (F_1+F_2) - x E$ is also a critical point
of ${\mathcal A}$  itself. But since
$$
 f(x)= {\mathcal A} \left((1+x) (F_1+F_2) - x E\right) ~,
$$
Lemma \ref{magna} exactly tells us  that 
$(1+ x_0)(F_1+F_2) - x_0 E$ is such a critical point.  
Since Lemma \ref{rainmaker} also tells us that such an extremal K\"ahler metric would 
automatically have $s> 0$, Proposition \ref{gloria} then guarantees that 
$h=s^{-2}g$ would be an Einstein metric,  defined on all of  $M$. 
\end{proof} 

\section{Sobolev Constants} 
\label{sobstory}

If $(M, J )$ is a compact complex surface, we will say that a
K\"ahler class $[\omega]$ on $M$ belongs to  the {\em controlled  cone}  if

$$c_1^2 (M) - 
\frac{2}{3} \left[ \frac{(c_1\cdot [\omega ]) ^2}{[\omega]^2} - {\mathcal F} ([\omega ], \xi) \right] 
> 0$$
 This is equivalent to requiring that
 $${\mathcal A} ([\omega ]) < \frac{3}{2}c_1^2~,$$
 so that an extremal K\"ahler metric $g$ in $[\omega]$
 would then satisfy 
$$
\frac{1}{32\pi^2} \int_M s^2 d\mu < \frac{3}{2} c_1^2 ~.
$$

Now  our computations in \S \ref{action} 
for  bilaterally symmetric classes shows that 
\begin{eqnarray*}
{\mathcal A} ([\omega ])&=& 3\left(
\frac{32+ 176x+318x^2+280x^3+132x^4+32x^5+3x^6}{
12+72x+138x^2+120x^3+54x^4+12x^5+x^6}\right)\\
&=& 9\left(
\frac{32+ 176x+318x^2+280x^3+132x^4+32x^5+3x^6}{
36+216x+414x^2+360x^3+162x^4+36x^5+3x^6}\right) \\
&<& 9,
\end{eqnarray*}
as follows by term-by-term comparison of  the numerator and denominator. 
Since   $c_1^2 (\CP_2 \# 2\overline{\CP}_2) = 7$, 
we thus have 
$${\mathcal A} ([\omega ]) < 9 < 10.5  = \frac{3}{2} c_1^2(M)$$
for any bilaterally symmetric class. This shows the following:

\begin{lem}
Any bilaterally symmetric K\"ahler class $[\omega]$ on $M=\CP_2 \# 2\overline{\CP}_2$
belongs to the controlled  cone. 
\end{lem}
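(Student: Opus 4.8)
The plan is to reduce the statement to the explicit rational expression for $\mathcal{A}$ already obtained in \S\ref{action}, and then to verify the defining inequality of the controlled cone by an elementary coefficient comparison. First I would recall that, by the equivalence noted in the definition, a K\"ahler class lies in the controlled cone precisely when $\mathcal{A}([\omega]) < \frac{3}{2}c_1^2(M)$. For $M=\CP_2 \# 2\overline{\CP}_2$ we have $c_1^2(M)=7$, so the target is simply $\mathcal{A}([\omega]) < \frac{21}{2} = 10.5$. Thus everything comes down to a uniform upper bound on $\mathcal{A}$ over the bilaterally symmetric classes.

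Next I would use the scaling invariance $[\omega] \rightsquigarrow a[\omega]$ and the $F_1\leftrightarrow F_2$ symmetry of $\mathcal{A}$ to normalize any bilaterally symmetric class to the form $(1+x)(F_1+F_2)-xE$ with $x=\varepsilon/\beta>0$, so that $\mathcal{A}([\omega]) = f(x)$ for the rational function computed earlier, with numerator $32+176x+\cdots+3x^6$ over denominator $12+72x+\cdots+x^6$, all multiplied by the factor $3$. The whole lemma will then follow from the single estimate $f(x)<9$ for all $x>0$, since $9<10.5$ gives the claim with room to spare.

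The crux is therefore to prove $f(x)<9$, and the clean way is to absorb the factor $3$ into the denominator, writing $f(x)=9\,N(x)/(3D(x))$ where $3D(x)=36+216x+414x^2+360x^3+162x^4+36x^5+3x^6$, and then to compare $N$ against $3D$ monomial by monomial. Every coefficient of $N$ is strictly smaller than the corresponding coefficient of $3D$ except for the leading $x^6$ term, where the two agree; since $x>0$, each monomial of $N$ is accordingly bounded by the corresponding monomial of $3D$, with at least one strict inequality (already the constant terms give $32<36$). Hence $N(x)<3D(x)$, so $f(x)<9$, and therefore $\mathcal{A}([\omega])<9<10.5=\frac{3}{2}c_1^2(M)$ for every bilaterally symmetric class.

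I do not expect a genuine obstacle here once the formula for $\mathcal{A}$ from \S\ref{action} is in hand; the only thing to be careful about is the bookkeeping of the coefficient comparison and confirming that strictness survives for all $x>0$ (which it does, thanks to the strict inequality in the lower-order coefficients). The argument is robust precisely because the bound $9$ it produces is so much stronger than the $10.5$ actually required, so no delicate optimization in $x$ is needed.
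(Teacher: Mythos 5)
Your proposal is correct and follows essentially the same route as the paper: the paper likewise invokes the formula $\mathcal{A}([\omega])=f(x)$ from \S\ref{action}, rewrites $f$ with the factor $9$ so the denominator becomes $36+216x+414x^2+360x^3+162x^4+36x^5+3x^6$, and deduces $f(x)<9<10.5=\frac{3}{2}c_1^2(M)$ by exactly the term-by-term coefficient comparison you describe. Your write-up merely makes explicit the minor point the paper leaves implicit, namely that strictness survives because the lower-order coefficients are strictly smaller even though the $x^6$ coefficients coincide.
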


By refining  an idea   first  suggested
by 
Gang Tian \cite{tianote}, 
we will  now show  that 
 this   allows us to prove to uniform estimates of the Yamabe constants
 and Sobolev constants  of these metrics; 
  cf. \cite{chenweb,tv1}. Let us first recall that the {\em Yamabe
constant} of 
 a conformal class $[g]$ of Riemannian metrics on a compact
$4$-manifold $M$ is the number 
$$Y_{[g]} 
=\inf_{\hat{g}\in [g]}
\frac{\int_M {s}_{\hat{g}}\,d\mu_{\hat{g}}}{
\sqrt{\int_M d\mu_{\hat{g}}}}.$$
By the celebrated work of Trudinger, Aubin, and Schoen \cite{aubis,lp}
 the  infimum for any conformal class $[g]$
is actually achieved by some  metric, and this 
so-called Yamabe minimizer  $g_Y\in [g]$  necessarily has constant
scalar curvature.

Now the scalar curvature of a metric $\hat{g}= u^2g$ conformal to $g$
satisfies 
$$
s_{\hat{g}}u^3= (6\Delta + s) u~,$$
where $\Delta$ is the positive Laplacian, 
so the Yamabe constant may be re-expressed as 
$$Y_{[g]} =\inf_{u\not\equiv 0}\frac{\int (6|\nabla u|^2 + {s}_gu^2) 
\,d\mu_g}{\left(\int u^4d\mu_g\right)^{1/2}},$$
and notice that we are now allowing ourselves to consider even  those smooth $u$ 
which change sign, since replacing 
$u$ with a positive smoothing of $|u|$ at worst decreases the
quotient on the right.  
If $Y_{[g]}> 0$, we thus have 
$$
\|u\|_{L^4}^2 \leq  \frac{6}{Y_{[g]}} ~\|\nabla u\|^2_{L^2} + \frac{\max{s_g}}{Y_{[g]}} ~\|u\|_{L^2}^2 $$
for  all
$u\in L^2_1$. In particular, if we define \cite{andsd,tv1}  the 
 {\em Sobolev constant} $C_S$
of $g$ to be the smallest constant such that the estimate 
$$\|u\|_{L^4}^2 \leq C_S \left( \|\nabla u\|^2_{L^2} + V^{-1/2} \|u\|_{L^2}^2 \right)$$
holds, where $V$ is the total volume of $(M,g)$, then 
we automatically have 
\begin{equation}
\label{cushion} 
C_S\leq \frac{\max (6 , s_{\max} V^{1/2} )}{Y_{[g]}} 
\end{equation}
for any compact Riemannian $4$-manifold  $(M,g)$ with  $Y_{[g]}>0$.

Now the Gauss-Bonnet and  signature theorems for a 
smooth compact oriented $4$-manifold $M$ imply that 
$$
  \frac{1}{4\pi^2}\int_M\left( \frac{s^2}{24}+2|W_+|^2-
\frac{|\mathring{r}|^2}{2}\right)d\mu_g = (2\chi + 3\tau )(M)
$$
for every Riemannian metric on $M$. If $M$ admits an orientation-compatible
almost-complex
structure, $(2\chi + 3\tau )(M)= c_1^2$, and we therefore have 
$$
\int_M\left( \frac{s^2}{24}+2|W_+|^2 \right)d\mu_g \geq 4\pi^2 c_1^2 . 
$$
However, ${\mathcal W}_+= \int |W_+|^2 d\mu$ is conformally
invariant, so  applying this inequality  to a Yamabe minimizer $g_Y\in [g]$
gives us 
$$
Y_{[g]}^2  \geq 96\pi^2  c_1^2 - 48 ~{\mathcal W}_+(g)
$$
for every Riemannian metric $g$. In the special case when $g$ is 
K\"ahler, we have 
$$
 {\mathcal W}_+(g) = \int_M \frac{s^2}{24} d\mu
$$
so this gives us 
$$
Y_{[g]}^2  \geq 96\pi^2  c_1^2 - 2\int_M s^2 d\mu  
$$
in the K\"ahler case. 
When $g$ is extremal, 
$$
\int_M s^2 d\mu = 32\pi^2 {\mathcal A}([\omega ])
$$
we therefore conclude that 
\begin{equation}
\label{yaya} 
Y_{[g]}^2  \geq  64\pi^2  \left(\frac{3}{2} c_1^2 - {\mathcal A}([\omega ]) \right) .
\end{equation}
Thus Yamabe constants are bounded away from zero in the interior 
of the controlled cone. 

Now Lemma \ref{rainmaker} tells us that any
 bilaterally symmetric extremal K\"ahler metric
 $g$  on $M=\CP_2 \# 2\overline{\CP}_2$
 has positive scalar curvature, and hence has positive Yamabe constant  
 $Y_{[g]}$. Since $c_1^2 (M) = 7$ and ${\mathcal A} (g) < 9$,
 inequality 
 (\ref{yaya}) therefore tells us that these metrics all satisfy 
 $$Y_{[g]} > 8\pi \sqrt{\frac{3}{2}(7)-9} = 4\pi \sqrt{6}.$$
With Lemma  \ref{zest} 
 and inequality (\ref{cushion}), this then tells us that the Sobolev constants 
 of these metrics 
 satisfy the uniform bound
 $$C_S < \frac{\max (6, 24\pi\sqrt{2})}{ 4\pi \sqrt{6}} = 2\sqrt{3}.$$
 But, by   previous work of the first and third authors \cite{chenweb}, 
 a uniform upper bound on Sobolev constants implies 
a weak compactness statement:

\begin{thm} \label{leap} 
Let  $g_i$ be an arbitrary sequence of unit-volume bilaterally symmetric extremal 
K\"ahler metrics on $M=\CP_2 \# 2\overline{\CP}_2$. 
Then there is a subsequence  $g_{i_\jmath}$ of these metrics which 
converges in the Gromov-Hausdorff topology to an extremal K\"ahler 
metric on a compact complex $2$-orbifold. 
\end{thm}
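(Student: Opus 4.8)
The plan is to reduce the statement directly to the orbifold compactness theorem of \cite{chenweb}, whose conclusion is exactly Gromov--Hausdorff subconvergence of a sequence of extremal K\"ahler metrics to an extremal K\"ahler metric on a compact complex $2$-orbifold, provided one supplies three uniform inputs: a volume normalization, a uniform upper bound on the Sobolev constant (to prevent collapse of volume and degeneration of the metric structure), and a uniform bound on the $L^2$-norm of the full Riemann curvature tensor (the energy that, by the Anderson--Bando--Kasue--Nakajima mechanism underlying \cite{chenweb}, may concentrate only at finitely many orbifold points). The first input is built into the hypothesis of unit volume, and the second is precisely the bound $C_S < 2\sqrt{3}$ already established in \S\ref{sobstory}. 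So the only genuine work is to verify the $L^2$ curvature bound.

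First I would observe that, for each extremal metric $g_i$ in the sequence, the identity $\int_M s^2\,d\mu = 32\pi^2 {\mathcal A}([\omega_i])$ together with the bound ${\mathcal A} < 9$ proved in \S\ref{sobstory} gives a uniform bound $\int_M s^2\,d\mu < 288\pi^2$. Since the metrics are K\"ahler, $|W_+|^2 = s^2/24$, so $\int_M |W_+|^2\,d\mu$ is uniformly bounded as well; and since the underlying smooth oriented $4$-manifold is fixed as $\CP_2 \# 2\overline{\CP}_2$, the Euler characteristic and signature are fixed constants. Feeding the bounds on $\int s^2$ and $\int |W_+|^2$ into the Gauss--Bonnet and signature formulas then controls $\int |\mathring{r}|^2$ and $\int |W_-|^2$ uniformly, and hence bounds $\int_M |\Riem|^2\,d\mu$ uniformly. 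With all three inputs in hand, I would simply invoke the theorem of \cite{chenweb} to extract the desired convergent subsequence.

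The hard part is not in this argument at all but is already packaged inside \cite{chenweb}; what one must be careful about here is only to match the hypotheses of that theorem exactly---in particular, that it is the uniform \emph{Sobolev} bound, rather than a mere diameter or injectivity-radius bound, which rules out collapse, and that both the limiting orbifold structure and the extremal K\"ahler metric it carries are part of the cited conclusion rather than objects to be reconstructed by hand. I would also remark that the bilateral symmetry of the approximating metrics, being realized by an isometric $\ZZ_2$-action, passes to the Gromov--Hausdorff limit, so that the limiting orbifold metric is again bilaterally symmetric. This last observation is not needed for the statement as given, but it is exactly the extra structure that will be exploited when the possible orbifold singularities of the limit are analyzed, and ultimately excluded, in the sequel.
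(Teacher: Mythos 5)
Your proposal is correct and follows essentially the same route as the paper: Theorem \ref{leap} is presented there as a direct consequence of the compactness theorem of \cite{chenweb}, fed by the uniform Sobolev bound $C_S < 2\sqrt{3}$ established in \S\ref{sobstory}. Your explicit verification of the uniform $L^2$ curvature bound (via $\int s^2\,d\mu = 32\pi^2\mathcal{A}([\omega]) < 288\pi^2$, the K\"ahler identity $|W_+|^2 = s^2/24$, and the Gauss--Bonnet and signature formulas on the fixed manifold) is a hypothesis of \cite{chenweb} that the paper leaves implicit, so spelling it out is a point in your favor rather than a divergence.
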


Now in general, the limit orbifold can certainly be different from 
$M$; in particular, the construction of Arezzo-Pacard-Singer \cite{arpasing} 
shows that when $x\to 0$, the limit orbifold is the manifold $\CP_1\times \CP_1$,
whereas when $x\to \infty$, the limit orbifold is the manifold $\CP_2$. 
In order to prove Theorem \ref{big}, what we therefore need to do is 
rule
out the bubbling off of curvature and topology in the case of increasing sequences of 
$x\in (0,x_0]$. 

\section{Formation of Bubbles}

Without further work, the results of \cite{chenweb} only allow us to 
conclude that sequences of extremal metrics with bounded Sobolev
constant have orbifold limits in the Gromov-Hausdorff sense.
However, just as in the earlier work of Anderson \cite{andprior,andsd,ander34} and Tian-Viaclovsky
\cite{tvale,tv1}, 
 the orbifold singularities can only arise by a  very specific 
mechanism of curvature concentration.

Suppose we have a sequence of unit-volume extremal K\"ahler metrics
$g_i$, and let us also assume that we have upper and lower bounds of
their scalar curvatures:
$$
|s_{g_i}|<{\rm const}.
$$
Assuming the uniform Sobolev constant bounds, the curvatures of
these metrics becomes unboundedly large at a point only if the $L^2$
norm of curvature reaches a definite threshold on arbitrarily small
balls, in the   precise sense that,  there are universal 
constants $C,\epsilon_0>0$ (depending only on the dimension
and the Sobolev constant) so that, if we set
$\varrho=C|\Riem_p|^{-2}$, we then have
\begin{equation}
\int_{B_\varrho (p)}|\Riem|^2d\mu \geq \epsilon_0. \label{conclusion}
\end{equation}
If there is no uniform bound for the sectional curvatures of the 
$g_i$, we can choose points $p_i$ centered at points of large curvature, 
rescale so that  $|\Riem_{p_i}|=1$, and then  take a pointed limit of some subsequence. The limit
(which is called a bubble) will then be a complete extremal K\"ahler
orbifold of total energy $\int|\Riem|^2d\mu \geq \epsilon_0$.
Because the metric was rescaled at each stage by  factors tending
toward infinity, the scalar curvatures are commensurately multiplied by  factors
tending toward zero, and our assumed  uniform bounds on $s$ then imply that  the
limit orbifold is actually  scalar-flat K\"ahler.

The bubble's structure at infinity is also known. The Sobolev
constant bound implies a global Euclidean volume growth lower bound
on the orbifold. The $\epsilon_0$-regularity theorem for extremal
metrics asserts  that 
$$
\sup_{B_{\varrho/2}(p)}|\Riem|\leq {C}\varrho^{-2}\left(\int_{B(\varrho,p)}|\Riem|^2\right)^{1/2}
~~\text{whenever} ~~\int_{B_\varrho (p)}|\Riem|^2\le\epsilon_0 . 
$$
Since the total energy is bounded, points far enough away from the
basepoint must be at the center of large balls of small energy, and
so that  $|\Riem|=o(\rad^{-2})$, where $\rad$ denotes the Euclidean radius
in asymptotic coordinates; 
moreover,  since the bubbles that actually concern us here will actually be
 anti-self-dual $4$-manifolds,
we can appeal to the results of 
Tian and Viaclovsky \cite[Proposition 5.2]{tv2} to obtain 
the faster 
curvature fall-off 
  $|\Riem|=o(\rad^{-4+\delta})$, for any $\delta > 0$,  at infinity.
 Results of Anderson \cite{andsd} and 
Tian-Viaclovski \cite{tv1} also show   that the bubble  has only finitely many ends,
each of which is asymptotically locally Euclidean (ALE), meaning
each end is asymptotic to the  the standard cone metric on
$S^3/\Gamma$. In our case one can improve this even further, as 
results of Li and Tam \cite[Theorem 4.1]{lit92} \cite[Theorem 1.9]{lit95}
then imply that  an ALE K\"ahler manifold has just 
one end.\footnote{In real dimension $4$, this  assertion can  be seen  more
directly by first showing that 
such a manifold can  be compactified  into a compact complex surface
by adding a divisor of positive self-intersection at each end.
But for any compact complex surface,  the intersection form on
 $H^{1,1}$ 
 is  always \cite{bpv} either Lorentzian or negative definite, so any 
 two divisors of positive self-intersection must necessarily meet. 
 The existence of two or more ends would  thus  lead to an 
 immediate contradiction.}

Now even these rescaled metrics may have curvature concentration points; 
however, we can then repeat the above procedure by picking points
$q_i$ a finite distance from $p_i$ that have
$|\Riem_{q_i}|\rightarrow\infty$, and rescaling and taking another
pointed limit. The  above-cited regularity 
 theorem from \cite{chenweb} implies that each stage
must have a ball of large radius that, after removing the points of
curvature concentration, has at least $\epsilon_0$ worth of energy.
Since there is a finite amount of total energy available, this
process must eventually terminate with a single-ended, scalar-flat
ALE K\"ahler manifold. Any blow-up limit that yields a smooth metric
will be called a {\sl deepest bubble}. Obviously, no curvature
can bubble off at all unless such a deepest bubble can be constructed. 
Indeed, the absence of a deepest bubble would imply a uniform bound on sectional curvature, 
 leading to smooth convergence everywhere.  

Let us now specialize  to the special case of a sequence $g_i$ of unit-volume 
bilaterally symmetric extremal K\"ahler metrics on $M=\CP_2\# 2 \overline{\CP}_2$.
First notice that Lemma \ref{zest} asserts  that such metrics do in fact satisfy 
 a uniform scalar curvature bound, so  the 
above discussion does indeed apply. 
Since the curvature of any deepest bubble $(X, g_\infty)$
necessarily arises from a concentration of the curvatures of the $g_i$, we thus  have
the following: 

\begin{lem}\label{bub} 
Let $g_i$ be a sequence of metrics as in Theorem \ref{leap}. If 
$g_i$ fails to converge modulo diffeomorphisms in the smooth topology, then 
there is a non-trivial asymptotically locally Euclidean (ALE) 
scalar-flat K\"ahler  manifold 
$(X, g_\infty)$
which  arises as a pointed
Gromov-Hausdorff limit of rescalings of a subsequence  of the $g_i$.
Moreover, the trace-free Ricci curvature and anti-self-dual Weyl curvature $W_+$ of
 $X$ necessarily satisfy 
 \begin{eqnarray*}
\int_X |\mathring{r}|^2 d\mu_{g_\infty} &\leq& \limsup_{i\to \infty}  \int_M |\mathring{r}|^2 d\mu_{g_i}\\
\int_X |W_-|^2 d\mu_{g_\infty} &\leq& \limsup_{i\to \infty}  \int_M |W_-|^2 d\mu_{g_i}
\end{eqnarray*}
\end{lem}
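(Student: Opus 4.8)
Lemma \ref{bub} asserts that if a sequence of bilaterally symmetric extremal Kähler metrics fails to converge smoothly, then a nontrivial ALE scalar-flat Kähler bubble $(X,g_\infty)$ arises, and moreover that the $L^2$ norms of $\mathring{r}$ and $W_-$ on $X$ are bounded by the limsup of the corresponding integrals on $M$. Let me sketch how I would prove this.

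\medskip

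The plan is to separate the statement into its two halves: the \emph{existence} of the bubble and the \emph{curvature-energy inequalities}. The existence half has essentially been assembled already in the preceding discussion: Lemma \ref{zest} furnishes the uniform scalar-curvature bound $|s|V^{1/2} < 24\pi\sqrt{2}$ needed to enter the bubbling machinery, and the iterated rescaling-and-extraction argument described above—choosing points $p_i$ of maximal curvature, rescaling so that $|\Riem_{p_i}| = 1$, and passing to a pointed Gromov–Hausdorff limit—terminates after finitely many steps (since total energy is finite and each step extracts at least $\epsilon_0$ by the $\epsilon_0$-regularity theorem) in a deepest bubble $(X, g_\infty)$. The scalar-flatness follows because the rescaling factors tend to infinity, so the rescaled scalar curvatures tend to zero pointwise; combined with the $\epsilon_0$-regularity and the Li–Tam single-end conclusion, one obtains a nontrivial single-ended ALE scalar-flat Kähler manifold. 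So for the existence half I would simply invoke \cite{chenweb}, citing the regularity theorem and the curvature fall-off $|\Riem| = o(\rad^{-4+\delta})$ quoted above.

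\medskip

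The genuinely new content is the pair of energy inequalities, and this is where I would focus. The guiding principle is lower semicontinuity of $L^2$-curvature energy under the pointed rescaled convergence: the bubble is a blow-up limit, and away from the finitely many points of curvature concentration the rescaled metrics converge in $C^\infty$ on compact sets, so for any fixed compact $K \subset X$ one has $\int_K |\mathring{r}|^2 d\mu_{g_\infty} = \lim_i \int_{K_i} |\mathring{r}|^2 d\mu_{g_i}$ over the corresponding rescaled regions $K_i \subset M$, and these are bounded by the full integral $\int_M |\mathring{r}|^2 d\mu_{g_i}$ because the integrand is pointwise nonnegative and the integral over $M$ dominates the integral over any subregion. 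Taking $K \uparrow X$ and then a limsup over $i$ yields $\int_X |\mathring{r}|^2 d\mu_{g_\infty} \le \limsup_i \int_M |\mathring{r}|^2 d\mu_{g_i}$. The key point making this clean is that \emph{both} $|\mathring{r}|^2$ and $|W_-|^2$ are scale-invariant in real dimension four (each scales like curvature squared against the four-dimensional volume form), so the rescaling does not disturb these particular integrals, and the same extraction over exhausting compacta applies verbatim to $W_-$.

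\medskip

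The main obstacle, and the step deserving the most care, is ensuring that energy does not \emph{escape through the necks}—i.e. that energy concentrating in the intermediate annular regions between the base manifold and the deepest bubble is not double-counted or lost in a way that breaks the inequality. Here the direction of the inequality is in our favor: because the integrands are nonnegative, restricting to the bubble region can only \emph{lose} energy, so the one-sided bound $\le \limsup$ is exactly what the subregion-domination argument delivers, and no delicate neck analysis is required to obtain it (we would need such analysis only for an energy-\emph{identity}, which we do not claim). The one technical subtlety I would verify is that the $C^\infty_{\mathrm{loc}}$ convergence away from concentration points is genuinely uniform enough to pass the integrals to the limit on each compact $K$; this is guaranteed by the $\epsilon_0$-regularity theorem cited from \cite{chenweb}, which bounds $|\Riem|$ on balls of small energy and thereby gives the elliptic estimates underlying smooth convergence. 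With that in hand, the two inequalities follow by exhaustion and the nonnegativity of the integrands.
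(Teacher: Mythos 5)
Your proposal is correct and follows essentially the same route as the paper: the existence half is exactly the paper's iterated rescaling/deepest-bubble construction (uniform scalar curvature bound from Lemma \ref{zest}, $\epsilon_0$-regularity, finite total energy forcing termination, scalar-flatness from the divergent rescaling factors, Li--Tam for single-endedness), and the energy inequalities are obtained just as the paper intends, from the scale-invariance of $|\mathring{r}|^2\,d\mu$ and $|W_-|^2\,d\mu$ in dimension four together with nonnegativity of the integrands and $C^\infty_{\mathrm{loc}}$ convergence on an exhaustion of $X$. Indeed, the paper offers no separate proof beyond the remark that the bubble's curvature ``arises from a concentration of the curvatures of the $g_i$,'' so your write-up simply makes explicit what the paper leaves implicit.
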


\bigskip 

Because \cite{lebicm} scalar-flat K\"ahler surfaces are anti-self-dual 
as oriented Riemannian $4$-manifolds, the  
 following regularity observation therefore applies to  the present context.

\begin{prop} \label{reg} 
Let $(X,g_\infty)$ be any ALE  anti-self-dual $4$-manifold, where
$$g_{\infty} = \text{Euclidean} + o (\rad^{-2+ \delta}),  ~~~~~ \partial g_{\infty} = o (\rad^{-3+ \delta}), $$
for some $\delta \in (0,1/2)$, where $\rad$ denotes the Euclidean radius. 
Consider the orbifold compactification $\hat{X}$ of $X$ obtained by 
adding an extra point at each end of $X$. Then $\hat{X}$ carries 
a canonical real-analytic structure such that  the conformal class 
$[g_\infty]$ extends to $\hat{X}$ as a real-analytic anti-self-dual 
conformal metric $[\hat{g}]$. 
\end{prop}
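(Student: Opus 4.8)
The plan is to reduce the global compactification problem to a purely local, removable-singularity statement at each added point, and then to exploit the fact that the anti-self-duality equation $W_+=0$ is, after gauge-fixing, an \emph{analytic} elliptic system. Since $W_+$ is a conformal invariant, I would first use a conformal inversion to turn each ALE end into a punctured neighborhood of a point. Concretely, near an end modeled on $\mathbb{R}^4/\Gamma$ I would lift to the $\Gamma$-cover, work in asymptotic coordinates $x$ with $\rad=|x|\to\infty$, and introduce the inversion $\iota: w\mapsto x=w/|w|^2$. Because $\iota^*\delta = |w|^{-4}\delta$, the rescaled representative $\hat{g}:=|w|^4\,\iota^*g_\infty$ lies in the class $[g_\infty]$, and substituting the hypotheses $g_\infty=\text{Euclidean}+o(\rad^{-2+\delta})$ and $\partial g_\infty=o(\rad^{-3+\delta})$ gives, after a direct computation, $\hat{g}=\delta+o(|w|^{2-\delta})$ with $\partial\hat{g}=o(|w|^{1-\delta})$. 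For $\delta\in(0,1/2)$ this means $\hat{g}$ extends across the puncture $w=0$ as a $C^{1,\alpha}$ metric whose value at $0$ is the flat metric; moreover the conformally invariant finiteness of $\int|W|^2$ places $\hat{g}$ in $W^{2,2}$ on a full neighborhood. The added point is then an orbifold point modeled on $\mathbb{R}^4/\Gamma$, and the entire local problem may be run $\Gamma$-equivariantly on the cover, where $0$ is a genuine interior point of a ball in $\mathbb{R}^4$.

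Next I would fix a gauge converting the conformally invariant condition $W_+=0$ into a determined elliptic system. Since $W_+=0$ is elliptic only modulo the action of the diffeomorphism and conformal groups, I would impose $\hat{g}$-harmonic coordinates (removing the four diffeomorphism freedoms) together with a normalization of the conformal factor such as unit determinant against the flat background (removing the one conformal freedom); because $\hat{g}$ is $C^{1,\alpha}$-close to the flat metric near $0$, such a gauge exists and the coordinate change defining it extends across the point. In this gauge the anti-self-duality equation becomes a determined quasilinear elliptic system of the schematic form $\Delta\hat{g}=Q(\hat{g},\partial\hat{g})$, where $Q$ is a universal, hence real-analytic, expression in the metric, its inverse, and its first derivatives, and on the punctured neighborhood $\hat{g}$ is a smooth classical solution of this system.

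The heart of the argument is the removable-singularity step, upgrading $\hat{g}$ from a solution on the punctured ball to a weak solution across $0$. Here I would use that a point has zero $2$-capacity in real dimension four: testing the weak formulation of $\Delta\hat{g}=Q(\hat{g},\partial\hat{g})$ against a logarithmic (capacity) cutoff of a shrinking neighborhood of $0$, the boundary error terms vanish in the limit thanks to the $C^{1,\alpha}\cap W^{2,2}$ bounds, so no distributional source can be supported at $0$ and $\hat{g}$ solves the system weakly on the whole ball. Standard interior elliptic bootstrapping then promotes the weak solution to $C^\infty$, and since the system has real-analytic right-hand side, Morrey's theorem on analytic nonlinear elliptic systems upgrades this to real-analyticity; as in the classical analyticity argument for Einstein metrics, the harmonic coordinates are then themselves analytic and furnish a real-analytic atlas near the added point. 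Carrying the whole construction out $\Gamma$-equivariantly, the resulting $\Gamma$-invariant real-analytic structure and conformal class descend to the orbifold $\hat{X}$, producing the asserted canonical real-analytic anti-self-dual conformal metric $[\hat{g}]$.

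I expect the main obstacle to be the gauge/removable-singularity interface. The equation $W_+=0$ is elliptic only after quotienting by the conformal-diffeomorphism group, so one must simultaneously select a gauge that extends across the puncture and verify that the comparatively weak a priori regularity provided by the decay hypotheses ($C^{1,\alpha}$ together with $W^{2,2}$) suffices to exclude a concentrated distributional contribution at the added point. The exponent restriction $\delta<1/2$ enters precisely at this interface, guaranteeing both the Hölder regularity of the inverted metric and the integrability thresholds needed for the capacity argument to close.
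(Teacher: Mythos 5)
Your proposal takes a genuinely different route from the paper's. Both arguments begin with the same conformal inversion, producing a $C^{1,\alpha}$ extension of the conformal class across each added point (the paper takes $\alpha\in(1/2,1-\delta)$). From there, however, the paper never gauge-fixes the equation $W_+=0$ and never runs an elliptic bootstrap: it passes to the twistor space $Z=S(\Lambda^+)$ of a uniformizing chart, notes that the Atiyah--Hitchin--Singer almost-complex structure $J$ is of class $C^{0,\alpha}$ with $\alpha>1/2$, and invokes the Hill--Taylor rough Newlander--Nirenberg theorem: the Nijenhuis tensor of such a $J$ is a well-defined distribution of Sobolev regularity better than $L^2_{-1/2}$, and since it is supported on the added twistor fiber --- a proper submanifold of $Z$ --- it must vanish, so $Z$ is an honest complex manifold. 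Real-analyticity then comes for free from inverting the Penrose correspondence: the moduli space of rational curves with normal bundle ${\mathcal O}(1)\oplus{\mathcal O}(1)$ carries a holomorphic conformal structure whose real slice is the compactified conformal class. This is exactly where the hypothesis $\delta<1/2$ enters the paper (to get H\"older exponent above $1/2$ for Hill--Taylor). Your scheme is instead the direct PDE approach in the spirit of Tian--Viaclovsky's removable-singularity theorems; it is viable, and what the twistor argument buys is that the gauge/removability interface you correctly identify as the main obstacle never has to be confronted at all.

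That said, two steps of your argument are wrong as written. First, the claim that conformal invariance of $\int|W|^2$ places $\hat{g}$ in $W^{2,2}$: this finiteness is not among the hypotheses of the proposition (only decay of $g_\infty$ and $\partial g_\infty$ is assumed), and even granting it, it controls only the Weyl part of the curvature of $\hat{g}$; the Ricci curvature is not conformally invariant and is completely uncontrolled near the puncture, since no hypothesis constrains second derivatives of the metric. Fortunately the $W^{2,2}$ bound is also unnecessary: curvature is \emph{linear} in $\partial^2\hat{g}$ with coefficients depending only on $\hat{g}$, so $W_+(\hat{g})$ makes distributional sense in the divergence form $\partial\bigl(A(\hat{g})\,\partial\hat{g}\bigr)+B(\hat{g},\partial\hat{g})$ for $\hat{g}\in C^{1,\alpha}$, and a point-supported distribution of this regularity (necessarily a finite sum of derivatives of $\delta_0$) must vanish; your cutoff computation then closes using $C^{1,\alpha}$ alone. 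Second, the gauge-fixed equation is \emph{not} of the schematic form $\Delta\hat{g}=Q(\hat{g},\partial\hat{g})$: that is the shape of the Einstein equation in harmonic gauge (ten Laplace-type equations), whereas $W_+=0$ consists of only five equations. What is true is that $W_+=0$, the four harmonic-coordinate conditions, and your determinant normalization together form a mixed-order (Douglis--Nirenberg) elliptic system; its ellipticity is not automatic but is equivalent to exactness, at the symbol level, of the anti-self-dual deformation complex (conformal Killing operator followed by the linearized $W_+$), which is the precise content hiding behind your phrase ``elliptic modulo the conformal-diffeomorphism group'' and must be invoked or verified by hand. With these two repairs, the bootstrap and the appeal to Morrey's analyticity theorem go through, but the proof as written has genuine gaps at both points.
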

\begin{proof}
In inverted coordinates, the conformally related metric $\hat{g} = g_{\infty}\rad^{-4}$
is Euclidean $+ o(\varrho^{2-\delta})$, where $\varrho= 1/\rad$, so $g_{\infty}$
thus  determines a  
 $C^{1,\alpha}$ conformal metric $\hat{g}_0$ 
on the orbifold compactification $\hat{X}$ of $X$, for any
 $\alpha \in (1/2,  1 - \delta)$.  Let $Y\approx \RR^4$
 be a uniformizing chart for any end of $X$. 
  The Christoffel symbols of $\hat{g}_0$ 
 are thus of class $C^{0,\alpha}$, and the standard Atiyah-Hitchin-Singer 
 formulation of the twistor construction 
 \cite{AHS,lebicm} 
 \begin{center}
\mbox{
\beginpicture
\setplotarea x from 0 to 350, y from -10 to 155
\putrectangle corners at 75 130 and 300 70
\circulararc 360 degrees from 30 135 center at 30 105
\circulararc -70 degrees from 45 106  center at 37 106
\ellipticalarc axes ratio 3:1 -180 degrees from 60 105
center at 30 105
\ellipticalarc axes ratio 2:1 60 degrees from 194 25
center at 187 25
\ellipticalarc axes ratio 4:1 65 degrees from 193 106
center at 187 106
\arrow <2pt> [.1,.3] from 187 25 to 197 23
\arrow <2pt> [.1,.3] from 188 106 to 197 105
\arrow <2pt> [.1,.3] from 187 25 to 189 29
\arrow <2pt> [.1,.3] from 187 106 to 189 109
\arrow <2pt> [.1,.3] from 37 106  to 39 94
\arrow <2pt> [.1,.3] from 37 106  to 46 108
\put {\circle*{3}} [B1] at 188 25
\put {\circle*{3}} [B1] at 188 106
\put {\circle*{3}} [B1] at 38 106
\put {$S(\Lambda^+)$} [B1] at 320 100
\put {$Y$} [B1] at 320 10
\put {$\downarrow$} [B1] at 187 55
{\setlinear 
\plot  75 130 95 150  /
\plot 300 130 280 150 /
\plot  95 150 280 150  /
\plot 187 138 187 80 /
\plot 170 103 204 103 /
\plot 173 109 201 109 /
\plot 170 103 173 109 /
\plot 201 109 204 103 /
\plot 170 19 204 19 /
\plot 173 31 201 31 /
\plot 170 19 173 31 /
\plot 201 31 204 19 /
}
{\setquadratic
\plot  75 -10 85 21 95 30 /
\plot  75 -10 187 5 300 -10 /
\plot 300 -10 290 21 280 30 /
\plot 95 30 187 40 280 30 /
}
\endpicture
}
\end{center}
gives us an almost-complex structure $J$ on the $6$-manifold $Z$
defined as the $2$-sphere bundle $S(\Lambda^+)\to Y$. We now 
apply the Hill-Taylor version \cite{hiltay} of the Newlander-Nirenberg theorem 
for rough almost-complex structures. Since $J$ is an almost-complex 
structure of class $C^{0,\alpha}$, $\alpha > 1/2$,  its Nijenhuis tensor ${\mathcal N}_J$ is
not only well-defined in the distributional sense, but  actually  
\cite[Lemma 1.2, Remark 1.5]{hiltay}
of  Sobolev regularity better than $L^2_{-1/2}$. But since $[\hat{g}_0] = [g_\infty ]$
is anti-self-dual away from the added point at infinity, ${\mathcal N}_J$ is supported
at the added twistor fiber, which is a submanifold of $Z$. However, 
a   distribution of Sobolev class $L^2_{-1/2}$ that is supported on a real hypersurface
is automatically  zero, as follows from  \cite[Chapter 1, Theorem 11.1]{lima} and duality. 
Thus it follows  that ${\mathcal N}_J=0$ in the distributional sense, and 
$(Z,J)$ is therefore \cite[Theorem 1.1]{hiltay} a complex manifold. 
Inverting the twistor correspondence \cite{hitka,lebtour,pnlg} therefore realizes $Y$ as the
real slice of the moduli space of rational curves $\CP_1 \subset Z$ of
with normal  bundle ${\mathcal O} (1) \oplus {\mathcal O} (1)$; and, more importantly,
this moduli space carries
an anti-holomorphic involution fixing $Y$  and 
a natural holomorphic conformal structure   whose restriction to $Y$ is $[\hat{g}_0]$. 
In particular, $Y$ can be given a real-analytic structure in which $[\hat{g}_0]$
is represented by a real-analytic metric $\hat{g}$. Dividing $Y$ by the appropriate
group $\Gamma$ 
now gives us an orbifold chart associated with the given end, 
and repeating the same argument for each end then gives 
$\hat{X}$ the promised structure. 
\end{proof}

Notice that the results of Tian and Viaclovsky \cite{tv2}   tell us that 
the hypotheses of this proposition hold whenever an anti-self-dual 
manifold $(X,g_\infty)$ arises as a bubble. 
This has many useful consequences: 

\begin{prop} \label{neg} 
Let $(X,g_\infty)$ be any scalar-flat ALE  anti-self-dual $4$-manifold, where
$$g_{\infty} = \text{Euclidean} + o (\rad^{-2+ \delta}),  ~~~~~ \partial g_{\infty} = o (\rad^{-3+ \delta}), $$
for some $\delta \in (0,1/2)$, where $\rad$ denotes the Euclidean radius. 
Then $X$ has negative intersection form.
 Moreover, after possibly passing to better
charts at infinity, $g_\infty$ actually satisfies the improved fall-off conditions
$$g_{\infty} = \text{Euclidean} + O (\rad^{-2}),  ~~~~~ \partial^m g_{\infty} = 
O (\rad^{-2-m}) ~\forall  m\geq 1.$$
In particular, these conclusions apply to any deepest bubble $(X,g_\infty)$ arising 
as in Lemma \ref{bub}. \end{prop}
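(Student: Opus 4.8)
The plan is to handle the two assertions by different tools: the negative-definiteness of the intersection form comes from Weitzenböck coupled with the $L^2$ Hodge theory of ALE manifolds, whereas the improved fall-off comes from the real-analytic compactification furnished by Proposition \ref{reg}.

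For the intersection form, I would first invoke the Hodge theory for ALE manifolds developed in the cited work of Anderson and Tian--Viaclovsky to identify $H^2(X,\RR)$---which carries the intersection form, the ends $S^3/\Gamma$ being rational homology spheres---with the space of $L^2$ harmonic $2$-forms, decomposed orthogonally into its self-dual and anti-self-dual parts. Since $\int_X\phi\wedge\phi=\pm\int_X|\phi|^2\,d\mu_{g_\infty}$ according to whether $\phi$ is self-dual or anti-self-dual, the intersection form is positive on the self-dual summand and negative on the other, so it suffices to show that there are no nonzero self-dual $L^2$ harmonic $2$-forms. For such a form $\phi$ the Weitzenböck formula on $\Lambda^+$ reads $\Delta\phi=\nabla^*\nabla\phi+\frac{s}{3}\phi-2W_+(\phi)$; because $(X,g_\infty)$ is scalar-flat and anti-self-dual, both curvature terms vanish, and integrating $\langle\Delta\phi,\phi\rangle$ by parts---legitimate since the decay hypotheses make the boundary contributions disappear---forces $\nabla\phi=0$. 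A parallel form has constant pointwise norm, while an ALE manifold has infinite (Euclidean-growth) volume, so the only parallel $L^2$ form is zero. Hence the self-dual part vanishes and the intersection form is negative definite.

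For the improved fall-off I would work on the orbifold compactification $\hat X$ of Proposition \ref{reg}, on which $[g_\infty]$ extends as a real-analytic anti-self-dual conformal metric. In the inverted coordinates near each added point $p_\infty$ one has $g_\infty=\rad^4\hat g$, and the real-analyticity of a representative $\hat g$ at $p_\infty$ forces $g_\infty$ to admit an asymptotic expansion in \emph{integer} powers of $\rad^{-1}$, with no fractional or logarithmic terms. The hypothesis $g_\infty=\text{Euclidean}+o(\rad^{-2+\delta})$ with $\delta\in(0,1/2)$ already excludes any term decaying as slowly as $\rad^{-1}$, so the first admissible nonzero term in this integer-power expansion sits at order $\rad^{-2}$; this yields $g_\infty=\text{Euclidean}+O(\rad^{-2})$, and differentiating the expansion term-by-term gives $\partial^m g_\infty=O(\rad^{-2-m})$ for all $m\geq 1$. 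Choosing the chart adapted to the conformal normal coordinates of $\hat g$ at $p_\infty$ is what removes any residual lower-order discrepancy, which accounts for the phrase ``after possibly passing to better charts.'' The final sentence is then immediate: the results of Tian and Viaclovsky ensure that the fall-off hypotheses hold for any bubble, so any deepest bubble of Lemma \ref{bub}---being scalar-flat K\"ahler, hence anti-self-dual---meets the hypotheses and inherits both conclusions.

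I expect the main obstacle to be the second part: converting the qualitative real-analyticity of the compactified conformal structure into the precise integer-power asymptotic expansion of $g_\infty$, and in particular justifying that the leading correction lies exactly at order $\rad^{-2}$ once the chart has been adapted to the real-analytic structure at infinity. By contrast, the negative-definiteness argument is a clean application of Weitzenböck and the infinite-volume property, once the ALE Hodge theory identifying $H^2$ with $L^2$ harmonic forms is taken as given.
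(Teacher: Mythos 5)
Your argument splits into two halves of very different quality, so let me address them separately.

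\textbf{The intersection form.} Your argument here is correct, but it is genuinely different from the paper's. The paper never invokes $L^2$ Hodge theory on the noncompact space $X$: instead it works on the compact orbifold $\hat{X}$ furnished by Proposition \ref{reg}, chooses a real-analytic representative $\hat{g}$ of the conformal class whose scalar curvature has a fixed sign, writes $g_\infty = u^2\hat{g}$, and uses the equation $(\Delta + \frac{s}{6})u = 0$ (forced by scalar-flatness of $g_\infty$) together with the minimum principle to conclude $s_{\hat{g}}>0$. Ordinary Hodge theory on the \emph{compact} orbifold then represents $H^2(\hat{X})$ by $\hat{g}$-harmonic forms, and the Weitzenb\"ock formula with $W_+=0$ and $s_{\hat g}>0$ kills the self-dual parts; finally $H^2(X)=H^2_c(X)=H^2(\hat{X})$ transfers the conclusion to $X$. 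Your route trades the positivity of $s_{\hat g}$ for the infinite-volume trick (parallel $L^2$ forms vanish), at the price of importing the identification of $H^2(X,\RR)$ with $L^2$-harmonic $2$-forms on an ALE space, together with the statement that these compute the intersection pairing. That input is true (the ends are rational homology spheres, so $H^2_c\to H^2$ is an isomorphism), but it is a substantive theorem, and it is not clearly available in the precise form you need in the references you name; the paper's detour through the compactification is designed precisely to stay within compact (orbifold) Hodge theory. Note also that the paper's computation of $s_{\hat g}>0$ is not wasted effort: it is reused in the second half of the proof, which is exactly where your argument breaks down.

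\textbf{The improved fall-off.} Here there is a genuine gap. You write that near the added point one has $g_\infty = \rad^4\hat{g}$ with $\hat{g}$ real-analytic; but the metric $\rad^{-4}g_\infty$ obtained by naive inversion is only the $C^{1,\alpha}$ metric $\hat{g}_0$ of Proposition \ref{reg}, \emph{not} the real-analytic representative. The real-analytic metric $\hat{g}$ in the conformal class is produced abstractly (via the twistor construction), and all one knows a priori is that $g_\infty = u^2\hat{g}$ on $X$ for some positive function $u$ blowing up at the added point. Real-analyticity of $\hat{g}$ by itself says nothing about an integer-power expansion of $g_\infty$: everything hinges on the regularity and expansion of the conformal factor $u$, which your sketch never controls. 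This is exactly where scalar-flatness must be used a second time: since $s_{g_\infty}=0$, the factor $u$ satisfies the linear equation (\ref{green}), hence (using $s_{\hat g}>0$, so that $\Delta+\frac{s}{6}$ has no kernel) $u$ is a superposition of Yamabe Green's functions of the ends. The decisive input is then Atiyah's theorem \cite{atgrn} that on an anti-self-dual background the Green's function is real-analytic in a real-analytic conformal gauge and has the local expansion $\frac{1}{4\pi^2\varrho^2}+\text{bounded}$ with \emph{no} $\log\varrho$ term; inverting geodesic coordinates of $\hat{g}$ about the added point then yields $O(\rad^{-2})$ fall-off with the stated derivative bounds. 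The possible $\log$ terms are precisely the obstruction to the integer-power expansion you posit, and nothing in your argument rules them out. You correctly flag this as ``the main obstacle,'' but identifying an obstacle is not the same as overcoming it: the missing ideas are the Yamabe equation for $u$ and Atiyah's Green's function theorem.
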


\begin{proof}
The real-analytic conformal class $[\hat{g}]$ can be represented by a
real-analytic metric $\hat{g}$ whose scalar-curvature $s$ doesn't change sign; 
for example, such a metric can be constructed via Trudinger's trick of 
rescaling by the lowest eigenfunction of the Yamabe Laplacian. 
Setting $g_\infty = u^2 \hat{g}$ for $u> 0$, one then finds that
$u$ is real-analytic and proper on $\hat{X}-X$, 
and solves the equation
 \begin{equation} \label{green} 
0 = (\Delta + \frac{s}{6}) u,
\end{equation}
so that we must have $s > 0$ by examination of the minima of $u$. 

Let us now represent the deRham groups $H^2(\hat{X})$ by harmonic $2$-forms 
with respect to $\hat{g}$. Letting  $\varphi$ be any such harmonic $2$-form, 
its  self-dual part $\varphi^+ = (\varphi + \star \varphi )/2$ then satisfies the  
Weitzenb\"ock formula 
$$
0 = (d+d^*)^2 \varphi^+ = \nabla^*\nabla \varphi^+ - 2 W(\varphi^+ , \cdot ) + \frac{s}{3} \varphi^+ =
\nabla^*\nabla \varphi^+ +\frac{s}{3} \varphi^+ . 
$$
Taking the inner product with $\varphi^+$ and integrating, we thus conclude that 
$\varphi^+$ vanishes. Hence any harmonic form on $\hat{X}$ is anti-self-dual,
and the intersection form of $\hat{X}$ is negative. Since $H^2(X) = H^2_c(X)=H^2(\hat{X})$,
the same therefore applies to our original manifold $X$. 

Now (\ref{green}) reveals that $u$ is in fact a linear superposition
of the Yamabe Green's functions of the ends. Since the Yamabe Green's function of 
an anti-self-dual manifold is real-analytic in any real-analytic conformal gauge, and
has a local expansion \cite{atgrn} 
$$G_y = \frac{1}{4\pi^2 \varrho^2} + \text{bounded} $$
without $\log (\varrho )$ term,  the improved asymptotic expansion for $g_\infty$ now
follows by inverting geodesic coordinates about each  point of 
$\hat{X}-X$. 
\end{proof}

This said, we now immediately have the following: 

\begin{lem}\label{bub2}
Let $g_i$ and $(X, g_\infty)$ be as in Lemma \ref{bub}. Then 
$X$ is  diffeomorphic to an open subset of $M$. Moreover, 
$b_1(X) = b_3 (X) =0$,  and $b_2 (X) \leq 2$.
\end{lem}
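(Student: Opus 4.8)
The plan is to establish the three assertions about $X$ by leveraging the structural results just proved about deepest bubbles, together with the fact that $X$ arises as a limit of rescalings of metrics on $M$.

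\medskip

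\noindent\textbf{Diffeomorphism type.} First I would argue that $X$ embeds smoothly into $M$. The intuition is that a deepest bubble is a blow-up limit obtained by rescaling the $g_i$ near a point of curvature concentration; away from the finitely many points where curvature bubbles off, the convergence of a subsequence is smooth and non-degenerate. Concretely, a large but fixed geodesic ball around the basepoint in the rescaled metric converges smoothly to a corresponding region of $X$, and this region sits inside $M$ as the image of a ball of small radius (in the original scale) minus the concentration points. Since $X$ has a single ALE end (as established via Li--Tam in the discussion preceding Lemma \ref{bub}) and Proposition \ref{neg} gives the decay $g_\infty = \text{Euclidean} + O(\rad^{-2})$, the topology of $X$ is exhausted by such large balls, so $X$ is diffeomorphic to the open subset of $M$ onto which these balls map. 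I expect the routine-but-careful point here to be matching the smooth convergence on compact sets with the single-endedness so that no topology is lost at infinity.

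\medskip

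\noindent\textbf{Vanishing of $b_1$ and $b_3$.} For the odd Betti numbers I would use the asymptotic geometry together with the Weitzenb\"ock machinery already in play. Since $X$ is a single-ended ALE $4$-manifold, Poincar\'e--Lefschetz duality gives $b_3(X) = b_1^c(X) = 0$ once one knows $X$ is connected with connected end, so the substantive claim is $b_1(X)=0$. On the orbifold compactification $\hat X$, which by Proposition \ref{reg} carries a real-analytic anti-self-dual conformal structure, I would represent $H^1$ by harmonic $1$-forms and apply the Weitzenb\"ock formula $0 = \nabla^*\nabla \alpha + \Ric(\alpha)$; since Proposition \ref{neg} shows the compactified metric can be taken with $s>0$, and scalar-flat K\"ahler ALE spaces are Ricci-controlled, a positivity argument forces any harmonic $1$-form to vanish. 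Alternatively, and perhaps more cleanly, one invokes that $X$ is simply connected at infinity with $b_1$ controlled by the finite energy, so $b_1(X)=0$; I would pick whichever of these the earlier cited results of Anderson and Tian--Viaclovsky make immediate.

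\medskip

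\noindent\textbf{The bound $b_2(X)\leq 2$.} This is where I expect the main obstacle to lie, and it is the one genuinely quantitative assertion. The strategy is to combine the negative-definiteness of the intersection form from Proposition \ref{neg} with the energy inequalities of Lemma \ref{bub}. By Proposition \ref{neg} the intersection form is negative definite, so $b_2(X) = b_2^-(X)$, and the signature satisfies $\tau(X) = -b_2(X)$. I would then feed $\tau(X)$ into the Gauss--Bonnet and signature integrals for the ALE space, using that $X$ is scalar-flat and anti-self-dual so that $W_+ \equiv 0$ and only $|\mathring r|^2$ and $|W_-|^2$ contribute; the ALE boundary terms are controlled by the single-ended structure and the improved decay. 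The energy bounds of Lemma \ref{bub} then cap $\int_X |W_-|^2$ and $\int_X|\mathring r|^2$ by the corresponding integrals over $M$, which in turn are bounded using $c_1^2(M)=7$ together with the curvature estimate $\mathcal A([\omega]) < 9$ from \S\ref{sobstory}. Tracking the constants through the signature formula should pin $b_2(X)$ below a value that, being an integer, forces $b_2(X)\leq 2$. The delicate part will be getting the ALE correction terms and the normalizing constants in the Gauss--Bonnet/signature formulas exactly right, since it is precisely the sharpness of these constants that yields the integer bound rather than a weaker estimate.
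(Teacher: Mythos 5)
Your first assertion (that $X$ embeds as an open subset of $M$ via smooth convergence on compact sets, using the bounded rescaled curvature and the single ALE end) is essentially the paper's argument. The genuine gap is in how you treat the Betti numbers. For $b_1$, the Weitzenb\"ock formula for harmonic $1$-forms involves the \emph{Ricci} curvature, not the scalar curvature: the zeroth-order term in $0=\nabla^*\nabla\alpha + \mathrm{Ric}(\alpha,\cdot)$ has no sign unless Ricci is nonnegative, and neither the scalar-flat K\"ahler metric $g_\infty$ nor the positive-scalar-curvature representative $\hat{g}$ on $\hat{X}$ comes with any Ricci control. In fact no intrinsic argument of the kind you propose can exist: $S^1\times S^3$ is conformally flat (hence anti-self-dual) with positive scalar curvature and $b_1=1$, so anti-self-duality plus $s>0$ never forces $b_1=0$. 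This failure then undermines your $b_2$ bound as well, since the orbifold Gauss--Bonnet formula (the paper's (\ref{gammab})) computes $\chi(X)-1/|\Gamma|$, and extracting $b_2$ from $\chi$ requires precisely the vanishing $b_1=b_3=0$ that you have not established. A further caution: the version of your argument that runs through the \emph{signature} theorem cannot work even in principle, because there the boundary correction is an eta invariant of $S^3/\Gamma$, which is unbounded as $|\Gamma|$ grows (the hyper-K\"ahler $A_k$ ALE spaces illustrate this), unlike the tame $1/|\Gamma|$ correction in Gauss--Bonnet.

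The idea you are missing is that the embedding $X\subset M$ should be exploited \emph{topologically}, not only to transfer energy. Since $X$ is the interior of a compact domain $U$ with $\partial U = S^3/\Gamma$, one obtains a decomposition $M\approx U\cup_{S^3/\Gamma}V$. Because $M=\CP_2\# 2\overline{\CP}_2$ is simply connected and $S^3/\Gamma$ is a rational homology sphere, the Mayer--Vietoris sequence immediately yields $b_1(U)=b_3(U)=0$ and $H^2(M,\RR)=H^2(U,\RR)\oplus H^2(V,\RR)$, so the intersection form of $X\approx\Int(U)$ is the restriction of the intersection form of $M$ to the subspace $H^2(U)$. Since that form is negative definite by Proposition \ref{neg} and $b_-(M)=2$, it follows at once that $b_2(X)\leq 2$; no curvature integrals are needed beyond Proposition \ref{neg} itself. (Your energy-based route to $b_2\leq 2$ is in fact salvageable: using ${\mathcal A}<9$ one gets $\frac{1}{8\pi^2}\int_M|W_-|^2<3$, and then (\ref{gammab}) together with Lemma \ref{bub} gives $b_2(X)< 2+1/|\Gamma|$, hence $b_2(X)\leq 2$ --- but only \emph{after} the odd Betti numbers have been killed, which is exactly the step your proposal cannot supply.)
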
 
\begin{proof}
The bubble $(X, g_\infty)$ is obtained as a pointed Gromov-Hausdorff
limit of rescaled versions of small metric balls in $M$, and the rescaling
is done in such a manner as to arrange that the sectional curvatures
are bounded. One therefore gets smooth convergence  on compact
subsets by passing to a subsequence and applying suitable diffeomorphisms. 
But $X$ is diffeomorphic to the interior of a compact domain  $U\subset X$ 
with smooth boundary $S^3/\Gamma$. This  domain 
 can then be mapped diffeomorphically into the manifold,  resulting in 
a decomposition  
\begin{equation}
\label{pieces} 
M\approx U\cup_{S^3/\Gamma}V
\end{equation}
where $U$ and $V$ are manifolds-with-boundary,
$X\approx \Int (U)$, 
 and  
$\partial U=\partial V=  S^3/\Gamma$.
Since  $M= \CP_2 \# 2 \overline{\CP}_2$ is simply connected, 
  the Mayer-Vietoris sequence tells us that 
 both $U$ and $V$ have $b_1=b_3=0$, while 
 $$
 H^2(M, \RR ) = H^2(U, \RR) \oplus H^2 (V, \RR) . 
 $$
Since the analogous statements similarly hold for  homology, 
the intersection form of $X\approx \Int (U)$ is just the restriction of 
the intersection form of $H^2 (M)$ to the linear subspace $H^2(U)\subset H^2 (M)$. 
But the intersection form of $X$ is negative by Proposition \ref{neg},
and it thus follows that $b_2 (X) \leq b_-(M) = 2$. 
 \end{proof}

The following will also prove quite useful.

\begin{lem} \label{puck} 
Let $g_i$ and $(X, g_\infty)$ be as in Lemma \ref{bub}.
If the open subset of Lemma \ref{bub2} cannot be taken to be invariant 
under  under $F_1\leftrightarrow F_2$,
then curvature is accumulating in more than one region, and 
\begin{eqnarray*}
2\int_X |\mathring{r}|^2 d\mu_{g_\infty} &\leq&\limsup_{i\to \infty}  \int_M |\mathring{r}|^2 d\mu_{g_i}\\
2\int_X |W_-|^2 d\mu_{g_\infty} &\leq& \limsup_{i\to \infty}  \int_M |W_-|^2 d\mu_{g_i}.
\end{eqnarray*}
\end{lem}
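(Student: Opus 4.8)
The plan is to exploit the $\ZZ_2$-symmetry of the approximating metrics $g_i$ together with the locality of curvature concentration. The key structural fact is that each $g_i$ is bilaterally symmetric, hence invariant under an isometric involution $\sigma_i$ realizing $F_1\leftrightarrow F_2$. The hypothesis of the lemma is precisely that no choice of basepoints $p_i$ producing the bubble $(X,g_\infty)$ can be made $\sigma_i$-invariant; that is, the region where curvature concentrates to form $X$ is \emph{not} carried to itself by the symmetry. I would first argue that in this situation the image region $\sigma_i(B_{\varrho_i}(p_i))$ is a \emph{second}, disjoint locus of comparable curvature concentration, for all large $i$.

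Concretely, the first step is to note that since $\sigma_i$ is an isometry of $g_i$, it preserves all curvature quantities pointwise; in particular $|\Riem_{g_i}|\circ\sigma_i=|\Riem_{g_i}|$, so $\sigma_i(p_i)$ is also a point of large curvature, and the rescaled pointed limit based at $\sigma_i(p_i)$ is isometric to the \emph{same} bubble $(X,g_\infty)$. The second step is to show that the two bubbling regions are asymptotically disjoint: if $\sigma_i(p_i)$ remained within bounded rescaled distance of $p_i$ for a subsequence, then the limiting isometry $\sigma_\infty$ of $X$ would give $X$ itself an involution, and one could then pull this symmetry back to choose $\sigma_i$-invariant basepoints — contradicting the hypothesis. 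Hence $d_{g_i}(p_i,\sigma_i(p_i))\to\infty$ after rescaling, so the balls $B_{\varrho_i}(p_i)$ and $\sigma_i(B_{\varrho_i}(p_i))$ become disjoint.

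Given two disjoint regions each converging (after rescaling) to $(X,g_\infty)$, the curvature integrals add. The third step is therefore a lower-semicontinuity/mass-accounting argument: the energies $\int|\mathring r|^2$ and $\int|W_-|^2$ are scale-invariant in dimension four, and by the same Fatou-type estimate already used to prove Lemma \ref{bub}, each of the two disjoint concentration regions contributes at least $\int_X|\mathring r|^2\,d\mu_{g_\infty}$ (resp. $\int_X|W_-|^2\,d\mu_{g_\infty}$) to the limsup of the corresponding integral over $M$. Since the contributions come from disjoint sets, they are additive, yielding the factor of $2$ in both displayed inequalities. The fact that ``curvature is accumulating in more than one region'' is then just the statement that $p_i$ and $\sigma_i(p_i)$ give genuinely distinct concentration loci.

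The main obstacle I expect is the second step — rigorously ruling out that the symmetry-image basepoints stay at bounded rescaled distance. One must argue that if $\sigma_i(p_i)$ does not escape to infinity in the rescaled picture, then the limiting involution $\sigma_\infty$ acts on $X$ (as an isometry of $g_\infty$) and that its presence contradicts the non-invariance hypothesis: specifically, one recenters the bubble at a $\sigma_\infty$-fixed point or at the $\sigma_\infty$-orbit midpoint to manufacture $\sigma_i$-invariant basepoints $p_i'$ yielding an equally valid (invariant) bubble, which is exactly what the hypothesis forbids. Making the passage from the isometry $\sigma_\infty$ of the limit back to invariant basepoints at finite stage precise — and handling the possibility that $\sigma_\infty$ has no fixed point on the noncompact $X$ — is the delicate point; everything else is the standard disjoint-support additivity of scale-invariant energies.
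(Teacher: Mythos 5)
Your proposal is correct and follows essentially the same argument as the paper: a dichotomy on whether the rescaled distance between the concentration region and its $F_1\leftrightarrow F_2$-image stays bounded along a subsequence, with an induced isometry of the pointed limit in the bounded case (negating the hypothesis) and disjoint additivity of the scale-invariant energies giving the factor of $2$ in the unbounded case. The delicate point you flag largely dissolves if, as the paper does, you phrase everything in terms of invariant \emph{domains} rather than invariant \emph{basepoints}: in the bounded case one simply takes the union $U_j\cup U_j'$ of the concentration domain and its image as the invariant region whose rescaled limit realizes $X$, so no fixed point (or orbit midpoint) of the limiting involution is ever required.
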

\begin{proof}
If we go far out in the sequence, the deepest bubble essentially arises by rescaling 
the interior of a  domain $U_j$ of small diameter, where the $L^2$ norms of curvatures
on  $U_j$ closely approximate the corresponding norms for the deepest bubble $X$. 
Now move this domain by the isometry  $F_1\leftrightarrow F_2$
to obtain another domain $U_j^\prime$. If, after again passing to a subsequence, the 
rescaled distance from $U_j$ to $U_j^\prime$ remains bounded, the pointed 
Gromov-Hausdorff limit will include the limits of the $U_j^\prime$, and 
we will have an induced isometry which exchanges the two. Otherwise, 
the concentration of curvature represented by the $U_j \cup U_j^\prime$ is
 reflected, not by $(X, g_\infty)$, but rather by disjoint  two copies of it, and we 
 therefore get a factor of two in the relevant curvature inequalities. 
\end{proof}

\begin{prop}\label{toric} 
Let  $(X, g_\infty)$ be a deepest bubble, as in Lemma \ref{bub}. 
Then $g_\infty$ is toric,  and $H_2(X)$ is generated
by holomorphically embedded $\CP_1$'s in $X$.  
\end{prop}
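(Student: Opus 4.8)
The plan is to transport the isometric torus symmetry of the approximating metrics to the bubble, and then to read off the complex geometry of the bubble from that symmetry. Each $g_i$ is invariant under the $2$-torus $T^2\subset \Aut_0(M)$ fixed in \S\ref{action}, and by Lemma \ref{bub} the bubble $(X,g_\infty)$ is produced by rescaling the $g_i$ by factors $\lambda_i\to\infty$ about basepoints $p_i$; since these rescalings commute with the isometric $T^2$-action, I would pass to the limit equivariantly. After extracting a subsequence, the pointed manifolds $(M,\lambda_i^2 g_i,p_i)$ together with their $T^2$-actions converge in the pointed equivariant Gromov--Hausdorff sense to $(X,g_\infty,p_\infty)$, carrying an isometric action of a closed limit group $G\subseteq \mathrm{Isom}(X,g_\infty)$. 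Because the integrable complex structures converge as well, $G$ acts holomorphically, so it remains only to identify $G_0$ as an effective holomorphic $T^2$.

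First I would observe that $G_0$ is a torus of rank at most $2$: an ALE metric admits no asymptotic translations, since any isometry is asymptotic to an isometry of the cone $\RR^4/\Gamma$ fixing its vertex and hence lies asymptotically in $SO(4)$; thus $\mathrm{Isom}_0(X,g_\infty)$ is compact, and $G_0$, being compact, connected, abelian, and of dimension at most $2$ (dimension cannot increase under equivariant limits), is a torus of rank $r\le 2$. The crux---and the step I expect to be the main obstacle---is to prove that $r=2$, i.e. that no circle factor degenerates. There are exactly two ways a circle subgroup could fail to survive, and I would rule out each. If the orbits of some circle through $p_i$ had rescaled diameters tending to $\infty$, the limit would inherit a free translational $\RR$-symmetry, which the ALE condition has just excluded. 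If instead such orbits collapsed, with rescaled diameters tending to $0$, then an isometric circle would be shrinking while the curvature stays bounded, forcing a local collapse; but the uniform Sobolev bound of \S\ref{sobstory} gives a definite lower bound on the volumes of unit balls (Euclidean volume growth), so the four-dimensional, non-collapsed limit $X$ cannot arise this way. With both degenerations excluded, the full effective $T^2$-action passes to $(X,g_\infty)$, and $g_\infty$ is therefore toric.

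It then remains to describe $H_2(X)$. Being toric, $X$ is a rational complex surface, and the closures of the one-dimensional $T^2$-orbits are rational curves whose compact members are holomorphically embedded $\CP_1$'s. Contracting all the compact curves of $X$ (legitimate by the Grauert--Remmert criterion, since the intersection form is negative definite by Proposition \ref{neg}) produces a normal Stein surface which, by the ALE asymptotics of Proposition \ref{neg}, is biholomorphic to the quotient singularity $\CC^2/\Gamma$; thus $X$ is the resolution of $\CC^2/\Gamma$, and, the action being toric, $\Gamma$ is cyclic and the exceptional set is a Hirzebruch--Jung string of invariant $\CP_1$'s. Since $b_2(X)\le 2$ (Lemma \ref{bub2}) this string is short, and the standard fact that the classes of the exceptional components of a resolution freely generate its second homology shows that these invariant $\CP_1$'s generate $H_2(X)$. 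This establishes the proposition.
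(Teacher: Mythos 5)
Your strategy follows the paper's in outline (transport the torus symmetry to the bubble, then use it to identify $H_2$), but the step you yourself call the crux --- that the full $T^2$ survives in the limit --- is where the argument genuinely fails. Your enumeration of degeneration modes is incomplete, and both exclusions are unsound. The serious omission: both circle factors can survive individually while the rank of the limit group still drops, because the two commuting Killing fields $\eta_1^i,\eta_2^i$ may become asymptotically proportional on the concentration region, i.e.\ $\eta_2^i=c_i\eta_1^i+\zeta_i$ with $\zeta_i\to 0$ uniformly on compact sets of the rescaled pointed manifolds. In that case every fixed circle subgroup of $T^2$ has the \emph{same} limit circle, so the naive limit of the action is one-dimensional; recovering a second, independent symmetry requires a renormalization argument (following elements like $\exp(T_i\zeta_i)$ with $T_i\sim\|\zeta_i\|^{-1}$), which you never make. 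This is exactly the difficulty the paper's proof is engineered to handle: Killing fields are identified with $D$-parallel sections of $\Lambda^1\oplus\Lambda^2$ for a suitable connection $D$, the $2$-planes $\Pi_i\subset\Lambda^1_{p_i}\oplus\Lambda^2_{p_i}$ of $1$-jets converge in the \emph{compact} Grassmannian, and $D$-parallel transport of the limit plane produces two genuinely independent commuting Killing fields on $X$ no matter how the original generators degenerate. Moreover, your second degeneration mode (collapsing orbits through $p_i$) is not a degeneration at all, and your ``exclusion'' of it proves something false: when curvature concentrates at a fixed point of the torus action --- the typical situation, which certainly cannot be ruled out --- the orbits through $p_i$ have diameter tending to zero, yet there is no collapse and the action survives perfectly well; small basepoint orbits only signal proximity to the fixed-point locus, and force collapse only if \emph{all} orbits in balls of definite size shrink, a different statement. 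Your first mode's exclusion is also specious: if orbit diameters through $p_i$ diverge, the fixed-time isometries simply fail to subconverge (they escape to infinity), so no translational $\RR$-symmetry of $X$ is ever produced and nothing contradicts ALE-ness; what actually controls this scenario is $\epsilon$-regularity plus finite total energy, since an invariant curvature of size $\sim 1$ along an orbit of diverging rescaled length would cost unbounded energy.

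Two lesser issues. First, you assert compactness of the limit group from ``ALE implies compact isometry group''; the paper does not take this for granted, but proves compactness via Pontecorvo's theorem (a scalar-flat K\"ahler bubble cannot be conformally flat), the Ferrand--Lelong/Obata theorem on the orbifold compactification $\hat{X}$, and the resulting faithful $SO(4)$-representation at the added point. Second, your treatment of $H_2(X)$ diverges from the paper's: you contract the compact curves by Grauert's criterion and claim the result is biholomorphic to $\CC^2/\Gamma$, making $X$ a toric resolution with a Hirzebruch--Jung exceptional string. That route could be made to work, but the identification of the contracted Stein surface with $\CC^2/\Gamma$ requires knowing that the complex structure of the end is standard --- a nontrivial filling/rigidity statement you do not supply. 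The paper's argument is self-contained and softer: since $b_1(X)=0$, the two Killing fields are globally Hamiltonian, the moment map is proper, a generic combination $a_1t_1+a_2t_2$ is a proper Morse function, and the Atiyah convexity picture shows that the preimages of the boundary segments of the moment polygon are totally geodesic holomorphic $2$-spheres whose union is a deformation retract of $X$, which yields the generation of $H_2(X)$ directly.
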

\begin{proof}
Recall that a Killing vector field $\eta$ on a  Riemannian manifold is completely
determined by its $1$-jet at any point $p$, since the restriction of such 
a field to any geodesic solves a second-order ODE---namely, Jacobi's equation. 
Because Killing's equation says $\nabla_a\eta_b =-\nabla_b\eta_a$, the initial data for
a Killing field may be identified with $\Lambda^1_p\oplus \Lambda^2_p$.
If we equip the bundle $\Lambda^1\oplus \Lambda^2$ with a connection
defined by 
$$D_a (\eta_b , \varphi_{cd}) : = (\nabla_a\eta_b- \varphi_{ab}, 
\nabla_a\varphi_{cd} -
{{\mathcal R}^e}_{acd}\eta_e),$$
we thus conclude that 
Killing fields  precisely correspond, 
via  $\eta^a\mapsto(\eta_b , \nabla_c \eta_d )$, 
to $D$-parallel sections of $\Lambda^1\oplus \Lambda^2$. 
A constant rescaling  $g\rightsquigarrow  cg$ of the metric 
merely induces a homothety $(\eta , \varphi ) \mapsto  (c\eta, c\varphi )$, and
so in particular does not affect the correspondence between 
$1$-parameter subgroups of the isometry group and parallel
line-sub-bundles of $\Lambda^1\oplus \Lambda^2$.

Since $(X, g_\infty)$ is constructed as a pointed limit, it comes equipped
with a base-point $p\in X$ which,  after passing to 
a suitable subsequence,  can be thought of  as the limit of a sequence of points
$p_i\in M$ associated with rescalings $c_ig_i$ of the given metrics. 
Since each of the metrics $g_i$ is toric, the generators of the torus
action span a $2$-plane $\Pi_i\subset \Lambda^1_{p_i}\oplus \Lambda^2_{p_i}$,
and since, by construction,  $T_pX$ is canonically identified with each $T_{p_i}M$,
we therefore obtain a sequence in the Grassmannian $Gr_2(\Lambda^1_p\oplus \Lambda^2_p)$.
But  the latter is compact, so we may arrange, by again passing to  
a subsequence, that $\Pi_i \to \Pi$ for some $2$-plane $\Pi\subset \Lambda^1_p\oplus \Lambda^2_p$.
Since the pull-backs of the rescaled metrics $c_ig_i$ via suitable diffeomorphisms  converge
in $C^\infty$ to $g_\infty$ on compact subsets of $X$, the holonomy transformation
of $\Lambda^1_p\oplus \Lambda^2_p$ induced by $D$-parallel transport 
in $\Lambda^1\oplus \Lambda^2\to X$ around
any smooth loop in $\gamma$ based at $p$ is always the limit of the holonomy transformations
in $(M, c_ig_i)$ around loops based at $p_i$, and since the latter holonomy
transformation act trivially on $\Pi_i$, it follows that $\Pi\subset \Lambda^1_p\oplus \Lambda^2_p$
is invariant under the holonomy transformations induced by $D$-parallel transport.
Hence $\Pi$ extends uniquely via $D$-parallel transport to a $D$-parallel
sub-bundle of $\Lambda^1\oplus \Lambda^2\to X$, and we thereby obtain 
two non-proportional Killing fields on $(X,g_\infty )$. Moreover, these Killing
fields can now be seen to arise, under suitable  diffeomorphisms,  as smooth limits 
on compact sets of 
linear combinations of the original two commuting Killing fields, so  the two Killing fields
$\eta$, $\tilde{\eta}$  we obtain in this way on $X$ automatically commute with each other.

Now the two Killing fields $\eta$ and $\tilde{\eta}$ become conformal 
Killing fields on the anti-self-dual orbifold $\hat{X}= X\cup \{ \infty\}$ which 
vanish at the added orbifold point. But Pontecorvo \cite{max} has pointed
out that, even locally,  any conformally flat scalar-flat K\"ahler surface is locally
symmetric, and it therefore follows that a non-flat ALE scalar-flat K\"ahler  
surface like $(X, g_\infty )$ can  never be conformally flat.
The compact orbifold  $\hat{X}= X\cup \{ \infty\}$ therefore has 
 $W_-\not\equiv 0$. Thus the usual proofs of the 
Ferrand-Lelong/Obata theorem \cite{lelongferrand,obata} apply in this orbifold setting, and 
the conformal group of $(\hat{X}, [\hat{g}])$ coincides with the isometry group 
of some orbifold metric in the conformal class.   Since our conformal Killing fields 
$\eta$ and $\hat{\eta}$  fix the orbifold point $\infty\in \hat{X}$, 
their action on $\hat{X}$ is therefore completely determined
(via the exponential map)  by their action on 
the tangent space at $\infty$ in a local uniformizing chart. 
Up to finite covers, this therefore gives  us a faithful  $SO(4)$-valued 
representation of the group  generated by $\eta$ and $\tilde{\eta}$.
But $\eta$ and $\tilde{\eta}$ are independent, and  $[\eta , \tilde{\eta}]=0$. 
The $2$-dimensional Abelian Lie group they generate must therefore be covered by a
maximal torus in SO(4), and so must be compact. Hence the Killing
fields $\eta$ and $\hat{\eta}$ generate an action of the compact group
$T^2$  on $(X, g_\infty )$.  Moreover, by replacing $\eta$ and $\hat{\eta}$
with linear combinations, we may find an asymptotic chart for
$X$ in which 
\begin{eqnarray*}
\eta &=& x^1 \frac{\partial}{\partial x^2}  - x^2 \frac{\partial}{\partial x^1}   + O(1) 
\\
\tilde{\eta} &=& x^3 \frac{\partial}{\partial x^4}  - x^4 \frac{\partial}{\partial x^3}   + O(1) 
\\
g_\infty&=& \text{Euclidean} + O(\rad^{-2})\\
\partial g_\infty
&=& O(\rad^{-3})
\end{eqnarray*}

Now the original Killing fields on $(M, c_ig_i)$ were real-holomorphic, so their
$1$-jets in fact all belonged  to $\Lambda^1_{p_i}\oplus \Lambda^{1,1}_{p_i}$.
It follows that the limit plane $\Pi$ is therefore a sub-space of  
$\Lambda^1_{p}\oplus \Lambda^{1,1}_{p}$,
and  $\eta$ and  $\tilde{\eta}$ are therefore also real holomorphic.
Since these fields preserve both the metric $g_\infty$ and the limit complex structure
$J=J_\infty$, they therefore preserve the limit K\"ahler form $\omega= \omega_\infty$, too.
We can thus arrange that in our asymptotic chart  we also have 
$$
\omega = dx^1\wedge dx^2 + dx^3 \wedge dx^4  + O(\rad^{-2}) ~. 
$$

  By Lemma \ref{bub2}, $b_1(X)=0$, so both $\eta$ and $\tilde{\eta}$
are  globally Hamiltonian; that is, there exist smooth functions 
$t_1, t_2: X\to \RR$ such that 
$$\omega (\eta , \cdot )  = - d t_1, ~~ \omega (\tilde{\eta}  ,\cdot )  = - d   t_2~,$$
and the above asymptotics  therefore give us 
\begin{eqnarray*}
t_1 &=& 
 \frac{|x^1|^2 + |x^2|^2}{2} 
  + O(\rad) \\
t_2  &=&  \frac{|x^3|^2 + |x^4|^2}{2} 
  + O(\rad)
\end{eqnarray*}
This shows  that $t_1 + t_2 > \rad^2/3$ on the complement of a compact set, 
and it therefore follows  that the moment map 
$$
\vec{t}= (t_1 , t_2) : X \longrightarrow  \RR^2$$is proper. Moreover, 
any linear combination $a_1t_1 + a_2 t_2$, where $a_1$ and $a_2$ are
positive constants, is proper by the same argument. 
Since $\eta$ and $\tilde{\eta}$ are Killing fields on our K\"ahler manifold, a generic 
such linear combination has only non-degenerate critical points, and  
is therefore a  Morse function. 
The essence of \cite{atcvx} therefore applies, despite our non-compact setting. 
Namely, the image of $\vec{t}$ is a convex subset of the plane, 
bounded by two half-lines and a finite number of line segments
of rational slope: 
\begin{center}
\begin{picture}(240,90)(0,10)
\put(80,50){\vector(0,1){50}}
\put(110,20){\vector(1,0){50}}
\put(110,20){\line(-2,1){20}}
\put(80,50){\line(1,-2){10}}
\end{picture}
\end{center}
The two boundary rays arise from the fixed-point sets of $\eta$ and $\tilde{\eta}$,
which are totally geodesic copies of $\CC$ emanating from two 
fixed points of the torus action. 
The boundary segments arise from other sets where some circle subgroup of
$T^2$ acts trivially. The inverse image of each such segment is a
totally geodesic surface,  which  must in turn be a
topological $2$-sphere because of  the residual circle action; moreover, 
each such $2$-sphere is the zero locus of a 
holomorphic vector field,
and so is a  holomorphic curve. Finally, the 
union of these $\CP_1$'s  is a deformation retraction of
$X$, as may be accomplished by  pushing  along  the flow of 
some  Morse function $a_1t_1 + a_2 t_2$. Thus $b_2(X)$ is exactly the 
number of line segments, and $H_2(X)$ is  generated by holomorphically
embedded $\CP_1$'s, as promised. 
\end{proof}

Given the amount of structure we have already 
 displayed, it seems extremely plausible  that
  our
toric manifolds  
$(X,g_\infty)$ actually  number among    the scalar-flat K\"ahler instantons 
explicitly constructed by Calderbank and Singer \cite{caldsing}. The latter arise via 
a special form of the  Joyce ansatz, and  one of Joyce's results 
 \cite[Theorem 2.4.5]{rejoyce} in any case 
implies  that our metrics $g_\infty$  are at least 
locally expressible in his framework. Moreover, the results of 
Fujiki \cite{fujiki}, although not immediately  applicable here, 
 make it seem very  likely that a
 global result along these lines should actually hold.  

For our purposes, however, it will not actually be necessary to 
know the possible bubble metrics $g_\infty$ in closed form. Instead, the 
next few lemmas will supply all the information we need.

\begin{lem} \label{bottom} 
Let  $X$ be as in Lemma \ref{bub}. 
Then $b_2(X) \neq 0$.  \end{lem}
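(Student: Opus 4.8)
The plan is to argue by contradiction: I would assume $b_2(X)=0$ and show that $X$ must then be flat, which is impossible since a deepest bubble carries at least $\epsilon_0$ worth of curvature energy, $\int_X|\Riem|^2\,d\mu\geq\epsilon_0>0$.

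First I would exploit the toric description of Proposition \ref{toric}. Since $H_2(X)$ is generated by the holomorphically embedded $\CP_1$'s coming from the bounded edges of the moment image, the assumption $b_2(X)=0$ forces the moment image to have no bounded edges at all; it is then a convex cone bounded by the two rays associated with the fixed-point sets of $\eta$ and $\tilde{\eta}$, which must therefore share a single vertex. Because $X$ is a smooth toric surface, this unique vertex is a smooth toric fixed point, so the two edge-normals form a $\ZZ$-basis of the weight lattice and the moment cone is $SL(2,\ZZ)$-equivalent to the standard quadrant. Hence $X\cong\CC^2$ as a toric variety, its asymptotic cone is flat $\CC^2$, and the group $\Gamma$ at infinity is trivial, so the single end of $X$ is the round $S^3$.

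Next I would feed the topological data into the index-theoretic formulas for ALE $4$-manifolds. By Lemma \ref{bub2} we have $b_1(X)=b_3(X)=0$, so $\chi(X)=1$; and by Proposition \ref{neg} the intersection form is negative definite, so $\tau(X)=-b_2(X)=0$. Since scalar-flat K\"ahler surfaces are anti-self-dual, we have $s\equiv 0$ and $W_+\equiv 0$ on $X$. The signature identity $\tau(X)=\frac{1}{12\pi^2}\int_X(|W_+|^2-|W_-|^2)\,d\mu$, valid for an ALE space with trivial group at infinity because the $\eta$-invariant defect of the round $S^3$ vanishes, then gives $\int_X|W_-|^2\,d\mu=0$ once $\tau(X)=0$ and $W_+=0$ are inserted, so $W_-\equiv 0$.

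Finally, $W_+=W_-=0$ means $X$ is conformally flat. But $X$ is also scalar-flat K\"ahler, and the argument already invoked in the proof of Proposition \ref{toric} (via Pontecorvo \cite{max}) shows that a non-flat ALE scalar-flat K\"ahler surface can never be conformally flat; alternatively, feeding $s=0$ and $W=0$ into the Gauss-Bonnet formula $\chi(X)=\frac{1}{8\pi^2}\int_X\big(\frac{s^2}{24}+|W|^2-\frac{1}{2}|\mathring{r}|^2\big)\,d\mu+1$ together with $\chi(X)=1$ forces $\mathring{r}\equiv 0$ as well, so $X$ is flat. This contradicts the non-triviality of the bubble, completing the proof. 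The main obstacle is the first step: one must use the toric structure decisively to pin down that $b_2=0$ leaves no room for a non-trivial $\Gamma$ at infinity, since otherwise the $\eta$-invariant defect in the signature formula would obstruct the conclusion $W_-=0$.
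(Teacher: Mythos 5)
Your proposal is correct, and its skeleton is the same as the paper's: assume $b_2(X)=0$, use the moment-polytope picture of Proposition \ref{toric} to pin down the topology and show the group at infinity is trivial, then use the signature formula (with $W_+\equiv 0$) to kill $W_-$, and conclude that $X$ would have to be flat, contradicting the nontriviality of a deepest bubble. The differences are in execution, and are worth noting. For the topological step, the paper avoids any appeal to toric classification: since $b_2(X)=0$ means the moment image has no bounded edges, a generic Morse function $a_1t_1+a_2t_2$ has exactly one critical point, whence $X$ is diffeomorphic to $\RR^4$ and $\hat{X}=X\cup\{\infty\}$ to $S^4$; this yields $\Gamma=\{1\}$ and $\tau(\hat{X})=0$ in one stroke. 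Your route via the Delzant condition at the smooth vertex is sound in spirit, but the assertion that $X\cong\CC^2$ \emph{as a toric variety} quietly invokes a classification of non-compact symplectic toric manifolds that is stronger than anything you need; what you actually use is only that the end determined by two edge rays $u,v$ with $|\det(u,v)|=1$ is $S^3$---which is the same solid-torus gluing computation the paper carries out in Lemmas \ref{b21} and \ref{b22}. (Note that contractibility of $X$ alone would not suffice here: a perfect group such as the binary icosahedral group could still appear at infinity, so some version of this lattice or Morse-theoretic argument is genuinely needed, exactly as you flagged.) For the final step, your Gauss--Bonnet alternative---inserting $W_-\equiv 0$, $\chi(X)=1$, $|\Gamma|=1$ into (\ref{gammab}) to force $\mathring{r}\equiv 0$---is cleaner than the paper's finish, which instead passes through conformal flatness, Pontecorvo's theorem, and curvature decay to get local symmetry and hence flatness; your shortcut mirrors exactly how the paper itself disposes of the $\Gamma=\{1\}$ case in Lemma \ref{nae}.
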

\begin{proof}
If $b_2(X)=0$, the proof of Proposition \ref{toric} shows that there is 
a Morse function on $X$ with exactly one critical point. 
\begin{center}
\begin{picture}(240,70)(0,10)
\put(80,20){\vector(0,1){50}}
\put(80,20){\vector(1,0){50}}
\put(100,40){\vector(1,1){20}}
\end{picture}
\end{center}
Thus $X$ is 
diffeomorphic to $\RR^4$. Hence $\hat{X}=X\cup\{ \infty\}$ is diffeomorphic 
to $S^4$, and the signature formula then shows that $\hat{X}$ is conformally flat. 
Thus $(X,g_\infty )$ is a conformally flat scalar-flat K\"ahler manifold, and 
Pontecorvo's theorem \cite{max} therefore tells us that it is locally symmetric.
Its curvature 
fall-off at infinity 
 therefore forces  $(X,g_\infty )$ to be flat\footnote{This could also be proved
by instead using a result of 
Anderson 
 \cite[Corollary 3.9]{andsd}.}. 
 But a deepest bubble $X$ cannot be flat, so  this case simply never
arises. 
\end{proof}

While this result may look innocuous, it is actually  heavily dependent
on the fact that $X$ is known to be toric. For example,  there is a
non-trivial Ricci-flat ALE metric (with isometry group $SO(3)$) 
on $\CP_2$ minus a smooth a conic; but  this space is 
diffeomorphic to $T\RR {\mathbb P}^2$, and so has 
$b_2=0$.
 Perhaps the most dramatic 
consequence of the toric condition is that it guarantees the 
existence of a Morse function whose critical points all have 
even index; thus  $X$ must, in particular, be 
simply connected. We leave it as  an exercise 
 to check that, more generally,  any  {\em simply connected} 
scalar-flat K\"ahler 
bubble  must have $b_2\neq 0$.
The point is that 
$X$ must then  either  be hyper-K\"ahler, in which 
case one can appeal to the results of 
  Kronheimer \cite{krontor}, or else the Ricci form will correspond to  a non-trivial 
  bounded harmonic $2$-form on  $\hat{X}$.

\begin{lem}\label{b21}
 Let  $(X, g_\infty)$ be a deepest bubble, as in Lemma \ref{bub}.
If $b_2 (X)= 1$, then $X$ is diffeomorphic to a complex line bundle of
negative degree over  $\CP_1$, in such a manner that the zero section 
 corresponds to a holomorphic curve  $\CP_1\subset X$.  In particular, 
the intersection form of $X$ is $(-k)$ for some integer $k\geq 1$, and the 
group $\Gamma$ at infinity is the cyclic group $\ZZ_k$. 
\end{lem}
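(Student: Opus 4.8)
The plan is to extract everything from the toric structure already established in Proposition \ref{toric}. First I would invoke that proposition: the deepest bubble $(X,g_\infty)$ carries a proper moment map $\vec{t}=(t_1,t_2)\colon X\to\RR^2$ whose image is a convex region bounded by two half-lines of rational slope together with exactly $b_2(X)$ finite line segments. Since we are assuming $b_2(X)=1$, the image is therefore bounded by two rays and a single finite segment, so its boundary is a broken line with exactly two vertices. These two vertices are the only fixed points of the $T^2$-action, the single finite segment corresponds to one holomorphically embedded rational curve $C\cong\CP_1\subset X$ (the generator of $H_2(X)$ produced by Proposition \ref{toric}), and the two rays are the fixed-point loci of two circle subgroups, each a totally geodesic copy of $\CC$ running out to infinity.

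Next I would promote this combinatorial picture to a global diffeomorphism. Because a deepest bubble $X$ is a genuine smooth manifold, the moment image is Delzant in the appropriate sense: at each of the two vertices the primitive edge vectors of the two incident boundary edges form a $\ZZ$-basis of the lattice. A moment image bounded by one finite edge flanked by two unbounded edges is exactly the data of the total space of a complex line bundle over $\CP_1$, with the finite edge corresponding to the zero section. Concretely, the torus action together with a Morse function $a_1t_1+a_2t_2$ as in Proposition \ref{toric} exhibits $X\setminus C$ as fibered over $C$, and the deformation retraction of $X$ onto the single curve $C$ (again from Proposition \ref{toric}) identifies $X$ with the total space of the holomorphic normal bundle of $C$. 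Thus $X$ is diffeomorphic to a complex line bundle over $\CP_1$ with zero section $C$, and its degree equals the self-intersection $C\cdot C$. By Proposition \ref{neg} the intersection form of $X$ is negative definite, so $C\cdot C=-k$ for some integer $k\geq 1$; in particular the line bundle has negative degree and the intersection form of $X$ is $(-k)$.

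Finally I would read off the group at infinity. Once $X$ is identified with the total space of ${\mathcal O}(-k)\to\CP_1$, it is the minimal resolution of the cyclic quotient singularity $\CC^2/\ZZ_k$ of type $\tfrac{1}{k}(1,1)$, whose exceptional divisor is precisely a single $(-k)$-curve; equivalently, deleting the zero section leaves $(\CC^2\setminus\{0\})/\ZZ_k$, whose cross-section at infinity is the lens space $S^3/\ZZ_k$. Hence $\Gamma=\ZZ_k$, in agreement with the link $S^3/\Gamma$ of Lemma \ref{bub2}.

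I expect the main obstacle to be the middle step: passing from the purely combinatorial statement that the moment image has a single finite edge to the honest global conclusion that $X$ is the total space of the normal bundle of $C$, rather than merely possessing a tubular neighborhood of $C$ of that form. The non-compact toric correspondence, combined with the properness of $\vec{t}$ and the global deformation retraction onto $C$, is what rules out extraneous non-compact pieces; and keeping the bookkeeping between the self-intersection number $-k$ and the order $k$ of the asymptotic group $\ZZ_k$ mutually consistent is the one point that genuinely requires care.
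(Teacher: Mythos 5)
Your proposal is correct and follows essentially the same route as the paper: the paper's proof likewise flows along the gradient of a Morse function $a_1t_1+a_2t_2$ from Proposition \ref{toric} to identify $X$ with a tubular neighborhood (equivalently, the total space of the normal bundle) of a single holomorphic $\CP_1$, then invokes Proposition \ref{neg} for negativity of the intersection form, with the lens-space identification of $S^3/\Gamma$ left implicit. Your extra care about globalizing the tubular-neighborhood picture and the $\frac{1}{k}(1,1)$ resolution is a fuller write-up of exactly what the paper compresses into two sentences.
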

\begin{proof}
By flowing along the gradient of a suitable Morse function $a_1t_1 + a_2 t_2$, 
the proof of Proposition \ref{toric} shows that $X$ is diffeomorphic to a
tubular neighborhood of a single holomorphically embedded $\CP_1$: 
\begin{center}
\begin{picture}(240,90)(0,10)
\put(80,50){\vector(0,1){50}}
\put(110,20){\vector(1,0){60}}
\put(110,50){\vector(1,1){30}}
\put(110,20){\line(-1,1){30}}
\end{picture}
\end{center}
Since $X$ has negative intersection form by Proposition \ref{neg}, 
the claim follows. 
\end{proof}

\begin{lem} \label{b22} 
Let  $(X, g_\infty)$ be a deepest bubble, as in Lemma \ref{bub}.
If $b_2 (X)= 2$, then $X$ is diffeomorphic to the $4$-manifold obtained by 
plumbing together two complex line bundles  over $\CP_1$.
Moreover, there is a basis for $H_2 (X,\ZZ)$,  represented by a pair of totally 
geodesics and 
holomorphic $\CP_1$'s, in which the intersection form becomes 
$$\left(\begin{array}{cc}-k & 1 \\1 & -\ell\end{array}\right)$$
for some positive integers $k\geq 2$ and  $\ell\geq 1$. 
Finally, the 
group $\Gamma$ at infinity is the cyclic group $\ZZ_{k\ell -1}$.
\end{lem}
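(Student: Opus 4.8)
The plan is to extract everything from the moment-map picture already established in Proposition \ref{toric}. Since $b_2(X) = 2$, the number of bounded edges of the image $\vec{t}(X) \subset \RR^2$ is exactly two, so this convex region is bounded by the two half-lines (coming from the fixed-point sets of $\eta$ and $\tilde{\eta}$) together with precisely two line segments of rational slope. Because the boundary is a single connected curve running from one ray to the other, these two segments must be adjacent, meeting at one interior vertex, which is an isolated fixed point of the $T^2$-action. The two segments correspond to totally geodesic holomorphically embedded curves $C_1, C_2 \cong \CP_1$, which by Proposition \ref{toric} generate $H_2(X,\ZZ)$. First I would record that $C_1$ and $C_2$ meet transversally at exactly the one fixed point corresponding to their shared vertex, since near such a fixed point the two invariant curves through it are the local coordinate axes; as both are holomorphic, they meet positively, so $C_1 \cdot C_2 = +1$.

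Next I would pin down the diffeomorphism type. Each $C_i$ carries an invariant tubular neighborhood which is a complex line bundle (disk bundle) over $\CP_1$, and, exactly as in the proof of Proposition \ref{toric}, flowing along the gradient of a generic Morse function $a_1 t_1 + a_2 t_2$ exhibits $X$ as deformation retracting onto $C_1 \cup C_2$. Since these two disk bundles are glued over the single transverse intersection point, $X$ is diffeomorphic to the plumbing of two complex line bundles over $\CP_1$. For the intersection form, set $C_1^2 = -k$ and $C_2^2 = -\ell$, which are integers because $[C_i] \in H_2(X,\ZZ)$; in the basis $\{[C_1],[C_2]\}$ the form is then $\left(\begin{smallmatrix} -k & 1 \\ 1 & -\ell \end{smallmatrix}\right)$. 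Proposition \ref{neg} guarantees this form is negative definite, which forces $k,\ell \geq 1$ together with $\det = k\ell - 1 > 0$; in particular $k$ and $\ell$ cannot both equal $1$, so after relabeling we may take $k \geq 2$ and $\ell \geq 1$.

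Finally, to identify the group at infinity, I would use that the boundary $S^3/\Gamma$ is precisely the boundary of this linear plumbing. For a plumbing along a chain with negative-definite intersection matrix, the boundary $3$-manifold has $H_1$ equal to the cokernel of the intersection matrix, hence of order $|\det| = k\ell - 1$; since the plumbing graph is linear, this boundary is a lens space, whose fundamental group is cyclic. Therefore $\Gamma = \ZZ_{k\ell - 1}$. Equivalently, this is the Hirzebruch-Jung data of the cyclic quotient singularity resolved by the chain $C_1 \cup C_2$, read off from the continued fraction $[k,\ell] = k - 1/\ell = (k\ell - 1)/\ell$.

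I expect the main obstacle to be the bookkeeping for $\Gamma$: one must confirm both that the boundary is genuinely a lens space, so that $\Gamma$ is \emph{cyclic} and not merely a group of order $k\ell-1$, and that the order is exactly the determinant. The first point follows from the linearity of the plumbing graph (a chain of length two), and the second from the standard computation of the linking form of a negative-definite plumbing. As a cross-check, both facts can instead be read directly from the two outer primitive edge-vectors of the moment polytope, whose $2\times 2$ determinant computes the index of $\Gamma$ as a subgroup of the torus lattice acting on the asymptotic cone $\CC^2/\Gamma$.
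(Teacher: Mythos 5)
Your proposal is correct, and its skeleton matches the paper's proof: both extract the two holomorphic, totally geodesic $\CP_1$'s from the two bounded edges of the moment polytope, both identify $X$ with the plumbing of their normal disk bundles via the gradient flow of a generic $a_1t_1+a_2t_2$, and both use the negative definiteness from Proposition \ref{neg} to pin down $k$ and $\ell$ (your route via $k,\ell\geq 1$ and $\det = k\ell-1>0$ is equivalent to the paper's, which instead evaluates the form on the two generators and on their sum to get $-k<0$, $-\ell<0$, $-k-\ell+2<0$). The one genuine difference is the identification of $\Gamma$. The paper is self-contained here: it observes that $S^3/\Gamma$ is a high level set of the Morse function, realizes it as two solid tori glued along their boundaries, and computes the gluing matrix
$$\left(\begin{array}{cc}-k & 1 \\ 1 & 0\end{array}\right)^{-1}\left(\begin{array}{cc}0 & 1 \\ 1 & 0\end{array}\right)\left(\begin{array}{cc}-\ell & 1 \\ 1 & 0\end{array}\right)=\left(\begin{array}{cc}-\ell & 1 \\ 1-k\ell & k\end{array}\right)$$
explicitly in $GL(2,\ZZ)$, concluding directly that the boundary is $L(k\ell-1,\ell)$. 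You instead quote standard plumbing calculus: $H_1(\partial U)\cong\mathrm{coker}$ of the intersection matrix (which gives the order $k\ell-1$ via the long exact sequence of the pair, using $b_1(X)=0$), plus the textbook fact that the boundary of a linear plumbing of disk bundles over spheres is a lens space (which gives cyclicity). Your version is shorter and correctly flags the one subtlety---that the cokernel computation alone only controls the abelianization, so linearity of the chain is what forces $\Gamma$ to be cyclic---but it defers to results whose usual proofs are precisely the solid-torus computation the paper carries out; the paper's version buys complete self-containedness at the cost of a page of matrix bookkeeping. Your Hirzebruch--Jung cross-check $k-1/\ell=(k\ell-1)/\ell$ agrees with the paper's $L(k\ell-1,\ell)$.
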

\begin{proof}
By following the gradient lines of a suitable Morse function $a_1t_1 + a_2 t_2$, 
the proof of Proposition \ref{toric} shows that $X$ is diffeomorphic to a
neighborhood of a pair of  holomorphically embedded $\CP_1$'s: 
\begin{center}
\begin{picture}(240,90)(0,10)
\put(80,50){\vector(0,1){50}}
\put(110,20){\vector(1,0){50}}
\put(110,20){\line(-2,1){20}}
\put(80,50){\line(1,-2){10}}
\put(110,50){\vector(1,1){30}}
\end{picture}
\end{center}
Since this neighborhood can be obtained by plumbing together 
the normal bundles of these $\CP_1$'s,  the 
intersection form certainly is given by a matrix of the 
above form for some  $k, \ell \in \ZZ$. 
But $X$ has negative intersection form by Proposition 
\ref{neg}, so we must have $-k< 0$, $-\ell < 0$, and $-k - \ell + 2 < 0$,
as may be seen by taking the self-intersections of the two generators and 
their sum. Thus  $k$ and $\ell$ are both  positive, 
and one of them (without loss of generality, 
$k$) must be large than $1$.

Now the $3$-manifold $S^3/\Gamma$ must be diffeomorphic to any 
level set of the Morse function occurring above the highest critical point, and 
our plumbing picture says that this $3$-manifold can therefore be 
obtained from the disjoint union of $S^3/\ZZ_k$ and $S^3/\ZZ_\ell$, thought of 
as principal circle bundles over the two $\CP_1$'s, by deleting
a trivialized neighborhood of a  fiber in each and then 
identifying the resulting  $S^1\times S^1$ boundaries via an interchange of the factors. 
It follows that $S^3/\Gamma$ can be constructed 
by  gluing together two solid tori $S^1\times D^2$ along their
boundaries $S^1\times S^1 = \RR^2/\ZZ^2$ via 
$$
\left(\begin{array}{cc}-k & 1 \\1 & 0\end{array}\right)^{-1}
\left(\begin{array}{cc}0 & 1 \\1 & 0\end{array}\right)
\left(\begin{array}{cc}-\ell & 1 \\1 & 0\end{array}\right)
=\left(\begin{array}{cc}-\ell & 1 \\ 1-k\ell  & k\end{array}\right)\in GL(2, \ZZ) ~.
$$
Thus a meridian of one torus becomes
a circle of slope $\ell/(k\ell -1)$ on the other, 
and the $3$-manifold at infinity is therefore a Lens space $L(k\ell -1, \ell)$,   with 
fundamental group $\Gamma \cong \ZZ_{k\ell -1}$. 
\end{proof} 

\section{Obstructions to Bubbling}

In light of the information gleaned from \S \ref{action}, the 
curvature of bilaterally symmetric extremal K\"ahler metrics 
on  $M=\CP_2 \# 2\overline{\CP}_2$ is rather tightly constrained, 
at least when $x\in (0,x_0]$; indeed, such metrics have
$$
\frac{1}{32\pi^2}\int s^2 d\mu = {\mathcal A} ([\omega ]) < 8
$$
if 
 $x= \varepsilon / \beta$ 
is  in this range. 
However, the
K\"ahler condition  implies that 
$$
\frac{1}{8\pi^2} \int_M \left(\frac{s^2}{4}  - |\mathring{r}|^2\right)d\mu = c_1^2 (M) = 7,
$$
so 
any of these extremal K\"ahler  metrics actually has 
$$
\int_M |\mathring{r}|^2~d\mu  <  8\pi^2 . 
$$
By the signature formula, we also have
$$\int_M |W_-|^2 d\mu = - 12\pi^2 \tau (M) + \int_M |W_+|^2 d\mu = 12\pi^2 
+ \int_M \frac{s^2}{24} d\mu$$
for any K\"ahler metric. Thus  extremal K\"ahler metrics with 
${\mathcal A}([\omega ]) < 8$ also satisfy 
\begin{equation}
\label{lid}
\frac{1}{4\pi^2}\int_M |W_-|^2 d\mu < 3 + \frac{8\cdot 32\pi^2}{24\cdot 4\pi^2}= \frac{17}{3}
\end{equation}
implying, in particular, that 
$$
\int_M |W_-|^2 d\mu < 23\pi^2.
$$
We will now use this and similar knowledge to prove, in stages, that curvature bubbling
does not occur  for 
sequences of such metrics.

One of the tools we will use repeatedly is a variant of the Gauss-Bonnet formula. 
If $(X,g_\infty)$ is any ALE $4$-manifold with group $\Gamma$ at infinity, 
then the corrected form\footnote{This may be proved by Chern's method \cite{chern}; namely, if we choose an asymptotically
outward pointing vector field, the Gauss-Bonnet integral counts the number of 
zeroes of the vector field, plus a boundary integral which would contribute  
$-1$ in Euclidean space.}
 of the Gauss-Bonnet formula reads 
$$
\frac{1}{8\pi^2} \int_X \left(\frac{s^2}{24}+|W_+|^2 +|W_-|^2 - \frac{|\mathring{r}|^2}{2}
\right) d\mu_{g_\infty} = \chi (X)-\frac{1}{|\Gamma |}$$
where $\chi$ is the topological Euler characteristic of the 
non-compact manifold, and $|\Gamma|$ is the order of the group. 
When $(X,g_\infty)$ is scalar-flat K\"ahler, this simplifies to become
\begin{equation}
\label{gammab}
\frac{1}{8\pi^2} \int_X \left(|W_-|^2 - \frac{|\mathring{r}|^2}{2}
\right) d\mu_{g_\infty}  = \chi (X)-\frac{1}{|\Gamma |}~.
\end{equation}
Our first key observation  is that our deepest bubbles must
necessarily have $\Gamma \neq \{ 1\}$. 

\begin{lem} \label{nae} 
Let $g_i$ be a sequence of  unit volume bilaterally symmetric 
extremal K\"ahler metrics
on $M= \CP_2\# 2 \overline{\CP}_2$ with ${\mathcal A}(g_i)< 8-\delta$ for some 
$\delta > 0$, and suppose that  sectional curvatures
are not 
uniformly bounded
for  this sequence.  Let $(X,g_\infty)$ be a deepest bubble extracted  by rescaling
a subsequence  at points of maximal curvature. Then, 
 at infinity, 
the ALE scalar-flat
K\"ahler manifold  $(X,g_\infty)$ is asymptotic  to $\RR^4/\Gamma$
for some $\Gamma \neq \{ 1\}$. That is,  $(X,g_\infty)$ cannot be strictly
asymptotically Euclidean. 
\end{lem}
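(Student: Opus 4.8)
The plan is to argue by contradiction: suppose the deepest bubble $(X,g_\infty)$ is strictly asymptotically Euclidean, so that its group at infinity is $\Gamma = \{1\}$. The idea is to combine the topological classification of $X$ obtained in the previous lemmas with the Gauss--Bonnet and signature formulas, and then to collide the resulting curvature identities with the energy ceilings that the bubble inherits from the sequence.

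First I would pin down the topology of $X$. By Lemma \ref{bub2} and Lemma \ref{bottom} we have $b_2(X)\in\{1,2\}$. If $b_2(X)=1$, Lemma \ref{b21} gives intersection form $(-k)$ with $\Gamma=\ZZ_k$, so $\Gamma=\{1\}$ forces $k=1$ and hence $\tau(X)=-1$; if $b_2(X)=2$, Lemma \ref{b22} gives $\Gamma=\ZZ_{k\ell-1}$ with $k\ge 2$, $\ell\ge 1$, so $\Gamma=\{1\}$ forces $k\ell=2$, i.e. $(k,\ell)=(2,1)$, and the intersection form is then negative definite of rank two, giving $\tau(X)=-2$. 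In either case the link at infinity is the round $S^3$, so the compactification $\hat X=X\cup\{\infty\}$ of Proposition \ref{reg} is a genuinely smooth closed $4$-manifold carrying a real-analytic anti-self-dual conformal structure, with $\tau(\hat X)=\tau(X)$.

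Next I would turn this into energy identities. Since a scalar-flat K\"ahler surface is anti-self-dual, $W_+\equiv 0$ on $X$; because the Weyl energy is conformally invariant and blind to a single added point, the signature formula on $\hat X$ reads
$$\int_X |W_-|^2\, d\mu_{g_\infty} = \int_{\hat X} |W_-|^2\, d\mu_{\hat g} = -12\pi^2\,\tau(X).$$
In the case $b_2(X)=2$ this gives $\int_X|W_-|^2 = 24\pi^2$; but Lemma \ref{bub} together with (\ref{lid}) forces $\int_X|W_-|^2 \le \limsup_i \int_M|W_-|^2 \le \frac{68}{3}\pi^2 < 24\pi^2$, a contradiction, so this case cannot occur. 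In the case $b_2(X)=1$ the same formula yields only $\int_X|W_-|^2 = 12\pi^2$, which is \emph{not} excluded by the $W_-$ ceiling; here I would feed this value into the scalar-flat Gauss--Bonnet identity (\ref{gammab}), with $\chi(X)=2$ and $|\Gamma|=1$, to extract $\int_X|\mathring r|^2 = 8\pi^2$. This contradicts the trace-free Ricci ceiling, since $\int_M|\mathring r|^2 = 8\pi^2(\mathcal A-7)$ and the hypothesis $\mathcal A(g_i)<8-\delta$ give $\int_X|\mathring r|^2 \le \limsup_i\int_M|\mathring r|^2 \le 8\pi^2(1-\delta) < 8\pi^2$.

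The main obstacle is the borderline case $b_2(X)=1$, which is precisely the Burns-type scalar-flat K\"ahler metric on $\mathcal{O}(-1)\cong\overline{\CP}_2\setminus\{\text{pt}\}$: its Weyl energy $12\pi^2$ sits safely below the available ceiling, so it cannot be killed by $W_-$ energy alone, and one is forced onto the trace-free Ricci energy, where the margin is razor-thin. This is exactly why the hypothesis is stated with a strict gap $\mathcal A(g_i)<8-\delta$ rather than merely $\mathcal A<8$: the contradiction $\int_X|\mathring r|^2=8\pi^2$ survives the passage to the Gromov--Hausdorff limit only if the ambient energies are bounded \emph{uniformly} away from $8\pi^2$. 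A secondary point to handle carefully is the compactification itself, which leans on Proposition \ref{reg} to produce a smooth (not merely orbifold) closed anti-self-dual manifold when $\Gamma$ is trivial, so that the ordinary closed signature formula may legitimately be applied.
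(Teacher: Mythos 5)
Your proof is correct, and its analytic engine is the same as the paper's: compactify via Proposition \ref{reg}, apply the signature formula together with the corrected Gauss--Bonnet identity (\ref{gammab}), and then collide the resulting curvature identities with the energy ceilings supplied by Lemma \ref{bub}. Where you differ is in how the topology gets pinned down. You route the argument through the toric classification results --- Lemmas \ref{bottom}, \ref{b21}, and \ref{b22} --- to restrict to $b_2(X)\in\{1,2\}$, compute $\tau(X)$ explicitly in each case, and then dispatch $b_2(X)=2$ with the $W_-$ ceiling ($24\pi^2 > \frac{68}{3}\pi^2$ fails) and $b_2(X)=1$ with the trace-free Ricci ceiling. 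The paper is more economical and avoids the case split entirely: Proposition \ref{neg} (negative intersection form) alone gives $\tau(\hat X)=-b_2(X)$, hence $\int_X |W_-|^2\,d\mu = 12\pi^2 b_2(X)$, and (\ref{gammab}) with $|\Gamma|=1$ then yields $\int_X |\mathring{r}|^2\,d\mu = 8\pi^2 b_2(X)$ for \emph{every} value of $b_2$; the single ceiling $\int_X|\mathring{r}|^2 \leq 8\pi^2(1-\delta)$ then forces $b_2(X)=0$, whereupon $\mathring{r}\equiv 0$ and $W_-\equiv 0$ make $g_\infty$ flat, contradicting its being a deepest bubble. So the paper needs neither Lemma \ref{b21} nor Lemma \ref{b22}, and its $b_2=0$ endgame is precisely the flatness argument that you implicitly outsource to Lemma \ref{bottom}. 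What your version buys is concreteness --- it identifies the only delicate case as the Burns-type ${\mathcal O}(-1)$ bubble and shows the $b_2=2$ case dies on Weyl energy alone, with no need for the $\delta$-gap there --- and your closing remarks correctly locate the one place where the uniform gap is genuinely indispensable, namely the razor-thin margin between $8\pi^2$ and $8\pi^2(1-\delta)$ in the rank-one case (which, in the paper's uniform version, is the margin for all $b_2\geq 1$).
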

\begin{proof}
Suppose we had such a bubble with $\Gamma =1$. Then, by Propositions \ref{reg} and \ref{neg}, 
 the one-point compactification  $\hat{X}= X\cup  \{ \infty\}$
of $X$ is a compact anti-self-dual $4$-manifold
with negative intersection form. The signature formula 
$$
\tau (\hat{X}) = \frac{1}{12\pi^2}\int_{\hat{X}} \left(
 |W_+|^2-|W_-|^2  \right)d\mu_{\hat{g}}
$$
and the conformal invariance of ${\mathcal W}$ therefore give us 
$$
\int_{X}
|W_-|^2  d\mu_{{g}_\infty} = 12\pi^2 b_2 (X).
$$
On the other hand, Lemma \ref{bub2} tells us that 
$b_1=b_3=0$, so (\ref{gammab}) with $|\Gamma |=1$ becomes 
$$
 \int_X \left(|W_-|^2 - \frac{|\mathring{r}|^2}{2}
\right) d\mu_{g_\infty} = 8\pi^2 b_2 (X)
$$
and we therefore conclude that 
$$
 \int_X  |\mathring{r}|^2 d\mu_{g_\infty} = 8\pi^2 b_2 (X). 
$$
But  our assumptions imply that 
  $$
  \limsup_{i\to\infty} \int_M {|\mathring{r}|^2} d\mu_{g_i} \leq  8\pi^2(1-\delta),$$
  and Lemma \ref{bub} tells us   that  
  $$  \int_X {|\mathring{r}|^2} d\mu_{g_\infty} \leq \limsup_{i\to\infty}
   \int_M {|\mathring{r}|^2} d\mu_{g_i}
  < 8\pi^2$$
 so  we must have $b_2(X)=0$. But  this now implies that both $\mathring{r}$
 and $W_-$  vanish identically, forcing $g_\infty$ to  be flat. However, this is impossible,
 since $(X, g_\infty)$ is  a deepest bubble.   Thus deepest
 bubbles with $\Gamma =\{ 1\}$ cannot arise in the present context. 
 \end{proof}


Similar reasoning gives us: 

\begin{lem} \label{swap} 
Let $g_i$ and $(X, g_\infty)$ be as in Lemma \ref{nae}. 
Then $X$ is diffeomorphic to a region of $M$
which is invariant under $F_1\leftrightarrow F_2$, and this 
$\ZZ_2$-action  induces  a holomorphic  isometric involution 
of $(X, g_\infty)$. 
\end{lem}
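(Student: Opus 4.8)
The plan is to show that the deepest bubble $(X,g_\infty)$ must be invariant under the bilateral symmetry, by ruling out the alternative supplied by Lemma \ref{puck}. Lemma \ref{puck} gives a clean dichotomy: either the open subset of $M$ diffeomorphic to $X$ (from Lemma \ref{bub2}) can be taken to be $F_1\leftrightarrow F_2$-invariant, in which case the involution descends to an isometric holomorphic $\ZZ_2$-action on $(X,g_\infty)$ and we are done, or else it cannot, in which case curvature accumulates in at least two regions and we gain a \emph{factor of two} in the curvature inequalities:
\begin{eqnarray*}
2\int_X |\mathring{r}|^2\, d\mu_{g_\infty} &\leq& \limsup_{i\to\infty}\int_M |\mathring{r}|^2\, d\mu_{g_i}, \\
2\int_X |W_-|^2\, d\mu_{g_\infty} &\leq& \limsup_{i\to\infty}\int_M |W_-|^2\, d\mu_{g_i}.
\end{eqnarray*}
So the whole argument reduces to deriving a contradiction from this factor of two, using the same energy bookkeeping as in Lemma \ref{nae}.

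The key step is to quantify the minimum amount of curvature energy a deepest bubble must carry. By Lemma \ref{nae}, the group at infinity satisfies $\Gamma\neq\{1\}$, so $|\Gamma|\geq 2$ and hence $1/|\Gamma|\leq 1/2$. Combining the signature formula for the orbifold compactification $\hat{X}$ (which has negative definite intersection form by Proposition \ref{neg}, giving $\int_X|W_-|^2\, d\mu_{g_\infty} = 12\pi^2 b_2(X)$, with the orbifold correction entering through $\tau(\hat X)$) with the Gauss-Bonnet identity (\ref{gammab}), I would extract a lower bound on $\int_X|\mathring{r}|^2\, d\mu_{g_\infty}$ in terms of $b_2(X)$ and $|\Gamma|$. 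Since Lemma \ref{bottom} forces $b_2(X)\geq 1$, this yields a definite positive lower bound on the trace-free Ricci energy of any deepest bubble arising here.

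I would then feed this lower bound into the factor-of-two inequality. On the one hand, the computation opening \S 7 shows that our metrics satisfy $\int_M|\mathring{r}|^2\, d\mu < 8\pi^2$, so the factor-of-two inequality forces $\int_X|\mathring{r}|^2\, d\mu_{g_\infty} < 4\pi^2$. On the other hand, the orbifold Gauss-Bonnet/signature bookkeeping for a bubble with $b_2(X)\geq 1$ and $|\Gamma|\geq 2$ should produce a trace-free Ricci energy of at least roughly $4\pi^2$ — the same structure as in Lemma \ref{nae} but now with the $1/|\Gamma|$ correction replacing the $-1$ one gets for $\Gamma=\{1\}$. The halving of the available budget is exactly what makes these two bounds incompatible, so the non-symmetric alternative of Lemma \ref{puck} cannot occur, and $X$ must be realizable as an $F_1\leftrightarrow F_2$-invariant region.

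The main obstacle is to verify that the arithmetic actually closes: the lower bound on $\int_X|\mathring{r}|^2$ depends on both $b_2(X)$ and $|\Gamma|$, and I would need the worst case (smallest $b_2$, smallest forced energy) to still exceed the halved budget of $4\pi^2$. This requires being careful about how $b_2(X)$ and $|\Gamma|$ are linked — Lemmas \ref{b21} and \ref{b22} tell us that $b_2(X)=1$ forces $\Gamma=\ZZ_k$ with $k\geq 1$, while $b_2(X)=2$ forces $\Gamma=\ZZ_{k\ell-1}$ with $k\geq 2$, $\ell\geq 1$ — and checking that in every admissible configuration the energy inequality is violated. The final step is to note that once the non-symmetric case is excluded, the symmetric alternative of Lemma \ref{puck} means $X$ is diffeomorphic to an $F_1\leftrightarrow F_2$-invariant region, so the $\ZZ_2$-action passes to the pointed Gromov-Hausdorff limit as a holomorphic isometric involution of $(X,g_\infty)$, which is precisely the claim.
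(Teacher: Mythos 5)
Your overall skeleton matches the paper's --- reduce everything to the dichotomy of Lemma \ref{puck}, and show that the non-symmetric alternative would require two disjoint copies of the bubble's curvature, exceeding the total energy budget --- but the quantity you chose to budget cannot carry the argument. There is \emph{no} positive lower bound on $\int_X |\mathring{r}|^2\, d\mu_{g_\infty}$ for the deepest bubbles at hand: the Eguchi-Hanson metric on the total space of ${\mathcal O}(-2)\to\CP_1$ is a toric, scalar-flat K\"ahler ALE manifold with $b_2=1$ and $\Gamma=\ZZ_2\subset SU(2)$, and it is hyper-K\"ahler, hence Ricci-flat, so its trace-free Ricci energy is exactly zero. (This is precisely case (ii) of the paper's subsequent case analysis, which is excluded there not by energy bookkeeping but by the arithmetic/area argument with $\ZZ_2$-invariant homology classes.) Your proposed bound also fails for $b_2=1$, $\Gamma=\ZZ_3$: the computation in Lemma \ref{sprite} gives $\int_X |\mathring{r}|^2 = 8\pi^2(k-2)^2/k = 8\pi^2/3 < 4\pi^2$ when $k=3$. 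Moreover, the identity you want to feed into the bookkeeping, $\int_X |W_-|^2 = 12\pi^2 b_2(X)$, is false whenever $\Gamma\neq\{1\}$: the signature formula for the orbifold compactification acquires a defect term depending on $\Gamma$, which is exactly why the paper invokes that identity only in the proof of Lemma \ref{nae}, where $\Gamma=\{1\}$ and $\hat{X}$ is a smooth manifold. (For instance, combining $\int_X|\mathring{r}|^2 = 8\pi^2/3$ with (\ref{gammab}) for the $k=3$ bubble yields $\int_X |W_-|^2 = 44\pi^2/3 \neq 12\pi^2$.)

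The bookkeeping that works uses $W_-$ and needs neither the signature formula nor any control of $\mathring{r}$: since $\int_X |\mathring{r}|^2/2$ enters (\ref{gammab}) with a minus sign, one may simply discard it to obtain
$$
\int_X |W_-|^2\, d\mu_{g_\infty} \;\geq\; 8\pi^2\left(\chi(X) - \frac{1}{|\Gamma|}\right) \;\geq\; 8\pi^2\left(2-\frac{1}{2}\right) \;=\; 12\pi^2 ,
$$
using $\chi(X)=1+b_2(X)\geq 2$ (Lemmas \ref{bub2} and \ref{bottom}) and $|\Gamma|\geq 2$ (Lemma \ref{nae}); this holds for every deepest bubble, Ricci-flat or not. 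The non-symmetric alternative of Lemma \ref{puck} would then demand at least $24\pi^2$ of $W_-$ energy, while inequality (\ref{lid}) caps the supply at less than $23\pi^2$. That is the contradiction, and the invariance of the region --- hence the induced holomorphic isometric involution of $(X,g_\infty)$ --- follows. So your strategy is the right one, but it must be run with the anti-self-dual Weyl energy rather than the trace-free Ricci energy.
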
 
\begin{proof} 
If $(X, g_\infty)$ is a deepest bubble arising as in Lemma \ref{nae},
we now know that  $\Gamma \neq \{ 1\}$, and hence 
$|\Gamma|\geq 2$. Moreover, $b_1(X)=b_3(X)=0$
by Lemma \ref{bub2} and $b_2\neq 0$ by Lemma \ref{bottom}.
Hence $\chi (X) \geq 2$, and 
$$
\int_X |W_-|^2d\mu \geq 8\pi^2 (2-\frac{1}{2}) \geq 12\pi^2. 
$$
 Since inequality (\ref{lid}) tells us that 
we have $< 23\pi^2$ worth of $\|W_-\|^2$ to bubble away,
Lemma \ref{puck} therefore 
shows
that   $F_1\leftrightarrow F_2$ must induce an 
isometry of $X$, and that $X$ is actually diffeomorphic to a region of $M$
which is invariant under the corresponding $\ZZ_2$-action. 
\end{proof} 

 \begin{lem} \label{elf} 
 Let $g_i$ and $(X, g_\infty)$ be as in Lemma \ref{nae}. 
 If $b_2(X)=2$, then $\Gamma \cong \ZZ_3$, and 
$X$ has intersection form 
  $$\left(\begin{array}{rr}-2 & 1 \\1 & -2\end{array}\right).$$
  \end{lem}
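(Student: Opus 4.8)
The plan is to pin down the integers $k$ and $\ell$ of Lemma~\ref{b22} by playing the global curvature budget against the bilateral symmetry. By Lemma~\ref{b22}, when $b_2(X)=2$ the intersection form is $\left(\begin{smallmatrix}-k & 1\\ 1 & -\ell\end{smallmatrix}\right)$ with $k\geq 2$, $\ell\geq 1$, the group at infinity is $\ZZ_{k\ell-1}$, and $H_2(X,\ZZ)$ is spanned by two totally geodesic holomorphic $\CP_1$'s $C_1,C_2$ with $[C_1]^2=-k$ and $[C_2]^2=-\ell$. So everything reduces to showing $k=\ell=2$.

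First I would extract a crude numerical bound from Gauss--Bonnet. Since $b_1(X)=b_3(X)=0$ by Lemma~\ref{bub2} and $b_2(X)=2$ by hypothesis, the open manifold $X$ has $\chi(X)=3$. Substituting $|\Gamma|=k\ell-1$ into the scalar-flat identity (\ref{gammab}) gives
$$\int_X |W_-|^2\, d\mu = 8\pi^2\Big(3-\frac{1}{k\ell-1}\Big)+\frac{1}{2}\int_X |\mathring{r}|^2\, d\mu \;\geq\; 8\pi^2\Big(3-\frac{1}{k\ell-1}\Big).$$
On the other hand, because $X$ is $F_1\leftrightarrow F_2$-invariant by Lemma~\ref{swap}, we are \emph{not} in the doubling situation of Lemma~\ref{puck}, so Lemma~\ref{bub} bounds $\int_X|W_-|^2$ by $\limsup_{i}\int_M|W_-|^2$ with no factor of $2$, and (\ref{lid}) bounds the latter by $23\pi^2$. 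Comparing the two estimates yields $3-\tfrac{1}{k\ell-1}<\tfrac{23}{8}$, hence $k\ell\leq 8$.

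The decisive step, which I expect to be the main obstacle, is to use the symmetry to force $k=\ell$. The holomorphic isometric involution $\sigma$ of Lemma~\ref{swap} permutes the finite collection of compact totally geodesic holomorphic curves of $X$, i.e.\ the pair $\{C_1,C_2\}$; the issue is to show that it genuinely interchanges them rather than fixing each. For this I would track $\sigma$ through the toric picture of Proposition~\ref{toric}. Since $F_1\leftrightarrow F_2$ interchanges the two circle factors of the torus, $\sigma$ acts on the moment image in $\RR^2$ by the reflection $(t_1,t_2)\mapsto(t_2,t_1)$. This reflection swaps the two unbounded boundary rays, which arise from the fixed loci of $\eta$ and $\tilde{\eta}$; as a symmetry of the convex moment region that interchanges its two ends, it must reverse the order of the two interior edges, and hence interchange the segments corresponding to $C_1$ and $C_2$. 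Thus $\sigma(C_1)=C_2$, and since $\sigma$ preserves intersection numbers we conclude $-k=[C_1]^2=[C_2]^2=-\ell$, i.e.\ $k=\ell$.

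Finally, combining $k=\ell$ with $k\ell=k^2\leq 8$ and $k\geq 2$ forces $k=\ell=2$. The intersection form is then $\left(\begin{smallmatrix}-2 & 1\\ 1 & -2\end{smallmatrix}\right)$ and $\Gamma\cong\ZZ_{k\ell-1}=\ZZ_3$, as claimed. The delicate accounting to watch throughout is whether a given curvature inequality carries the factor of $2$ from Lemma~\ref{puck}: it is essential here that the symmetric alternative of Lemma~\ref{swap} holds, so that the full $23\pi^2$ of anti-self-dual energy---rather than half of it---is available to absorb the topology of $X$.
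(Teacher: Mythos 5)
Your proof is correct and follows essentially the same route as the paper's: the corrected Gauss--Bonnet formula (\ref{gammab}) plus the $W_-$ energy budget from (\ref{lid}) and Lemma \ref{bub} bounds $k\ell$, the bilateral symmetry of Lemma \ref{swap} forces the intersection form to be symmetric ($k=\ell$), and the two constraints together leave only $k=\ell=2$, $\Gamma\cong\ZZ_3$. The only differences are minor: the paper uses the sharper bound $\frac{1}{8\pi^2}\int_M|W_-|^2\,d\mu<\frac{17}{6}$ to get $|\Gamma|\leq 5$ rather than your $k\ell\leq 8$ (both suffice once $k=\ell$), and where the paper simply asserts that the involution of Lemma \ref{swap} interchanges the two generating $\CP_1$'s, you supply a toric moment-map argument for that interchange---a worthwhile fleshing-out of a step the paper leaves implicit.
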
 
  \begin{proof}
  In conjunction with inequality (\ref{lid}), 
  Lemma \ref{bub} 
tells us that 
$$
\frac{17}{6}  >  \frac{1}{8\pi^2} \int_M |W_-|^2d\mu \geq 1+b_2(X)-\frac{1}{|\Gamma |} 
$$
so that 
$$
\frac{11}{6} + \frac{1}{|\Gamma |}  >  b_2 (X). 
$$
When $b_2(X) =2$, we thus have $|\Gamma |\leq 5$. 
  
But when $b_2(X)=2$, Lemma \ref{b22} tells us that 
the intersection form  is
$$\left(\begin{array}{cc}-k & 1 \\1 & -\ell\end{array}\right)$$
for some $k\geq 2$,  $\ell \geq 1$, and that $\Gamma = \ZZ_{k\ell -1}$. 
But   Lemma \ref{swap} tells us that we have a $\ZZ_2$ action which interchanges the two
totally geodesic  $\CP_1$'s which generate $H^2(X,\ZZ)$. 
The intersection form must therefore be given 
 by 
 $$\left(\begin{array}{cc}-k & 1 \\1 & -k\end{array}\right)$$
 for some $k\geq 2$, and  $\Gamma = \ZZ_{k^2-1}$. 
 But we have also just seen that $|\Gamma|\leq 5$, 
 so it follows that 
  $k=2$ is 
 the only possibility. 
  \end{proof}

 \begin{lem}  \label{sprite} 
 Let $g_i$ and $(X, g_\infty)$ be as in Lemma \ref{nae}. 
 If $b_2(X)=1$, then $X$ must be diffeomorphic to the line bundle of 
 degree $-2$ or $-3$ over $\CP_1$. 
 \end{lem}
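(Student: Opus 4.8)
The plan is to pin down the integer $k$ of Lemma~\ref{b21} by computing the $L^2$-norm of the trace-free Ricci curvature of the bubble exactly as a function of $k$, and then invoking the a priori bound on this quantity supplied by Lemma~\ref{bub}. By Lemma~\ref{b21}, the hypothesis $b_2(X)=1$ means that $X$ is diffeomorphic to the total space of a line bundle $\mathcal{O}(-k)\to \CP_1$, with $\Gamma\cong\ZZ_k$ and intersection form $(-k)$, so in particular $\chi(X)=2$; and by Lemma~\ref{nae} we already know $\Gamma\neq\{1\}$, i.e. $k\geq 2$. The entire content of the lemma is therefore the upper bound $k\leq 3$. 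The only analytic input I need is the curvature estimate: since ${\mathcal A}(g_i)<8$ forces $\int_M|\mathring{r}|^2\,d\mu<8\pi^2$ for each metric in the sequence, Lemma~\ref{bub} gives
\begin{equation*}
\int_X|\mathring{r}|^2\,d\mu_{g_\infty}<8\pi^2 .
\end{equation*}

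First I would compute $\int_X|W_-|^2\,d\mu$. Because $(X,g_\infty)$ is scalar-flat K\"ahler it is anti-self-dual with $W_+\equiv 0$, and its orbifold compactification $\hat{X}=X\cup\{\infty\}$ carries a single orbifold point modelled on $\CC^2/\ZZ_k$. Applying the signature theorem to $\hat{X}$, whose intersection form is negative definite of rank one so that $\tau(\hat{X})=-1$, I would use that the correction term at the $\CC^2/\ZZ_k$ singularity is the signature defect of the lens space $S^3/\ZZ_k=L(k,1)$, namely the Dedekind-sum expression $\frac{1}{k}\sum_{j=1}^{k-1}\cot^2(\pi j/k)=\frac{(k-1)(k-2)}{3k}$. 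This yields
\begin{equation*}
\int_X|W_-|^2\,d\mu_{g_\infty}=12\pi^2+4\pi^2\,\frac{(k-1)(k-2)}{k}.
\end{equation*}
Feeding this into the ALE Gauss-Bonnet identity (\ref{gammab}), which for a scalar-flat K\"ahler bubble reads $\frac{1}{8\pi^2}\int_X(|W_-|^2-\frac12|\mathring{r}|^2)\,d\mu=\chi(X)-\frac{1}{|\Gamma|}=2-\frac{1}{k}$, and solving for the Ricci term, I expect the clean answer
\begin{equation*}
\int_X|\mathring{r}|^2\,d\mu_{g_\infty}=\frac{8\pi^2(k-2)^2}{k}.
\end{equation*}
As a consistency check this vanishes at $k=2$, as it must, since the degree $-2$ bubble is the Ricci-flat Eguchi-Hanson space.

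Comparing the two displays, the bound $\int_X|\mathring{r}|^2<8\pi^2$ becomes $(k-2)^2<k$, i.e. $(k-1)(k-4)<0$, so that $2\le k<4$ and hence $k\in\{2,3\}$. Thus $X$ is the line bundle of degree $-2$ or $-3$ over $\CP_1$, as asserted. The one genuinely delicate point is the middle paragraph: one must take the correct sign and normalisation of the boundary (eta-invariant) correction in the signature formula for the ALE orbifold, since the opposite sign would force $\int_X|\mathring{r}|^2$ to be negative. The Eguchi-Hanson case $k=2$, where $\mathring{r}\equiv 0$, is a convenient anchor for fixing this sign; once it is fixed, the remainder of the argument is elementary. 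As an alternative to the defect computation, one could instead appeal to the fact (from Proposition~\ref{toric}) that these toric scalar-flat K\"ahler metrics are the explicit ones on $\mathcal{O}(-k)$, whose curvature integrals are known in closed form, but the signature route is more self-contained.
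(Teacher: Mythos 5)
Your proof is correct, and it arrives at exactly the same key identity as the paper---$\int_X|\mathring{r}|^2\,d\mu_{g_\infty}=8\pi^2(k-2)^2/k$---and the same final inequality $(k-2)^2<k$, but by a genuinely different route. The paper's own proof stays entirely inside complex geometry: if $C$ denotes the zero section of the degree $-k$ line bundle, adjunction gives $c_1\cdot C=2-k$, so the Poincar\'e dual of $c_1$ is $[(k-2)/k]\,C$ and $c_1^2=-(k-2)^2/k$; since the Ricci form of a scalar-flat K\"ahler ALE surface is an $L^2$ anti-self-dual harmonic representative of $2\pi c_1$, this yields $\int_X|\mathring{r}|^2\,d\mu=-8\pi^2c_1^2$ in one line, with no index-theoretic input and no appeal to (\ref{gammab}). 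You instead compute $\int_X|W_-|^2$ from the orbifold signature theorem with the lens-space defect $\frac{(k-1)(k-2)}{3k}$ and then solve (\ref{gammab}) for the Ricci term; both of your displayed formulas are correct and are equivalent to the paper's identity via (\ref{gammab}). What the paper's route buys is brevity and complete avoidance of the sign and normalization of the eta-type correction, which you rightly flag as the delicate point; what your route buys is that it uses only the Riemannian/ALE structure rather than the K\"ahler identification of $c_1$. One caveat: your proposed sign anchor does not actually work, because the defect $(k-1)(k-2)/3k$ \emph{vanishes} at $k=2$, so the Eguchi--Hanson case is consistent with either sign. The sign is instead pinned down by your other criterion---positivity of $\int|\mathring{r}|^2$, tested against an example where the defect is nonzero, e.g.\ LeBrun's explicit scalar-flat K\"ahler metrics on the degree $-k$ bundles \cite{lpa} with $k\geq 3$---or by quoting the $G$-signature/APS computation outright. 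A last minor difference: the paper never needs $k\geq 2$ from Lemma \ref{nae}, since $k=1$ gives $(k-2)^2/k=1$, which already violates the strict bound coming from Lemma \ref{bub}.
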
 
\begin{proof}
By Lemma \ref{b21},    $X$ must be diffeomorphic to the line bundle of 
degree $-k$ over $\CP_1$ for some $k > 0$. 
If $C$ denotes the homology class of the zero section, the Poincar\'e
dual of $c_1$ is  the rational homology class $[(k-2)/k]C$, 
and it follows that the limit metric satisfies 
$$
\int_X |\mathring{r}|^2 d\mu_{g_\infty} = -8\pi^2 c_1^2 = 8\pi^2 \frac{(k-2)^2}{k} . 
$$
Since this number must be less that $8\pi^2$ by Lemma \ref{bub}, it follows that 
$k=2$ or $3$. 
\end{proof}

Thus, in light of Lemmas \ref{bub2}, \ref{bottom}, \ref{elf}, and \ref{sprite}, 
only three cases  still remain:
\begin{description}
\item{(i)} $b_2(X)=1$, $\Gamma =\ZZ_3$; 
\item{(ii)} $b_2 (X)=1$, $\Gamma = \ZZ_2$; and 
\item{(iii)} $b_2 (X) = 2$, $\Gamma = \ZZ_3$. 
\end{description}

The first of these, however, is easy to eliminate: 

\begin{lem} In the situation of Lemma \ref{nae},
$X$ cannot be as in  case {\em (i)} above. 
\end{lem}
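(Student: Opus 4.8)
The plan is to exploit the bilateral symmetry supplied by Lemma \ref{swap}, since the curvature budget by itself is not quite enough to eliminate this case. Indeed, one checks that a deepest bubble of type (i) has $\int_X |\mathring{r}|^2 = \tfrac{8}{3}\pi^2 < 8\pi^2$ (by the computation in Lemma \ref{sprite} with $k=3$), and then, via (\ref{gammab}) together with this value, $\int_X |W_-|^2 = \tfrac{44}{3}\pi^2 < 23\pi^2$, so neither inequality of Lemma \ref{bub} is violated. Instead I would extract a purely homological contradiction from the induced $\ZZ_2$-action.

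First I would recall from Lemma \ref{b21} that in case (i) the bubble $X$ is diffeomorphic to the total space of $\mathcal{O}(-3)\to \CP_1$, with $H_2(X;\ZZ)=\ZZ$ generated by the zero-section class $C$, which satisfies $C\cdot C=-3$ and is represented by the unique compact complex curve in $X$. By Lemma \ref{swap}, $X$ may be realized as a region of $M$ invariant under $F_1\leftrightarrow F_2$, so the global involution $\sigma$ restricts to $X$; being holomorphic, it carries the zero section to itself and hence fixes its class, $\sigma_* C = C$. Through the inclusion $H_2(X)\hookrightarrow H_2(M)$ of Lemma \ref{bub2}, this forces $C$ to lie in the $(+1)$-eigenspace of $F_1\leftrightarrow F_2$ acting on $H_2(M)$.

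The remaining step is an elementary lattice computation. Writing $M=\CP_2\#2\overline{\CP}_2$ in the basis $H,e_1,e_2$ coming from $\CP_2$ blown up at two points (so $H^2=1$ and $e_i^2=-1$), the $(-1)$-curves $F_1-E$ and $F_2-E$ are the exceptional classes $e_1,e_2$, whence $F_1=H-e_2$ and $F_2=H-e_1$; thus the involution acts by $e_1\leftrightarrow e_2$ with $H$ fixed. Its invariant sublattice is $\{aH+b(e_1+e_2)\}$, on which the intersection form restricts to $a^2-2b^2$. But $a^2-2b^2=-3$ has no integer solution (reducing mod $8$ rules out $-3\equiv 5$; equivalently, $a$ must be odd, which forces $m^2+m+1$ to be even, a contradiction). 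This is incompatible with $C\cdot C=-3$, so case (i) cannot occur.

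The only real subtlety, and the step I would be most careful about, is justifying $\sigma_* C = C$ rather than merely $\sigma_* C=\pm C$: this holds because $C$ is the class of the unique compact holomorphic curve of $X$ and $\sigma$ is holomorphic, so it maps this curve to itself with its complex orientation. Everything else reduces to the observation that the symmetric part of the intersection lattice of $M$ contains no vector of square $-3$.
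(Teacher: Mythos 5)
Your proof is correct and takes essentially the same approach as the paper's: both use Lemmas \ref{swap} and \ref{sprite} to produce a $\ZZ_2$-invariant homology class of self-intersection $-3$ in $H_2(M,\ZZ)$, and then rule it out by elementary arithmetic on the invariant sublattice. The differences are cosmetic---you compute in the basis $H,e_1,e_2$ and reduce mod $8$, whereas the paper writes the class as $mF_1+mF_2+nE$ and runs a mod-$3$ descent on $2m^2-n^2=-3$; your mod-$8$ computation is valid on its own (the garbled ``equivalently\ldots'' aside should simply be deleted), and your extra care in checking that the involution fixes, rather than negates, the class of the zero section is a sound refinement the paper leaves implicit.
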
 
\begin{proof} 
Suppose $X$ were as in case (i). Then, by Lemmas \ref{swap} and \ref{sprite},
$M= \CP_2\# 2\overline{\CP}_2$ would contain  a smoothly embedded 
$2$-sphere $S$ of self-intersection $-3$ whose homology class was
invariant under $F_1\leftrightarrow F_2$. This 
$\ZZ_2$-invariance of 
$[S]\in H_2 (M, \ZZ)$  
would then allow us to express this homology class as 
$$[S] = m F_1 + m F_2 + n E$$
for some integers $m$ and $n$, and the self-intersection condition would then become
$$
-3= 2m^2 - n^2. 
$$
But reducing this mod $3$ gives us 
$$m^2 + n^2 \equiv 0 \bmod 3.$$
Since $m^2, n^2 \equiv 0 \mbox{ or } 1 \bmod 3$, this can only happen if 
$m, n \equiv 0\bmod 3$. But now setting $m=3 j$, $n=3k$, we then 
have 
$$-1= 6j^2 - 3 k^2$$
and reducing mod $3$ again then yields a contradiction. 
Thus case (i) cannot arise. 
\end{proof}

Eliminating the remaining two cases is not much harder, but 
does use considerably more of the information available to us. 

\begin{lem} In the situation of Lemma \ref{nae},
$X$ cannot be as in either of cases {\em (ii)} or {\em (iii)}  above. 
\end{lem}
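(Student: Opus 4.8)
The plan is to eliminate cases (ii) and (iii) by combining the topological restrictions already in hand with the bilateral $\ZZ_2$-symmetry supplied by Lemma \ref{swap}. First I would record that in both remaining cases the bubble $(X,g_\infty)$ is actually Ricci-flat. In case (ii) this is immediate from the computation in the proof of Lemma \ref{sprite}: with $k=2$ one gets $\int_X|\mathring{r}|^2\,d\mu = 8\pi^2(k-2)^2/k = 0$. In case (iii) the two totally geodesic holomorphic $(-2)$-spheres $C_1,C_2$ satisfy $K_X\cdot[C_i]=0$ by adjunction in $X$ (genus $0$, self-intersection $-2$); since the form $\left(\begin{smallmatrix}-2&1\\1&-2\end{smallmatrix}\right)$ is nondegenerate this forces $c_1(X)=0$ rationally, whence $\int_X|\mathring{r}|^2\,d\mu=-8\pi^2c_1^2(X)=0$. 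Thus $X$ is a hyperkähler ALE space — the Eguchi-Hanson ($A_1$) instanton in case (ii) and the $A_2$ gravitational instanton in case (iii) — and by Lemma \ref{swap} the flip $\sigma:F_1\leftrightarrow F_2$ induces a holomorphic isometric involution $\iota$ of $X$, with $X$ diffeomorphic to a $\sigma$-invariant region of $M$.

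Case (iii) I would then dispatch cleanly. The moment-map picture of Proposition \ref{toric} realizes $C_1$ and $C_2$ as the preimages of the two boundary segments of the image polygon, and the bilateral flip reverses this polygon, so $\iota$ interchanges $C_1$ and $C_2$. Consequently $[C_1]-[C_2]$ is an \emph{integral} homology class on $M$ that is anti-invariant under $\sigma_*$. A direct computation gives $([C_1]-[C_2])^2=[C_1]^2-2[C_1]\cdot[C_2]+[C_2]^2=-2-2-2=-6$. But the $\sigma_*$-anti-invariant sublattice of $H_2(M,\ZZ)$ is precisely $\ZZ\,(F_1-F_2)$, on which the intersection form is $a(F_1-F_2)\mapsto-2a^2$; since $-6=-2a^2$ has no integer solution, case (iii) cannot occur.

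Case (ii) is the genuine difficulty, since here $H_2(X)=\ZZ[S]$ is generated by a single $\sigma$-invariant $(-2)$-sphere and the analogous arithmetic ($2m^2-n^2=-2$, with $[S]=m(F_1+F_2)+nE$) does admit integer solutions. Because $\iota$ is holomorphic it sends the effective class $[S]$ to itself, so $\iota$ either fixes $S$ setwise with two isolated fixed points or fixes a totally geodesic surface. I would first rule out a fixed surface: such a surface would map into the $\sigma$-fixed locus of $M$, whose only two-dimensional component is the proper transform $\tilde{\Delta}$ of the diagonal of $\CP_1\times\CP_1$, with $[\tilde{\Delta}]^2=+1$, which is incompatible with the negative-definite form on $X$. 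Since $L(\iota)=1+1=2$, a nontrivial $\iota$ then has exactly two isolated fixed points, which must descend from isolated $\sigma$-fixed points of $M$ — but $M$ has only one (the antidiagonal tangent direction on the exceptional divisor), a contradiction. The remaining possibility is that $\iota$ acts trivially, forcing the bubble to sit at that single isolated fixed point with $\Gamma$ equal to the center $\{\pm1\}\subset SU(2)$; in this branch the limit orbifold is obtained from $M$ by contracting $S$ to a rational double point, so $S$ is a genuine holomorphic $(-2)$-curve in $M$. Adjunction then yields $K_M\cdot[S]=0$, contradicting the ampleness of $-K_M$ on the del Pezzo surface $M$.

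The hard part is clearly case (ii). Whereas (i) and (iii) reduce to a one-line lattice computation, the single-curve case carries no such obstruction and forces one to correlate the fixed-point data of the induced involution $\iota$ with the explicit structure of $\mathrm{Fix}(\sigma)\subset M$. The two delicate points are (a) showing that isolated fixed points of $\iota$ genuinely descend from isolated fixed points of $\sigma$ under the pointed Gromov-Hausdorff convergence — so that the count ``$M$ has a unique isolated $\sigma$-fixed point'' may be invoked — and (b) justifying, in the degenerate branch where $\iota$ is trivial, that the limit orbifold really is the rational-double-point contraction of $S$, so that $S$ becomes holomorphic and adjunction applies. I expect (a) to be the main technical obstacle.
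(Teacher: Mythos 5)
Your case (iii) argument is essentially recoverable by lattice arithmetic (and is a genuinely different route from the paper's), though you should also treat the branch where $\iota_*$ acts as the identity on $H_2(X,\ZZ)$ rather than as the swap --- your moment-polygon justification for the swap is loose --- but in that branch both generators would be $\sigma_*$-invariant classes of square $-2$, i.e.\ of the form $m(F_1+F_2)+nE$ with $n$ even, so their intersection number would be even, contradicting the plumbing matrix entry $1$. The genuine gap is in case (ii), which you yourself identified as the hard part, and it is more basic than the two points (a), (b) you flagged. Your dichotomy for a nontrivial involution (``two isolated fixed points, or a fixed totally geodesic surface,'' the latter excluded by self-intersection) tacitly assumes that any fixed surface is \emph{compact}: only a compact surface defines a class in $H_2(X)$ to which the negative-definite intersection form can be applied, and only then could a Lefschetz-type count yield isolated fixed points. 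But holomorphic isometric involutions of the Eguchi-Hanson space typically have \emph{non-compact} totally geodesic fixed surfaces and no isolated fixed points at all: the lift of $(z_1,z_2)\mapsto(z_1,-z_2)$ from $\CC^2/\ZZ_2$ to the resolution is a holomorphic isometry whose fixed locus consists of two disjoint copies of $\CC$ (the proper transforms of the two axes), each of Euler characteristic one, meeting the zero section $S$ exactly at the two fixed points of $\iota|_S$. This is precisely the configuration your argument cannot exclude, and it is the one that would actually occur if curvature concentrates at a point of the $\sigma$-fixed diagonal $\tilde{\Delta}\subset M$: the rescaled limits of pieces of $\tilde{\Delta}$ then appear in the bubble as exactly such non-compact fixed surfaces. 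So case (ii) survives your proof.

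The paper eliminates (ii) and (iii) simultaneously using the one ingredient your proposal never invokes: collapsing areas. Since the bubble is a pointed limit of ever-larger rescalings, the generators of $H_2(X,\ZZ)$ are represented by spheres $S_i\subset M$ whose $g_i$-areas tend to zero; by Wirtinger's inequality the $\ZZ_2$-invariant class $\Sigma$ ($=[S_i]$ in case (ii), $=[S_i]+[\tilde{S}_i]$ in case (iii)) therefore satisfies $|[\omega_i]\cdot\Sigma|<2\area(S_i)\to 0$. On the other hand, writing $\Sigma=j(F_1+F_2)+kE$, the condition $\Sigma^2=-2$ gives $-1=j^2-2\ell^2$ with $k=2\ell$, which forces $\ell\neq 0$, $|j|\geq|\ell|\geq 1$, and $j$, $j+\ell$ of the same sign; hence
$$
\left|[\omega_i]\cdot\Sigma\right|=\frac{2\left|j+(j+\ell)x_i\right|}{\sqrt{1+2x_i+\tfrac{x_i^2}{2}}}>\frac{2}{1+K},
$$
uniformly, where $K$ bounds the $x_i$ because $f(x_i)<8$ while $f\to 9$ at infinity. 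The two estimates contradict each other. Note that your (correct) observation that $2m^2-n^2=-2$ admits integer solutions is exactly why topology and arithmetic alone cannot finish case (ii); the missing idea is that every such invariant solution pairs with the K\"ahler class in a way bounded uniformly below, while collapse forces that pairing to zero.
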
 
\begin{proof}
Since  the limit metric $g_\infty$ on $X$ is by construction a pointed limit 
 of larger and larger rescalings of the metrics $g_i$,  the generators of 
 $H_2(X,\ZZ)$ must arise from smooth $2$-spheres $S_i\subset M$ 
 whose areas with respect to the $g_i$ tend to zero as $i\to \infty$. 
   In case 
  (ii), let $S_i$ be the smooth $2$-sphere corresponding
  to the zero section $\CP_1$; in case (iii), let 
  $S_i$ be a $2$-sphere corresponding to one of the 
  two $\CP_1$ generators, and $\tilde{S}_i$ be its reflection under
  $F_1\leftrightarrow F_2$. Now 
  any of our unit-volume bilaterally symmetric K\"ahler classes $[\omega_i]$
  is  of the form 
  $$[\omega_i] = \frac{(1+x_i) (F_1+F_2) - x_i E}{\sqrt{1+2x_i + \frac{x_i^2}{2}}}$$
  and has ${\mathcal A}([\omega ]_i) = f(x_i) < 8$. Since $\lim_{x\to \infty} f(x) =9$, 
  we have 
  $x_i \in (0,K)$ for some fixed upper bound $K$.
 Choose some $i$ large enough so that the associated embedded 
  $2$-sphere $S_i\subset M$ has area $< (K+1)^{-1}$
 with respect to $g_i$.  In case (ii), we then set $\Sigma = [S_i]\in H^2 (M, \ZZ)$,
 while 
 in case (iii), we set $\Sigma = [S_i]+ [\tilde{S_i}]$. 
 Since
 $\Sigma$ is then  either represented $S_i$ or by $S_i$ together with  its reflection 
 $\tilde{S_i}$, we then have
 \begin{equation}
\label{tiny}
|[\omega_i]\cdot \Sigma | < 2 \area (S_i) < \frac{2}{K+1} 
\end{equation}
  by Wirtinger's inequality.

 Since the homology class $\Sigma$ is  $\ZZ_2$-invariant,  we have 
  $$\Sigma = j F_1 + jF_2 + k E$$
  for some integers $j$ and $k$. But we have arranged that $\Sigma^2 = -2$
  in either case (ii) or case (iii),
  so we obtain
  $$
  -2 = 2j^2 -k^2
  $$
  and $k\neq 0$ is therefore even, while $j$ is odd. Setting 
  $k=2\ell$ for some integer $\ell\neq 0$, we therefore have 
  $$-1= j^2 -2\ell^2$$
  and hence
  $$\left(\frac{j}{\ell} \right)^2= 2-\frac{1}{\ell^2}.$$
  In particular, this tells us that 
  $$\left|\frac{j}{\ell} \right|\geq 1,$$
  so $j$ and $j+\ell$ cannot have opposite signs, and $j\neq 0$.    
Hence 
\begin{eqnarray*}
\left| 
[\omega_i]\cdot \Sigma \right| 
&=& 
 \frac{\left| 2j (1+x_i) + kx_i\right|}{\sqrt{1+2x_i + \frac{x_i^2}{2}}}  = 
2 \frac{\left|  j +( j+ \ell)x_i \right| }{\sqrt{1+2x_i + \frac{x_i^2}{2}}}
\\ &>& \frac{2}{\sqrt{1+2x_i + \frac{x_i^2}{2}}} >   
\frac{2 }{1+x_i}
> \frac{2}{1+K}
\end{eqnarray*}
since $x_i \in (0, K)$. Hence (\ref{tiny}) implies  the glaring contradiction  
$$\frac{2}{K+1} > \left| [\omega_i]\cdot \Sigma \right| > \frac{2}{K+1}, $$
and it  follows that  cases (ii) and (iii)  never actually arise. 
  \end{proof}
  
  Since all possible deepest bubbles  have thus  been excluded,  no bubbling can occur, and 
  Theorem \ref{leap} therefore implies the following: 
  
  \begin{prop} \label{flex} 
  Let $g_i$ be a sequence of unit-volume bilaterally symmetric extremal K\"ahler metrics 
  on $(M,J)=\CP_2 \# 2\overline{\CP}_2$ such that the corresponding 
  K\"ahler classes $[\omega_i]$ all satisfy ${\mathcal A}([\omega_i]) \leq 8-\delta$
  for some $\delta > 0$.  
  Then there is a subsequence $g_{i_j}$  of metrics and a sequence of 
  diffeomorphisms $\Phi_j: M \to M$ such that
  $\Phi_j^*g_{i_j}$ converges in the smooth topology to an 
  extremal K\"ahler metric on the smooth $4$-manifold $M$ 
  compatible with some complex structure
  $\tilde{J}= \lim_{j\to \infty}\Phi_{j*}J$. 
      \end{prop}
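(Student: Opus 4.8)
The plan is to derive the proposition almost entirely from Theorem \ref{leap} together with the bubble-exclusion results assembled above, so that the only genuinely new work is the logical assembly and the verification that the limiting complex structure exists. First I would apply Theorem \ref{leap} to the sequence $g_i$, extracting a subsequence that converges in the Gromov--Hausdorff topology to an extremal K\"ahler metric on a compact complex $2$-orbifold. The entire content of the proposition is that this convergence can be upgraded to smooth convergence modulo diffeomorphisms, with limit defined on $M$ itself rather than on a proper orbifold. By the analysis of bubble formation---concretely, by Lemma \ref{bub}---the only way smooth convergence can fail is through curvature concentration, which after rescaling produces a nontrivial deepest bubble $(X,g_\infty)$: an ALE scalar-flat K\"ahler manifold arising as a pointed limit of rescalings of a subsequence. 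Thus it suffices to show that, under the hypothesis ${\mathcal A}([\omega_i])\leq 8-\delta$, no such deepest bubble can exist.

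Next I would invoke the structural restrictions already proved for any such bubble. Lemma \ref{bub2} gives $b_1(X)=b_3(X)=0$ and $b_2(X)\leq 2$, while Lemma \ref{bottom} rules out $b_2(X)=0$; hence $b_2(X)\in\{1,2\}$. The hypothesis ${\mathcal A}([\omega_i])\leq 8-\delta$ is exactly what powers Lemma \ref{nae}, which forces $\Gamma\neq\{1\}$ at infinity, and Lemma \ref{swap}, which shows $X$ is diffeomorphic to an $F_1\leftrightarrow F_2$-invariant region of $M$ carrying an induced holomorphic isometric involution. Combining these with the precise classifications in Lemmas \ref{sprite} and \ref{elf}, the possibilities collapse to exactly the three cases enumerated above: (i) $b_2=1,\ \Gamma=\ZZ_3$; (ii) $b_2=1,\ \Gamma=\ZZ_2$; and (iii) $b_2=2,\ \Gamma=\ZZ_3$.

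Each of these three cases is then eliminated by the two lemmas immediately preceding this proposition, whose arguments I would simply cite. Case (i) falls to a congruence obstruction: the $\ZZ_2$-invariant class $[S]=m(F_1+F_2)+nE$ with $[S]^2=-3$ leads, upon reduction mod $3$, to a contradiction. Cases (ii) and (iii) are excluded by combining the bilateral symmetry with a Wirtinger area estimate: the shrinking $2$-spheres that generate $H_2(X)$ force $|[\omega_i]\cdot\Sigma|$ to be arbitrarily small, whereas the explicit form of the bilaterally symmetric class together with $x_i\in(0,K)$ forces $|[\omega_i]\cdot\Sigma|>2/(1+K)$, a contradiction. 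With all three cases excluded, no deepest bubble exists.

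Finally, the non-existence of any deepest bubble means, as noted in the bubble-formation discussion, that there is a uniform bound on sectional curvature along the subsequence, and hence smooth convergence modulo diffeomorphisms. Pulling back by suitable $\Phi_j\colon M\to M$ thus yields $C^\infty$ convergence of $\Phi_j^*g_{i_j}$ to an extremal K\"ahler metric on $M$. Because each $J$ is $g_{i_j}$-parallel and the metrics converge smoothly, the pushed-forward complex structures $\Phi_{j*}J$ converge to a complex structure $\tilde J$ compatible with the limit metric, giving the stated conclusion. For this proposition the remaining obstacle is thus only bookkeeping together with the verification of complex-structure convergence; the genuine difficulty of the program has been front-loaded into the bubble classification and, above all, into the elimination of cases (ii) and (iii), where the interaction between the bilateral $\ZZ_2$-symmetry and the homology of $M=\CP_2\#2\overline{\CP}_2$ must be exploited with care.
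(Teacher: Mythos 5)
Your proposal is correct and follows essentially the same route as the paper: the paper itself deduces Proposition \ref{flex} directly from Theorem \ref{leap} once the case analysis of Lemmas \ref{bub}--\ref{sprite} and the two final elimination lemmas rule out every possible deepest bubble, exactly as you have assembled it. Your closing remark on the convergence of $\Phi_{j*}J$ (parallel compatible complex structures converging along with the metrics, after passing to a further subsequence) is a minor elaboration the paper leaves implicit, and it is fine as stated.
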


\section{The Proof of Theorem \ref{big}} 

In the previous section, we saw that sequences of 
bilaterally symmetric extremal K\"ahler metrics with 
${\mathcal A} < 8-\delta$ necessarily have subsequences which
converge as smooth metrics. We now use this to 
study the set of K\"ahler classes which admit extremal K\"ahler metrics. 

For any positive real number $x>0$, let
 $[\omega]_x$ denote the K\"ahler class $(1+x)(F_1+F_2) -x E$
 on $(M,J)=\CP_2 \# 2\overline{\CP}_2$, 
and let us once again consider the function 
$f(x)={\mathcal A}([\omega]_x)$
studied in \S \ref{action}. Set  $L$ denote the smallest positive number in
$f^{-1}(8)$, so that 
$(0,L)$ is a connected component\footnote{ 
We are actually convinced that there are no
other connected components, but  there is no pressing need to 
try to prove this here!} 
 of $f^{-1}[(0,8)]\cap \RR^+$. 
 
\begin{thm}\label{freeze} 
For every $x\in (0,L)$,  $[\omega]_x=(1+x)(F_1+F_2) -x E$
is the K\"ahler class of  an extremal K\"ahler metric on the complex
surface $M=\CP_2 \# 2\overline{\CP}_2$ obtained by blowing up $\CP_2$ at two distinct points. 
\end{thm}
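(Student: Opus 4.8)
The plan is to prove Theorem \ref{freeze} by a continuity argument on the connected interval $(0,L)$. Let $I\subset(0,L)$ be the set of $x$ for which $[\omega]_x$ carries an extremal K\"ahler metric on $(M,J)$; I will show $I$ is nonempty, open, and closed in $(0,L)$, so that connectedness forces $I=(0,L)$. Nonemptiness is immediate from Arezzo--Pacard--Singer \cite{arpasing}: after rescaling by $(1+x)^{-1}$ the class $[\omega]_x$ becomes $F_1+F_2-\tfrac{x}{1+x}E$, which for small $x>0$ is one of their classes $F_1+F_2-\epsilon E$, so small positive $x$ lie in $I$. Openness is equally cheap: the extremal cone is open in $H^{1,1}(M,\RR)$ \cite{ls2}, and since $x\mapsto[\omega]_x$ is a continuous curve into this cohomology, $I$ is its open preimage.

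The real content is closedness. Suppose $x_n\to x_*\in(0,L)$ with $x_n\in I$, and let $g_n$ be the unit-volume extremal K\"ahler metric in $[\omega]_{x_n}$. By uniqueness of extremal metrics in a fixed class \cite{xxgang} together with the $F_1\leftrightarrow F_2$-invariance of $[\omega]_{x_n}$, each $g_n$ is bilaterally symmetric. Since $f(0)=8$, $f'(0)<0$, and $L$ is the first positive zero of $f-8$, we have $f<8$ throughout $(0,L)$; continuity then gives $\mathcal A([\omega]_{x_n})=f(x_n)\le 8-\delta$ for all large $n$, with $\delta=\tfrac12(8-f(x_*))>0$. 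Thus Proposition \ref{flex} applies and yields, after passing to a subsequence and applying diffeomorphisms $\Phi_j$, smooth convergence of $\Phi_j^*g_{n_j}$ to an extremal K\"ahler metric $g_\infty$ on $M$ compatible with the limit complex structure $\tilde J=\lim\Phi_{j*}J$.

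The main obstacle is to identify this limit with the desired data: that $(M,\tilde J)$ is biholomorphic to $(M,J)$ and that $g_\infty$ realizes a scaling of $[\omega]_{x_*}$. A priori $\tilde J$ is only a $C^\infty$-limit of complex structures each biholomorphic to $J$, and the danger is a degeneration in which the two blown-up points of $\CP_2$ collide, producing a $(-2)$-curve in the anti-invariant class $E_1-E_2$. This is exactly where bilateral symmetry pays off: the $\ZZ_2$-symmetry persists in the limit as a holomorphic isometry of $(M,\tilde J)$, and it carries the $(-2)$-class to its negative; since an effective class and its negative cannot both be effective, no such holomorphic $(-2)$-curve can occur, so the points cannot collide. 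Hence $(M,\tilde J)$ is again $\CP_2$ blown up at two distinct points, which---all such pairs being projectively equivalent---is biholomorphic to $(M,J)$.

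Finally, I would pin down the K\"ahler class. The exceptional $(-1)$-curves $E$ and $F_1-E$ are the unique holomorphic representatives of their negative homology classes, so they persist under the smooth convergence, and their $g_\infty$-areas are the limits of the corresponding $g_n$-areas. Since the area ratio $\varepsilon_n/\beta_n=x_n\to x_*$ and no area collapses (as $x_*\in(0,\infty)$), the limit class is bilaterally symmetric with area ratio exactly $x_*$. A bilaterally symmetric K\"ahler class is determined up to scale by this ratio, so pulling $g_\infty$ back under a biholomorphism $(M,\tilde J)\to(M,J)$ produces an extremal K\"ahler metric in $[\omega]_{x_*}$. Therefore $x_*\in I$, the set $I$ is closed, and the theorem follows. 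Applying this with $x=x_0$ from Lemma \ref{magna} (which lies in $(0,L)$) then feeds directly into Corollary \ref{suite} to yield Theorem \ref{big}.
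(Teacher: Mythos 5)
Your outer skeleton---nonemptiness via \cite{arpasing}, openness via \cite{ls2}, bilateral symmetry of the metrics via \cite{xxgang}, the bound $f\le 8-\delta$ along the sequence, and Proposition \ref{flex} for compactness---coincides with the paper's proof. The genuine gap is in your identification of the limit $(M,\tilde J)$. You assert that the only possible degeneration is the collision of the two blown-up points, producing a $(-2)$-curve in the class $E_1-E_2$; this taxonomy is wrong. A $C^\infty$-limit of complex structures biholomorphic to $J$ can jump to the one-point blow-up of \emph{any} Hirzebruch surface $\mathbb{F}_n$ (already $\CP_1\times\CP_1$ degenerates to $\mathbb{F}_2$, $\mathbb{F}_4$, \dots, and such jumps persist after blowing up a point), and all of these surfaces are diffeomorphic to $\CP_2\# 2\overline{\CP}_2$. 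When the blown-up point lies on the negative section of $\mathbb{F}_2$, or for $n\ge 3$, the limit contains an irreducible rational curve of self-intersection $\le -3$, and the class of a $(-3)$-curve (in a marking where $c_1=3H-E_1-E_2$, necessarily $-H+2E_1$ or $-H+2E_2$) is \emph{not} anti-invariant under the bilateral involution, so your ``an effective class and its negative cannot both be effective'' argument says nothing about it. These degenerations can in fact be excluded, but by a different mechanism: one checks that every class $D$ with $D^2=-m\le -2$ and $c_1\cdot D=2-m$ pairs non-positively with the bilaterally symmetric class $[\omega]_{x_*}$, contradicting effectivity. Even to state this one must first show that, after passing to a subsequence, the diffeomorphisms $\Phi_j$ act on $H^2(M,\ZZ)$ by a single fixed lattice isometry, so that the limit's canonical class, K\"ahler class, and involution are simultaneously standard in one marking; you ignore this twisting, and your appeal to the specific class $E_1-E_2$ tacitly presumes it has been handled.

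Two further steps are asserted rather than proved: that the $\ZZ_2$-symmetry survives to a holomorphic isometry of $(M,\tilde J,g_\infty)$, and that the curves $E$ and $F_1-E$ ``persist under the smooth convergence'' together with their areas. Neither is automatic; supplying exactly this sort of persistence is the technical heart of the paper's own argument, which proceeds differently: it proves that the \emph{torus} action survives the limit (by tracking a fixed point, a diagonalizing splitting of its tangent space, and conjugation with the exponential map), then realizes the limits of $F_1$, $F_2$, $E$ as totally geodesic holomorphic spheres---fixed-point sets of circle subgroups---with the correct intersection numbers and converging areas; blowing down the limit of $E$ and citing surface classification then identifies $(M,\tilde J)$ with the one-point blow-up of $\CP_1\times\CP_1$ and pins down the K\"ahler class in one stroke, with no taxonomy of degenerations needed. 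To repair your route you would need (i) an Arzel\`a--Ascoli argument making the conjugated involutions $\Phi_j^{-1}\circ\sigma\circ\Phi_j$ converge to a nontrivial holomorphic isometric involution of the limit, (ii) the cohomological stabilization described above, and (iii) the full lattice estimate in place of the anti-invariance trick.
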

\begin{proof}
Consider the subset ${\mathcal X}$ of the interval $(0,L)$ consisting of those 
$x$ for which 
$[\omega]_x$ contains an extremal K\"ahler metric. Then  
${\mathcal X}$ is non-empty \cite{arpasing} and open \cite{ls2}. Since
$(0,L)$ is connected, it therefore suffices to show that ${\mathcal X}$ is also closed. 

To this end, consider a sequence $x_i\in {\mathcal X}$ which converges to some 
$x\in (0,L)$. Consider the corresponding extremal K\"ahler metrics
$g_i$, which have volume $[\omega ]_{x_i}^2/2 = 1 + 2x_i+x_i^2/2 \to
1 + 2x+x^2/2 = [\omega ]_x^2/2$. By rescaling these to unit volume, 
applying Proposition 
 \ref{flex}, and then rescaling back, there 
must exist a subsequence $g_{i_j}$ of the $g_i$  and   a 
sequence of diffeomorphisms $\Phi_j$ such that the pull-backs
$\Phi_j^*g_{i_j}$ smoothly converge to  
 a metric $g$ on $M$. Now recall that each of the metrics $g_i$ is 
toric, for a fixed action of the 2-torus on $M$. Choose a fixed point $p$ of this action, and 
choose an decomposition of the tangent space $T_p$ into a direct sum 
$L_1\oplus L_2$ into two complex lines 
which diagonalize the induced action of 
consider its images $\Phi_j(p)$ under these diffeomorphisms. Since $M$ is 
compact, we may assume that these points converge to a point $\hat{p}$ in $M$;
similarly, by again passing to a subsequence, we may also assume that the  
images of the orthogonal subspaces $L_1, L_2 \subset T_pM$ converge to 
give an orthogonal decomposition of $T_pM$. Once this is done, we then 
obtain a limit isometric action of the $2$-torus on $(M,g)$ by 
pushing forward the  corresponding  rotations of $T_pM$ and  conjugating with 
the exponential map of $g$. Since the push-forwards $\Phi_{j*}J$ converge to 
a complex structure $\tilde{J}$ which is parallel with respect to $g$, we moreover
conclude that this limit torus action is holomorphic with respect to $\tilde{J}$.

Now each of the holomorphic curves $F_1$, $F_2$ and $E$ in 
$\CP_2\# 2\overline{\CP}_2$ is the fixed point set of the isometric
action of some circle in the $2$-torus, so each is totally geodesic
with respect to the $g_i$. By looking at the corresponding fixed point
sets of the limiting action of circle subgroups, we can therefore find totally 
geodesic $2$-spheres in $(M,g)$  which are the limits of the images of these
submanifolds. These limit $2$-spheres are moreover holomorphic 
curves with respect to $\tilde{J}$, and have the same homological 
intersection numbers as the original curves $F_1$, $F_2$ and $E$.  
By blowing down the image of $E$ and applying surface classification, 
we thus conclude that  $(M, \tilde{J})$ is biholomorphic
to the blow-up of $\CP_1\times \CP_1$ at a point. Moreover, since the 
areas of these totally geodesic $2$-spheres are the limits of the
areas of the corresponding $\CP_1$'s with respect to the $g_{i_j}$,
the K\"ahler class of $g$ on $(M,\tilde{J})$ must be the limit of
the $[\omega ]_{i_j}$. Thus there exists a diffeomorphism 
$\Phi: M\to M$ such that $\Phi^*\tilde{J}= J$, and such that 
$\Phi^*g$ becomes an extremal K\"ahler metric with K\"ahler class
$[\omega ]_x$. This shows that $x\in {\mathcal X}$. Thus ${\mathcal X}$ is closed,
and the result follows. 
\end{proof}

Now Lemma \ref{magna} tells us that $x_0\in (0,L)$. It therefore follows that 
$$[\omega ]_{x_0}= (1+x_0)(F_1+F_2)- x_0 E$$
 is the K\"ahler class of an extremal K\"ahler metric $g$. 
However,  Corollary \ref{suite} then tells us that the conformally related
metric $h=s^{-2}g$ is Einstein, and defined on all of $M$. We have
therefore proved the existence of an Einstein metric on 
$\CP_2\# 2\overline{\CP}_2$ which is conformally K\"ahler, and therefore Hermitian, 
precisely as 
claimed by  
 Theorem \ref{big}. 
 
 \section{Concluding Remarks} 
 
 While we have proved the existence of the Einstein metrics promised 
 by Theorem \ref{big} and its corollaries, we have not proved that
such metrics are  necessarily {\em unique} up to rescaling. On the other hand, 
 in light  of \cite{xxgang}, this would follow \cite{spccs} if 
 the critical points of ${\mathcal A}$ could simply be shown to form a unique ray in the
 K\"ahler cone. Extensive  computer calculations by
 Gideon Maschler \cite{gideon} provide  overwhelming evidence 
 to this effect, and  could arguably be called a  ``computer-assisted proof''
 of this assertion. Nonetheless, it might be wiser to simply treat Maschler's 
 calculations as strong circumstantial evidence, rather
 than  as a definitive proof.  
 In any case, a conceptual proof more accessible  to the human mind would be 
prerequisite to any claim that we really understand
this phenomenon. 
 One valiant  attempt in this regard 
 was made by Simanca and Stelling \cite{simstelling}, who calculated the 
 Hessian of the functional and concluded that all critical points must be local minima;
but, contrary to what is tacitly assumed  in their  paper, this  alone does not
logically  suffice to show 
  that the critical ray  is actually  unique. We would therefore like to draw 
 attention to this important gap in our knowledge,  in the hope  that some 
 interested reader will be inspired to provide a definitive solution to the uniqueness
 problem.

 Of course, the results of  this paper also prove the existence of 
 extremal K\"ahler metrics in a whole range of bilaterally symmetric
 K\"ahler classes on $\CP_2 \# 2\overline{\CP}_2$ other than the `target' class $x=x_0$ 
 used to 
 construct our conformally K\"ahler Einstein metric. For example,
 it is not difficult to show that $x=1$  actually lies in the interval $(0,L)$
 of Proposition 
 \ref{freeze}, and this
 implies that  
 the first Chern class $c_1(M)$  is actually the K\"ahler class of an extremal K\"ahler metric. 
 In fact, in light of Theorem \ref{leap}, it seems plausible to us that {\em every} 
 bilaterally symmetric K\"ahler class might  be represented by 
 such a metric, but  one would certainly need to consider many more
 possible  bubbling modes  as  $x\to \infty$. Nonetheless, since the results
 of Arezzo-Pacard-Singer \cite{arpasing} do imply the existence of such metrics
 for all sufficiently  large $x$,  such a  conjecture  might seem quite tempting. 
Of course, it would also be highly desirable to understand existence  for
K\"ahler classes which do not satisfy our  convenient but  
somewhat  arbitrary  condition of  bilateral symmetry. However,
it is not hard to check  that large regions of the K\"ahler cone of   $\CP_2 \# 2\overline{\CP}_2$ 
actually lie outside   the controlled cone of \S \ref{sobstory}, so  our method of controlling 
Sobolev constants, leading to Theorem \ref{leap}, actually exploited the imposition of 
 bilateral symmetry in an essential manner. 

The key r\^ole of toric geometry in the present paper may make it 
seem curious that we have not consistently operated in the toric context 
throughout, rather than taking limits which are  only then proved to be 
toric at the price of considerable extra effort.  Since Donaldson \cite{dontor} has outlined
a  beautiful, systematic  program for the study of toric extremal K\"ahler manifolds, 
we certainly wonder if some  steps in our long argument could be simplified
or eliminated altogether through the adoption of a different point of view!

We would like to once again draw the reader's attention
 to the central r\^ole played by ALE scalar-flat K\"ahler
 surfaces in our proof. Although there is a considerable 
 literature \cite{calein,calhk,caldsing,gibhawk,hitpoly,
 rejoyce,kron,lpa,mcp2,lebmero}
concerning the construction of such metrics, it is apparent that  too little is 
still known about their classification outside the hyper-K\"ahler realm  so  thoroughly
mapped out  by 
 Kronheimer \cite{krontor}. 
In general, this problem seems  daunting, but in the toric
case it might  be feasible to prove that the only possibilities 
are the metrics   constructed explicitly by Calderbank and Singer \cite{caldsing}. 
A related problem would be to try to classify toric 
anti-self-dual orbifolds by extending the beautiful paper
of Fujiki \cite{fujiki}. 

Finally, we believe that it would be interesting to 
extend the techniques used in this paper to 
construct  Bach-flat K\"ahler  metrics which are not
globally conformally Einstein. Such metrics
can certainly sometimes exist  when $c_1$ fails to be positive;
for example, the 
study of   extremal K\"ahler metrics on Hirzebruch surfaces  \cite{hwasim} 
reveals   that  the differentiable manifold
$S^2\times S^2$ admits Bach-flat conformal 
structures  corresponding to many different critical values
of the Weyl action ${\mathcal W}$.  It would be certainly be interesting  
to see if  this same phenomenon occurs for many  other  complex surfaces.
We hope   that it may prove possible to use our present methods
 to construct such metrics on certain other  surfaces 
with  $c_1^2 > 0$. In any case, the  convergence of sequences of 
extremal K\"ahler metrics on    complex surfaces seems destined to 
develop into a rich subject of 
broad  interest,  ultimately involving    issues  of 
 algebro-geometric  stability \cite{donaldsonk1,dontor,mabstab,ross,rotho,tianstab} 
that have played no r\^ole at all in our present story. 

\vfill  
 
\noindent {\bf Acknowledgments:}
The first author  gratefully  thanks   S.K.  Donaldson 
for many enlightening discussions  of extremal K\"ahler metrics, and the 
Princeton Mathematics Department for its hospitality
during the writing of this article.  
 The second author would like to thank Michael Taylor
 for some helpful comments regarding regularity issues, and 
 Michael Anderson for some 
 useful pointers   concerning Gromov-Hausdorff convergence.

 \pagebreak

\vfill

{\footnotesize 
\noindent
{\sc Xiuxiong Chen},
{University of Wisconsin-Madison\\
Mathematics Department, 
480 Lincoln Dr, 
Madison WI 53706-1388}\\
{\sc e-mail}: xiu@math.wisc.edu
 \\ 
 \\
{\sc  Claude LeBrun}, State University of New York at Stony Brook\\
{Department of Mathematics, SUNY, 
Stony Brook, NY 11794-3651}\\
{\sc e-mail}: claude@math.sunysb.edu
\\
\\
{\sc Brian Weber},
{University of Wisconsin-Madison\\
Mathematics Department, 
480 Lincoln Dr, 
Madison WI 53706-1388}\\
{\sc e-mail}: weber@math.wisc.edu}

\end{document}